\newtheorem{theorem}{Theorem}[section]
\newtheorem{corollary}[theorem]{Corollary}
\newtheorem{lemma}[theorem]{Lemma}
\newtheorem{remark}[theorem]{Remark}
\newtheorem{assumption}{Assumption}
\numberwithin{equation}{section}
\begin{document}

\title[Inverse scattering for the biharmonic wave equation]{Inverse scattering for the biharmonic wave equation with a random potential}

\author{Peijun Li}
\address{Department of Mathematics, Purdue University, West Lafayette, Indiana 47907, USA}
\email{lipeijun@math.purdue.edu}

\author{Xu Wang}
\address{LSEC, ICMSEC, Academy of Mathematics and Systems Science, Chinese Academy of Sciences, Beijing 100190, China, and School of Mathematical Sciences, University of Chinese Academy of Sciences, Beijing 100049, China}
\email{wangxu@lsec.cc.ac.cn}

\thanks{The first author is supported in part by NSF grants DMS-1912704 and DMS-2208256. The second author is supported by NNSF of China (11971470 and 11871068).}

\subjclass[2010]{35R30, 35R60, 60H15}

\keywords{Inverse scattering, random potential, biharmonic operator, pseudo-differential operator, principal symbol, uniqueness}

\begin{abstract}
We consider the inverse random potential scattering problem for the two- and three-dimensional biharmonic wave equation in lossy media. The potential is assumed to be a microlocally isotropic Gaussian rough field. The main contributions of the work are twofold. First, the unique continuation principle is proved for the fourth order biharmonic wave equation with rough potentials and the well-posedness of the direct scattering problem is established in the distribution sense. Second, the correlation strength of the random potential is shown to be uniquely determined by the high frequency limit of the second moment of the scattered field averaged over the frequency band. Moreover, we demonstrate that the expectation in the data can be removed and the data of a single realization is sufficient for the uniqueness of the inverse problem with probability one when the medium is lossless. 
\end{abstract}

\maketitle

\section{Introduction}

Scattering problems arise from the interaction between waves and media. They play a fundamental role in many scientific areas such as medical imaging, exploration geophysics, and remote sensing. Driven by significant applications, scattering problems have been extensively studied by many researchers, especially for acoustic and electromagnetic waves \cite{CK13, N00}. Recently, scattering problems for biharmonic waves have attracted much attention due to important applications in thin plate elasticity, which include offshore runway design \cite{WUW2004}, seismic cloaks \cite{FGE09, SWW12}, and platonic crystals \cite{MMM09}. Compared with the second order acoustic and electromagnetic wave equations, many direct and inverse scattering problems remain  unsolved for the fourth order biharmonic wave equation \cite{GGW10, S00}.

In this paper, we consider the biharmonic wave equation with a random potential
\begin{equation}\label{eq:model}
\Delta^2u-(k^2+{\rm i}\sigma k)u+\rho u=-\delta_y\quad \text{in}~\mathbb R^d,
\end{equation}
where $d=2$ or $3$, $k>0$ is the wavenumber, $\sigma\ge0$ is the damping coefficient, and $\delta_y(x):=\delta(x-y)$ denotes the point source located at $y\in\mathbb R^d$ with $\delta$ being the Dirac delta distribution. The term $\rho u$ describes physically an external linear load added to the system and represents a multiplicative noise from the point of view of stochastic partial differential equations. Denote by $\kappa=\kappa(k)$ the complex-valued wavenumber which is given by  
\begin{align*}\label{eq:kappa}
\kappa^4=k^2+{\rm i}\sigma k.
\end{align*}
Let $\kappa_{\rm r}:=\Re(\kappa)>0$ and $\kappa_{\rm i}:=\Im(\kappa)\ge0$, where $\Re(\cdot)$ and $\Im(\cdot)$ denote the real and imaginary parts of a complex number, respectively. As an outgoing wave condition for the fourth order equation, the Sommerfeld radiation condition is imposed to both the wave field $u$ and its Laplacian $\Delta u$: 
\begin{equation}\label{eq:radiation}
\lim_{r\to\infty}r^{\frac{d-1}2}\left(\partial_r u-{\rm i}\kappa u\right)=0,\quad \lim_{r\to\infty}r^{\frac{d-1}2}\left(
\partial_r\Delta u-{\rm i}\kappa\Delta u\right)=0,\quad r=|x|.
\end{equation}

The potential $\rho$ is assumed to be a Gaussian random field defined in a complete probability space $(\Omega,\mathcal F,\mathbb P)$, where $\mathbb P$ is the probability measure. More precisely, $\rho$ is required to satisfy the following assumption (cf. \cite{LPS08}).

\begin{assumption}\label{as:rho}
Let the potential $\rho$ be a real-valued centered microlocally isotropic Gaussian random field of order $m\in(d-1,d]$ in a bounded domain $D\subset\mathbb R^d$, i.e., the covariance operator $Q_\rho$ of $\rho$ is a classical pseudo-differential operator with the principal symbol $\mu(x)|\xi|^{-m}$, where $\mu$ is called the correlation strength of $\rho$ and satisfies $\mu\in C_0^\infty(D), \mu\ge0$.
\end{assumption}

Apparently, the regularity of the microlocally isotropic Gaussian random potential depends on the order $m$. It has been proved in \cite[Lemma 2.6]{LW} that the potential is relatively regular and satisfies $\rho\in C^{0,\alpha}(D)$ with $\alpha\in(0,\frac{m-d}2)$ if $m\in(d,d+2)$; the potential is rough and satisfies $\rho\in W^{\frac{m-d}2-\epsilon,p}(D)$ with $\epsilon>0$ and $p>1$ if $m\le d$. This work focuses on the rough case, i.e., $m\leq d$. Given the rough potential $\rho$, the direct scattering problem is to study the well-posedness and examine the regularity of the solution to \eqref{eq:model}--\eqref{eq:radiation}; the inverse scattering problem is to determine the correlation strength $\mu$ of the random potential $\rho$ from some statistics of the wave field $u$ satisfying \eqref{eq:model}--\eqref{eq:radiation}. Both the direct and inverse scattering problems are challenging. The unique continuation principle 
is crucial for the well-posedenss of the direct scattering problem. But it is nontrivial for the biharmonic wave equation with a rough potential. Moreover, the inverse scattering problem is nonlinear. 

The inverse random potential scattering problems were considered in \cite{CHL19,LPS08,LLW,LLW2,LLM21} for the second order wave equations, where random potentials are assumed to be microlocally isotropic Gaussian fields satisfying Assumption \ref{as:rho} with different conditions on the order $m$. For the Schr\"odinger equation, the unique continuation principle was extended in \cite{LPS08} from the integrable potential $\rho\in L^p(D)$ with $p\in(1,\infty]$ (cf. \cite{P04,KRS87,JK85}) to the rough potential $\rho\in W^{-\epsilon,p}(D)$, i.e., $m=d$. The uniqueness was also established for the two-dimensional inverse problem with $m\in[d,d+1)$. It was shown that the strength $\mu$ of the random potential $\rho$ can be uniquely determined by a single realization of the near-field data almost surely. The corresponding three-dimensional inverse problem with $m=d$ was studied in \cite{CHL19} by using the far-field pattern of the scattered field. In \cite{LLM21}, the authors considered a generalized setting for the three-dimensional Schr\"odinger equation, where both the potential and source are random. The uniqueness was obtained to determine the strength of the potential and source simultaneously based on far-field patterns. Recently, the unique continuation principle was proved in \cite{LW21} for the second order elliptic operators with rougher potentials or medium parameters of order $m\in(d-1,d]$. In \cite{LLW}, the rough model was taken to study the inverse random potential problem for the two-dimensional elastic wave equation. It was shown that the correlation strength of the random potential is uniquely determined by the near-field data under the assumption $m\in(d-\frac13,d]$. For the three-dimensional elastic wave equation, due to the lack of decay property of the fundamental solution with respect to the frequency, the far-field data was utilized in \cite{LLW2} to uniquely determine the strength of the random potential under the condition $m\in(d-\frac15,d]$.

In the deterministic setting, the unique continuation principle was investigated in \cite{B55} and \cite{P58} for the general higher order linear elliptic operators with a weak vanishing assumption and for the biharmonic operator with a nonlinear coefficient satisfying a Lipschitz-type condition, respectively. In \cite{KLU14}, the authors studied the inverse boundary value problem of determining a first order perturbation for the polyharmonic operator $(-\Delta)^n,n\ge2$ by using the Cauchy data. It was shown in \cite{KLU12} that the first order perturbation of the biharmonic operator in a bounded domain can be uniquely determined from the knowledge of the Dirichlet-to-Neumann map given on a part of the boundary. We refer to \cite{TH17,TS18,Y14,I88} and references therein for related direct and inverse scattering problems of the biharmonic operators with regular potentials. To the best of our knowledge, the unique continuation principle is not available for the biharmonic wave equation with rough potentials. 

This paper is concerned with the direct and inverse random potential scattering problems for the two- and three-dimensional biharmonic wave equation. The work contains two main contributions. First, the unique continuation principle is proved for the biharmonic wave equation with a rough potential and the well-posedness is established in the distribution sense for the direct scattering problem. Second, the uniqueness is achieved for the inverse scattering problem. In particular, we show that the correlation strength of the random potential is uniquely determined by the high frequency limit of the second moment of the scattered field averaged over the frequency band. When the medium is lossless, i.e., the damping coefficient $\sigma=0$, we demonstrate that the expectation in the data can be removed and the data of a single realization suffices for the uniqueness of the inverse problem with probability one.

As pointed out, the configurations are different for the inverse scattering problems of the second order wave equations in two and three dimensions. The two-dimensional problems make use of the point source illumination and near-field data, while the three-dimensional problems have to adopt the plane wave incidence and far-field pattern. Due to the high regularity of the fundamental solution to the biharmonic wave equation, the inverse scattering problems can be handled in a unified approach in both the two and three dimensions by employing the same configuration. This paper focuses on the former, i.e., the point source illumination and near-field data. The latter can be considered similarly and is left for a future work. Furthermore, the additional restriction on the order $m$, which was considered in \cite{LLW,LLW2}, can be withdrawn for the biharmonic wave equation. It is worth mentioning that the range of the order $m$ imposed for the inverse scattering problem is optimal in the sense that it coincides with the range of $m$ required in the unique continuation principle to ensure the well-posedness of the direct scattering problem. Our main result for the inverse scattering problem is summarized as follows.

\begin{theorem}\label{tm:main}
Let $\rho$ be a random potential satisfying Assumption \ref{as:rho} and $U\subset\mathbb R^d$ be a bounded and convex domain having a positive distance to the support $D$ of $\rho$. For any $x\in U$, the scattered field $u^s$ satisfies
\begin{align}\label{eq:main}
\lim_{K\to\infty}\frac1K\int_1^K\kappa_{\rm r}^{m+14-2d}\mathbb E|u^s(x,k)|^2d\kappa_{\rm r}=\frac{1}{8^4\pi^{4(d-2)}}\int_D\frac1{|x-z|^{2(d-1)}}\mu(z)dz=:T_d(x).
\end{align}
In addition, if the medium is lossless, i.e., $\sigma=0$, then it holds 
\begin{align}\label{eq:main2}
\lim_{K\to\infty}\frac1{2K}\int_1^{K^2}k^{\frac{m+13}2-d}|u^s(x,k)|^2dk=T_d(x)\quad\mathbb P\text{-}a.s.
\end{align}
Moreover, the strength $\mu$ of the random potential $\rho$ can be uniquely determined by $\{T_d(x)\}_{x\in U}$.
\end{theorem}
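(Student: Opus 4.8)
The plan is to derive an explicit representation of the scattered field via the fundamental solution of the biharmonic operator, extract its leading asymptotic behavior in the wavenumber, and then identify the high‑frequency limit of the second moment with a pseudo‑differential trace that reveals the principal symbol of $Q_\rho$. First I would recall that the total field solving \eqref{eq:model}–\eqref{eq:radiation} decomposes as $u=u^i+u^s$, where $u^i=\Phi_\kappa(\cdot,y)$ is the incident field (the outgoing fundamental solution of $\Delta^2-\kappa^4$) and the scattered field satisfies the Lippmann–Schwinger equation $u^s=-\mathcal{R}_\kappa(\rho u)$ with $\mathcal{R}_\kappa$ the resolvent-type operator with kernel $\Phi_\kappa$. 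Using the well-posedness from the direct scattering part, together with the known regularity $\rho\in W^{(m-d)/2-\epsilon,p}(D)$, one writes the Born expansion $u^s = -\mathcal{R}_\kappa(\rho u^i) - \mathcal{R}_\kappa(\rho\, u^s)$ and argues that the first Born term dominates: the remainder terms carry extra negative powers of $\kappa_{\rm r}$ because each application of $\mathcal{R}_\kappa$ gains decay in the wavenumber (this is precisely where the high regularity of the biharmonic fundamental solution, relative to the Helmholtz case, is exploited, and where the restriction on $m$ present in \cite{LLW,LLW2} can be dropped).

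Next I would compute $\mathbb{E}|u^s(x,k)|^2$ to leading order. Writing the first Born term as an integral $\int_D \Phi_\kappa(x,z)\rho(z)\Phi_\kappa(z,y)\,dz$, the second moment becomes a double integral against the covariance $\mathbb{E}[\rho(z)\rho(z')]$, i.e. the Schwartz kernel of $Q_\rho$. The key analytic step is a stationary-phase / principal-symbol argument: one inserts the asymptotic expansion of $\Phi_\kappa(x,z)$ for large $\kappa_{\rm r}$ — which behaves, up to lower-order terms and the explicit constant $8^{-?}\pi^{-?}$ appearing in $T_d(x)$, like an oscillatory factor $e^{{\rm i}\kappa_{\rm r}|x-z|}$ times $|x-z|^{-(d-1)/2}$ divided by a power of $\kappa_{\rm r}$ (with an additional exponential damping $e^{-\kappa_{\rm i}|x-z|}$ when $\sigma>0$) — and recognizes that the integral $\iint \overline{\Phi_\kappa(x,z)}\Phi_\kappa(x,z')\, Q_\rho(z,z')\,dz\,dz'$ localizes, as $\kappa_{\rm r}\to\infty$, to the diagonal $z=z'$ and picks out the principal symbol $\mu(z)|\xi|^{-m}$ evaluated along the relevant frequency direction. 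Integrating the symbol over $\xi$ in the appropriate sense produces the stated power $\kappa_{\rm r}^{-(m+14-2d)}$ and, after the frequency average $\frac1K\int_1^K(\cdot)\,d\kappa_{\rm r}$ kills the oscillatory cross terms (those with $z\ne z'$, which contribute oscillations in $\kappa_{\rm r}$ that average to zero by a Riemann–Lebesgue-type estimate), the limit $\int_D |x-z|^{-2(d-1)}\mu(z)\,dz$ with the claimed universal constant. This is the main obstacle: making the diagonal localization rigorous for a \emph{rough} $\mu$ (merely $C_0^\infty$ strength but with a genuinely singular symbol of negative order $m\le d$) requires careful pseudo-differential estimates and uniform control of the remainder in the fundamental solution's expansion, paralleling but extending the arguments of \cite{LLW,LLW2,CHL19}.

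For the almost-sure statement \eqref{eq:main2} when $\sigma=0$, I would apply an ergodicity-in-frequency argument: set $X_k:=k^{(m+13)/2-d}|u^s(x,k)|^2$ and show that its variance, $\mathbb{E}|X_k|^2-(\mathbb{E}X_k)^2$, together with the decorrelation $\mathrm{Cov}(X_k,X_{k'})$ for $|k-k'|$ large, decays fast enough (using Gaussianity of $\rho$, so fourth moments reduce to products of second moments, and the oscillatory decay of the fundamental solution) that a law-of-large-numbers over the frequency band applies — this is the standard device used in \cite{LPS08,CHL19,LLW}. Passing through the substitution $k=\kappa_{\rm r}^2$ (valid since $\sigma=0$ gives $\kappa_{\rm r}=k^{1/2}$) converts \eqref{eq:main} into \eqref{eq:main2} with the factor $\frac1{2K}\int_1^{K^2}$. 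Finally, for uniqueness of $\mu$ from $\{T_d(x)\}_{x\in U}$: the map $\mu\mapsto T_d$ is, up to the constant, the integral operator with kernel $|x-z|^{-2(d-1)}$, which is (a constant multiple of) the Riesz potential of order $2$ in dimension $d$; hence $T_d$ determines $(-\Delta)^{?}\mu$ by applying the corresponding (local, elliptic) differential operator, and since $\mu\in C_0^\infty(D)$ with $D$ at positive distance from $U$, a unique continuation / real-analyticity argument for the elliptic equation recovers $\mu$ on all of $D$ from its values determined on (a neighborhood reached from) $U$. I expect the injectivity of the Riesz-type potential, combined with the support separation between $D$ and $U$, to be the only delicate point here, and it is handled exactly as in the cited second-order works.
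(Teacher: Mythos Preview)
Your overall architecture matches the paper's: Born expansion, leading contribution from $u_1$, ergodicity-in-frequency for the almost-sure statement, and inversion of the Riesz-type potential for uniqueness. However, there is a genuine gap in your treatment of the higher Born terms.

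You write that ``the remainder terms carry extra negative powers of $\kappa_{\rm r}$ because each application of $\mathcal{R}_\kappa$ gains decay in the wavenumber.'' This power-counting argument works for the tail $b(x,k)=\sum_{n\ge 3}u_n$, but it does \emph{not} suffice for the second Born term $u_2$ on the full range $m\in(d-1,d]$. Concretely, the operator bound $\|\mathcal K_k\|_{\mathcal L(H^s)}\lesssim k^{s-3/2}$ with $s>(d-m)/2$ yields, after bookkeeping, $\kappa_{\rm r}^{m+14-2d}|u_2|^2\lesssim \kappa_{\rm r}^{4d-3m-5+\epsilon}$; for $d=3$ this exponent is positive whenever $m<7/3$, so pointwise decay fails in that regime. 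The paper handles $u_2$ by a completely different mechanism: it mollifies $\rho$, writes $u_{2,\varepsilon}$ as an oscillatory integral $\int e^{{\rm i}\kappa_{\rm r}L(z,z')}\,(\cdots)\,dzdz'$ with the triangle phase $L(z,z')=|x-z|+|z-z'|+|z'-x|$, foliates $D\times D$ by level sets of $L$, and recognizes the $\kappa_{\rm r}$-integral as a Fourier transform in the phase variable. Parseval then converts $\int_1^\infty \kappa_{\rm r}^{m+2d+1}\mathbb E|u_2|^2\,d\kappa_{\rm r}$ into an $L^2$ bound on the surface integrals $S_\varepsilon(t)$, whose finiteness (uniformly in $\varepsilon$) is obtained from Isserlis' theorem and H\"older estimates on the covariance kernel $\mathbb E[\rho_\varepsilon(z)\rho_\varepsilon(z')]\sim |z-z'|^{m-d-\epsilon}$. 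The conclusion $\lim_{K\to\infty}\frac1K\int_K^{2K}\kappa_{\rm r}^{m+14-2d}|u_2|^2\,d\kappa_{\rm r}=0$ is thus an \emph{integrability} statement, not a pointwise one; the frequency average is essential, not merely cosmetic. Your proposal conflates $u_2$ with the tail and would not close for $d=3$, $m\in(2,7/3)$.

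A secondary remark: your description of the $u_1$ analysis as a ``stationary-phase/principal-symbol'' localization to the diagonal is morally right but understates the work. The paper proceeds by inserting the explicit large-$\kappa$ expansion of $\Phi$ (Hankel asymptotics in 2D, exact exponentials in 3D), splitting $\mathbb E|u_1|^2$ into a finite sum of double integrals, and showing that all terms containing a factor $e^{-\kappa|x-z|}$ are exponentially small while the purely oscillatory term is evaluated using the covariance kernel asymptotics $\mathbb E[\rho(z)\rho(z')]\sim \mu(z)|z-z'|^{m-d}$. In 2D this also requires controlling the truncation error $\Phi-\Phi_N$ in Sobolev norms. None of this is conceptually different from what you sketch, but the exponential-decay terms coming from the modified-Helmholtz piece of $\Phi$ are a biharmonic-specific feature you do not mention.
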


Hereafter, we use the notation ``$\mathbb P\text{-}a.s.$'' to indicate that the formula holds with probability one.
The notation $a\lesssim b$ stands for $a\le Cb$, where $C$ is a positive constant and may change from line to line in the proofs. 

The rest of the paper is organized as follows. Section \ref{sec:2} introduces the fundamental solution to the biharmonic wave equation. Section \ref{sec:3} presents the unique continuation principle for the biharmonic wave equation with rough potentials. Based on the Lippmann--Schwinger integral equation, the well-posedness for the direct scattering problem is addressed in section \ref{sec:4}. Section \ref{sec:inverse} is devoted the uniqueness of the inverse scattering problem. The paper is concluded with some general remarks in section \ref{sec:con}. 

\section{Preliminaries}
\label{sec:2}

In this section, we introduce the fundamental solution to the two- and three-dimensional biharmonic wave equation and examine some important properties of the integral operators defined by the fundamental solution. 

\subsection{The fundamental solution}

Recalling $\kappa^4=k^2+{\rm i}\sigma k$, we have from a straightforward calculation that 
\begin{align*}
\kappa_{\rm r}&=\Re(\kappa)=\Bigg[\bigg(\frac{k^4+\sigma^2k^2}{16}\bigg)^{\frac14}+\bigg(\frac{\sqrt{k^4+\sigma^2k^2}+k^2}8\bigg)^{\frac12}\Bigg]^{\frac12},\\
\kappa_{\rm i}&=\Im(\kappa)=\Bigg[\bigg(\frac{k^4+\sigma^2k^2}{16}\bigg)^{\frac14}-\bigg(\frac{\sqrt{k^4+\sigma^2k^2}+k^2}8\bigg)^{\frac12}\Bigg]^{\frac12}.
\end{align*}
It is clear to note that 
\begin{align*}
k^{\frac12}\kappa_{\rm i}=k^{\frac12}\left[\frac{\frac{\sqrt{k^4+\sigma^2k^2}-k^2}8}{\big(\frac{k^4+\sigma^2k^2}{16}\big)^{\frac14}+\big(\frac{\sqrt{k^4+\sigma^2k^2}+k^2}8\big)^{\frac12}}\right]^{\frac12}
=\left[\frac{\sqrt{k^4+\sigma^2k^2}-k^2}{8\big(\frac{k^4+\sigma^2k^2}{16k^4}\big)^{\frac14}+8\big(\frac{\sqrt{k^4+\sigma^2k^2}+k^2}{8k^2}\big)^{\frac12}}\right]^{\frac12},
\end{align*}
where
\[
\lim_{k\to\infty}\left(\sqrt{k^4+\sigma^2k^2}-k^2\right)=\lim_{k\to\infty}\frac{\sigma^2k^2}{\sqrt{k^4+\sigma^2k^2}+k^2}=\frac{\sigma^2}2.
\]
Hence we get
\begin{align}\label{eq:kappari}
\lim_{k\to\infty}\frac{\kappa_{\rm r}}{k^{\frac12}}=1,\quad \lim_{k\to\infty}k^{\frac12}\kappa_{\rm i}=\frac{\sigma}4,
\end{align}
which implies for sufficiently large $k$ that the following quantities are equivalent:
\[
|\kappa| \sim\kappa_{\rm r}\sim k^{\frac12}. 
\]

Let $\Phi(x,y,k)$ be the fundamental solution to the biharmonic wave equation, i.e., it satisfies 
\begin{align*}
\Delta^2\Phi(x,y,k)-\kappa^4\Phi(x,y,k)=-\delta(x-y).
\end{align*}
It follows from the identity $\Delta^2-\kappa^4=(\Delta+\kappa^2)(\Delta-\kappa^2)$ that $\Phi$ is a linear combination of the fundamental solutions to the Helmholtz operator  $\Delta+\kappa^2$ and the modified Helmholtz operator $\Delta-\kappa^2$ (cf. \cite{TH17,TS18}): 
\begin{align*}
\Phi(x,y,k)=-\frac{\rm i}{8\kappa^2}\left(\frac{\kappa}{2\pi|x-y|}\right)^{\frac{d-2}2}\left(H_{\frac{d-2}2}^{(1)}(\kappa|x-y|)+\frac{2\rm i}{\pi}K_{\frac{d-2}2}(\kappa|x-y|)\right),
\end{align*}
where $H_{\nu}^{(1)}$ and $K_{\nu}$ are the Hankel function of the first kind and the Macdonald function with order $\nu\in\mathbb R$, respectively. Noting 
\begin{align*}
K_{\nu}(z)=\frac{\pi}2{\rm i}^{\nu+1}H_{\nu}^{(1)}({\rm i}z),\quad -\pi<\arg z\le\frac{\pi}2
\end{align*}
and 
\[
H_{\frac12}^{(1)}(z)=\sqrt{\frac2{\pi z}}\frac{e^{{\rm i}z}}{\rm i}, 
\]
we have 
\begin{equation}\label{eq:Phi}
\Phi(x,y,k)=\left\{
\begin{aligned}
&-\frac{\rm i}{8\kappa^2}\big(H_0^{(1)}(\kappa|x-y|)-H_0^{(1)}({\rm i}\kappa|x-y|)\big),\quad& d=2,\\
&-\frac1{8\pi \kappa^2|x-y|}\big(e^{{\rm i}\kappa|x-y|}-e^{-\kappa|x-y|}\big),\quad& d=3. 
\end{aligned}
\right.
\end{equation}

The following lemma gives the regularity of $\Phi$ and its dependence on the wavenumber $k$. 

\begin{lemma}\label{lm:Phi}
Let $G\subset\mathbb R^d$ be any bounded domain with a locally Lipschitz boundary. For any fixed $y\in\mathbb R^d$, it holds $\Phi(\cdot,y,k)\in W^{\gamma,q}(G)$ for any $\gamma\in[0,1]$ and $q\in(1,\frac2{\gamma})$.
In particular, for any fixed $y\in D$ and $G$ having a positive distance from $D$, it holds for sufficiently large $k$ that
\begin{equation*}
\|\Phi(\cdot,y,k)\|_{W^{\gamma,q}(G)}\lesssim k^{\frac{d-7}4+\frac\gamma2}
\end{equation*}
for any $\gamma\in[0,1]$ and $q>1$.
\end{lemma}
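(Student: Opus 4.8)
The plan is to prove the two assertions separately: the first is a purely local regularity statement near the diagonal $x=y$, while the second is a high-frequency estimate on a region bounded away from $y$.

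For the membership $\Phi(\cdot,y,k)\in W^{\gamma,q}(G)$, note that $\Phi(\cdot,y,k)$ is real-analytic away from $x=y$, so after a smooth cutoff it suffices to understand a neighbourhood of $y$; since $G$ is bounded with locally Lipschitz boundary, no boundary issue arises. Near $x=y$ I would insert the small-argument expansions of $H_0^{(1)}$ and $K_0$ — equivalently, recall that, up to the factor $(2\kappa^2)^{-1}$, $\Phi$ is the difference of the fundamental solutions of $\Delta+\kappa^2$ and $\Delta-\kappa^2$ — and observe that their leading singular parts (the logarithm when $d=2$, the factor $|x-y|^{2-d}$ when $d=3$) cancel. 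What remains near $x=y$ is a smooth term plus a constant multiple of $|x-y|^2\log|x-y|$ if $d=2$ and a constant multiple of $|x-y|$ if $d=3$; both are Lipschitz (indeed $C^1$ when $d=2$) on the bounded domain $G$, hence lie in $W^{\gamma,q}(G)$ for every $\gamma\in[0,1]$ and $q\in(1,2/\gamma)$, which one checks by a direct estimate of the Gagliardo seminorm (or a standard Sobolev embedding).

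For the quantitative bound, the hypothesis ${\rm dist}(G,D)=:\delta>0$ removes the singularity: for $x\in G$ and $y\in D$ one has $|x-y|\ge\delta$, so $\Phi(\cdot,y,k)$ and all its $x$-derivatives are bounded real-analytic functions on $G$, and because $|G|<\infty$ all $L^q(G)$-norms are controlled by the $L^\infty(G)$-norm — this is why the bound holds for every $q>1$ with a constant depending only on $\delta$ (in particular uniformly in $y\in D$). I would first establish the endpoint estimates
\begin{equation*}
\|\Phi(\cdot,y,k)\|_{L^q(G)}\lesssim k^{\frac{d-7}4},\qquad \|\Phi(\cdot,y,k)\|_{W^{1,q}(G)}\lesssim k^{\frac{d-7}4+\frac12},
\end{equation*}
and then interpolate. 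When $d=3$ this is immediate from \eqref{eq:Phi}: $|e^{{\rm i}\kappa|x-y|}|=e^{-\kappa_{\rm i}|x-y|}\le1$ while $|e^{-\kappa|x-y|}|=e^{-\kappa_{\rm r}|x-y|}\le e^{-\delta\kappa_{\rm r}}$ decays faster than any power of $k$ (since $\kappa_{\rm r}\sim k^{1/2}\to\infty$ by \eqref{eq:kappari}), and together with $|\kappa|^{-2}\sim k^{-1}$ this gives the $L^\infty$ bound, each $\partial_{x_j}$ costing one extra factor $|\kappa|\sim k^{1/2}$. When $d=2$ I would use the large-argument asymptotics $H_\nu^{(1)}(z)=(2/\pi z)^{1/2}e^{{\rm i}(z-\nu\pi/2-\pi/4)}\big(1+O(|z|^{-1})\big)$ and $\partial_z H_0^{(1)}(z)=-H_1^{(1)}(z)$, which are valid here because $\kappa^4=k^2+{\rm i}\sigma k$ lies in the closed first quadrant, so $\arg(\kappa|x-y|)$ and $\arg({\rm i}\kappa|x-y|)$ remain in a fixed compact subinterval of $(-\pi,\pi)$; this yields $|H_0^{(1)}(\kappa|x-y|)|\lesssim(|\kappa|\,|x-y|)^{-1/2}$ and $|H_0^{(1)}({\rm i}\kappa|x-y|)|\lesssim(|\kappa|\,|x-y|)^{-1/2}e^{-\delta\kappa_{\rm r}}$ (again negligible), with $x$-derivatives again costing a factor $|\kappa|$. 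Collecting the powers of $|\kappa|\sim k^{1/2}$ and absorbing $\delta$-dependent constants gives the two endpoint estimates, and the real-interpolation identity $W^{\gamma,q}(G)=(L^q(G),W^{1,q}(G))_{\gamma,q}$ on bounded Lipschitz domains, together with $\|f\|_{W^{\gamma,q}(G)}\lesssim\|f\|_{L^q(G)}^{1-\gamma}\|f\|_{W^{1,q}(G)}^{\gamma}$, produces the exponent $\frac{d-7}4+\frac\gamma2$.

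The main obstacle is the local analysis at $x=y$ in the first part: one must make explicit the cancellation of the singular parts of the Helmholtz and modified-Helmholtz fundamental solutions and then verify that the residual singularity is mild enough to belong to $W^{\gamma,q}(G)$ throughout the stated range of $q$. The high-frequency part is comparatively routine once the correct asymptotic expansions are chosen and one checks that the Hankel-function arguments stay in a sector where those expansions hold uniformly; keeping track of the powers of $|\kappa|$ and of the $\delta$-uniformity is then bookkeeping.
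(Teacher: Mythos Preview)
Your proposal is correct, and for the second assertion (the high-frequency bound with ${\rm dist}(G,D)>0$) it is essentially the paper's argument: endpoint estimates at $\gamma=0$ and $\gamma=1$ via large-argument Hankel asymptotics, followed by interpolation. The only cosmetic difference is that the paper first reduces to real-argument Hankel functions via the bounds $|H_\nu^{(1)}(z)|\le e^{-\Im(z)(1-\Theta^2/|z|^2)^{1/2}}|H_\nu^{(1)}(\Theta)|$ before invoking the asymptotics, whereas you apply the asymptotics directly on the complex arguments $\kappa|x-y|$ and ${\rm i}\kappa|x-y|$.

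For the first assertion your route is genuinely different. The paper does \emph{not} exploit the cancellation between the Helmholtz and modified-Helmholtz pieces: it bounds each term in \eqref{eq:Phi} separately using the small-argument behaviour $H_0^{(1)}\sim\log$ and $H_1^{(1)}\sim r^{-1}$, obtaining $\Phi(\cdot,y,k)\in L^p(G)$ for all $p>1$ but $\Phi(\cdot,y,k)\in W^{1,p'}(G)$ only for $p'\in(1,2)$, and then interpolates; this is precisely what produces the restriction $q<2/\gamma$ in the statement. Your cancellation argument instead identifies the local behaviour of $\Phi$ with that of the biharmonic fundamental solution ($r^2\log r$ in 2D, $r$ in 3D), which is Lipschitz and hence lies in $W^{1,q}(G)$ for every $q>1$. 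So your approach is cleaner and in fact yields a strictly stronger conclusion than the lemma claims; the paper's approach, while cruder, is entirely self-contained from the standard Hankel asymptotics and explains where the stated range $q\in(1,2/\gamma)$ comes from.
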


\begin{proof}
Let $r^*:=\sup_{x\in G}|x-y|$ for any $y\in\mathbb R^d$ and $r_0:=\inf_{x\in G}|x-y|>0$ if $y\in D$. We discuss the two- and three-dimensional problems separately. 

First we consider the two-dimensional case, where the fundamental solution takes the form 
$
\Phi(x,y,k)=-\frac{\rm i}{8\kappa^2}(H_0^{(1)}(\kappa|x-y|)+\frac{2\rm i}{\pi}K_0(\kappa|x-y|))
$ for any fixed $y\in\mathbb R^2$.

By \cite[Lemmas 2.1 and 2.2]{CL05}, it holds for any $z\in\mathbb C$ that 
\begin{align}\label{eq:Hv}
&\big|H_\nu^{(1)}(z)\big| \le e^{-\Im(z)\left(1-\frac{\Theta^2}{|z|^2}\right)^\frac12}\big|H_\nu^{(1)}(\Theta)\big|,\\\label{eq:Kv}
&\left|K_\nu(z)\right| \le \frac{\pi}2e^{-\Re(z)\left(1-\frac{\Theta^2}{|z|^2}\right)^\frac12}\big|H_\nu^{(1)}(\Theta)\big|, 
\end{align}
where $\nu\in\mathbb R$ and $\Theta$ is any real number satisfying $0<\Theta\le|z|$. Choosing $z=\kappa|x-y|$ and $\Theta=\Re(z)=\kappa_{\rm r}|x-y|$, we get
\begin{align*}
\int_{G}\left|\Phi(x,y,k)\right|^pdx\lesssim|\kappa|^{-2p}\int_G\big|H_0^{(1)}(\kappa_{\rm r}|x-y|)\big|^pdy
\lesssim\kappa_{\rm r}^{-2p}\int_0^{r^*}\big|H_0^{(1)}(\kappa_{\rm r}r)\big|^prdr.
\end{align*} 
It follows from $H_0^{(1)}(\kappa_{\rm r}r)\sim\frac{2{\rm i}}{\pi}\ln(\kappa_{\rm r}r)$ as $r\to0$ (cf. \cite[Section 9.1.8]{AS92}) and
\[
\int_0^{r^*}|\ln(\kappa_{\rm r}r)|^prdr=\kappa_{\rm r}^{-2}\int_0^{\kappa_{\rm r}r^*}|\ln(r)|^prdr\lesssim\kappa_{\rm r}^{2p\epsilon}\quad\forall\, p>1, \epsilon>0
\]
that the following estimate holds:
\begin{align*}
\|\Phi(\cdot,y,k)\|_{L^{p}(G)}\lesssim\kappa_{\rm r}^{-2+2\epsilon}<\infty\quad\forall\, p>1, \epsilon>0.
\end{align*}

Noting 
\begin{align*}
&\partial_{x_i}H_0^{(1)}(\kappa|x-y|)=\kappa H_0^{(1)'}(\kappa|x-y|)\frac{x_i-y_i}{|x-y|}=-\kappa H_1^{(1)}(\kappa|x-y|)\frac{x_i-y_i}{|x-y|},\\
&\partial_{x_i}K_0(\kappa|x-y|)=\frac{{\rm i}\pi}2\partial_{x_i}H_0^{(1)}({\rm i}\kappa|x-y|)=-{\rm i}\kappa K_1(\kappa|x-y|)\frac{x_i-y_i}{|x-y|}
\end{align*}
for $i=1, 2$ and using  $H_1^{(1)}(\kappa_{\rm r}r)\sim\frac{2{\rm i}}\pi\frac1{\kappa_{\rm r}r}$ as $r\to0$ (cf. \cite[Section 9.1.9]{AS92}), we obtain 
\begin{align*}
\int_G\left|\partial_{x_i}\Phi(x,y,k)\right|^{p'}dx &\lesssim
|\kappa|^{-p'}\int_G\left|H_1^{(1)}(\kappa_{\rm r}|x-y|)\right|^{p'}dx
\lesssim\kappa_{\rm r}^{-p'}\int_0^{r^*}\left|H_1^{(1)}(\kappa_{\rm r}r)\right|^{p'}rdr\\
&\lesssim\kappa_{\rm r}^{-p'}\int_0^{r^*}\frac1{(\kappa_{\rm r}r)^{p'}}rdr\lesssim\kappa_{\rm r}^{-2p'}\quad\forall\,p'\in(1,2),
\end{align*}
which shows 
\begin{align*}
\|\Phi(\cdot,y,k)\|_{W^{1,p'}(G)}\lesssim \kappa_{\rm r}^{-2}<\infty\quad\forall\, p'\in(1,2)
\end{align*}
and $\Phi(\cdot,y,k)\in W^{1,p'}(G)$.

The interpolation $[L^p(G),W^{1,p'}(G)]_{\gamma}=W^{\gamma,q}(G)$ with $\gamma\in[0,1]$ and $q$ satisfying $\frac1q=\frac{1-\gamma}p+\frac{\gamma}{p'}$ (cf. \cite[Theorem 6.4.5]{BL76}) yields $\Phi(\cdot,y,k)\in W^{\gamma,q}(G)$ for any $\gamma\in[0,1]$ and $q\in(1,\frac2{\gamma})$.

In particular, if $y\in D$ and $k$ is sufficiently large, then $r_0:=\inf_{x\in G}|x-y|>0$ and the Hankel function has the following asymptotic expansion (cf. \cite[Section 9.2.3]{AS92}): 
\[
H_{\nu}^{(1)}(\kappa_{\rm r}|x-y|)\sim\left(\frac2{\pi\kappa_{\rm r}|x-y|}\right)^{\frac12}e^{{\rm i}(\kappa_{\rm r}|x-y|-\frac12\nu\pi-\frac14\pi)}
\] 
for $\nu\in\mathbb R$. Following from the interpolation between $L^q(G)$ and $W^{1,q}(G)$ provided that $G$ is bounded with a locally Lipschitz boundary (cf. \cite[Section 9.69]{AF03}), we have 
\begin{align*}
\int_{G}\left|\Phi(x,y,k)\right|^qdx\lesssim|\kappa|^{-2q}\int_G\big|H_0^{(1)}(\kappa_{\rm r}|x-y|)\big|^qdy
\lesssim\kappa_{\rm r}^{-2q}\int_{r_0}^{r^*}\frac1{(\kappa_rr)^{\frac q2}}rdr\lesssim\kappa_{\rm r}^{-\frac52q},
\end{align*} 
\begin{align*}
\int_G\left|\partial_{x_i}\Phi(x,y,k)\right|^{q}dx\lesssim
|\kappa|^{-q}\int_G\big|H_1^{(1)}(\kappa_{\rm r}|x-y|)\big|^{q}dx
\lesssim\kappa_{\rm r}^{-q}\int_{r_0}^{r^*}\frac1{(\kappa_{\rm r}r)^{\frac q2}}rdr\lesssim\kappa_{\rm r}^{-\frac32q},
\end{align*}
which leads to
\begin{align}\label{eq:Phiesti}
\|\Phi(\cdot,y,k)\|_{W^{\gamma,q}(G)}\lesssim \kappa_{\rm r}^{-\frac52+\gamma}\lesssim k^{-\frac54+\frac\gamma2}
\end{align}
for any $\gamma\in[0,1]$ and $q>1$. 

Next we examine the three-dimensional problem, where $\Phi(x,y,k)=-\frac1{8\pi \kappa^2|x-y|}\left(e^{{\rm i}\kappa|x-y|}-e^{-\kappa|x-y|}\right)$. The estimates are similar to the two-dimensional case.

For any $y\in\mathbb R^3$, it holds
\begin{align*}
\|\Phi(\cdot,y,k)\|_{L^q(G)}\lesssim|\kappa|^{-2}\bigg(\int_0^{r^*}\frac{|e^{{\rm i}\kappa r}-e^{-\kappa r}|^q}{r^q}r^2dr\bigg)^{\frac1q}\lesssim|\kappa|^{-2}\bigg(\int_0^{r^*}\frac{|\kappa r|^q}{r^q}r^2dr\bigg)^{\frac1q}<\infty\quad\forall~q>1.
\end{align*}
The derivatives of $\Phi$ satisfy
\begin{align*}
\int_G|\partial_{x_i}\Phi(x,y,k)|^{q}dx &=\int_G\left|\frac{x_i-y_i}{8\pi\kappa^2|x-y|^3}\left[e^{{\rm i}\kappa|x-y|}({\rm i}\kappa|x-y|-1)+e^{-\kappa|x-y|}(\kappa|x-y|+1)\right]\right|^{q}dx\\
&\lesssim|\kappa|^{-2q}\int_0^{r^*}\frac{|e^{{\rm i}\kappa r}({\rm i}\kappa r-1)+e^{-\kappa r}(\kappa r+1)|^{q}}{r^{2{q}}}r^2dr
<\infty\quad\forall~q>1,
\end{align*}
which implies $\Phi(\cdot,y,k)\in W^{\gamma,q}(G)$ for any $\gamma\in[0,1]$ and $q>1$.  

In particular, for $y\in D$, a straightforward calculation gives 
\[
\|\Phi(\cdot,y,k)\|_{L^q(G)}\lesssim|\kappa|^{-2}\bigg(\int_{r_0}^{r^*}\frac{|e^{{\rm i}\kappa r}-e^{-\kappa r}|^q}{r^q}r^2dr\bigg)^{\frac1q}\lesssim|\kappa|^{-2}\bigg(\int_{r_0}^{r^*}r^{2-q}dr\bigg)^{\frac1q}\lesssim |\kappa|^{-2},
\]
\[
\int_G|\partial_{x_i}\Phi(x,y,k)|^{q}dx=|\kappa|^{-2q}\int_{r_0}^{r^*}\frac{|\kappa r|^{q}+1}{r^{2q}}r^2dr\lesssim |\kappa|^{-q},
\]
Hence, for sufficiently large $k$, it holds
\[
\|\Phi(\cdot,y,k)\|_{W^{\gamma,q}(G)}\lesssim |\kappa|^{-2+\gamma}\lesssim k^{-1+\frac\gamma2}
\] 
for any $\gamma\in[0,1]$ and $q>1$.
\end{proof}

\subsection{Integral operators}

Define the integral operators 
\begin{align*}
\mathcal H_k(\phi)(\cdot):&=\int_{\mathbb R^d}\Phi(\cdot,z,k)\phi(z)dz,\\
\mathcal K_k(\phi)(\cdot):&=\mathcal H_k(\rho\phi)(\cdot)=\int_{\mathbb R^d}\Phi(\cdot,z,k)\rho(z)\phi(z)dz,
\end{align*}
where $\Phi$ is the fundamental solution given in \eqref{eq:Phi} and $\rho$ is the random potential satisfying Assumption 1. 

\begin{lemma}\label{lm:operatorH}
Let $B$ and $G$ be two bounded domains in $\mathbb R^d$, and $G$ has a locally Lipschitz boundary.
\begin{itemize}
\item[(i)] The operator $\mathcal H_k: H^{-s_1}(B)\to H^{s_2}(G)$ is bounded and satisfies
\[
\|\mathcal H_k\|_{\mathcal{L}(H^{-s_1}(B),H^{s_2}(G))}\lesssim k^{\frac{s-3}2}
\]
for $s:=s_1+s_2\in(0,3)$ with $s_1,s_2\ge0$. 
\item[(ii)] The operator $\mathcal H_k: H^{-s}(B)\to L^{\infty}(G)$ is bounded and satisfies
\[
\|\mathcal H_k\|_{\mathcal L(H^{-s}(B),L^\infty(G))}\lesssim k^{\frac{2s+d-6+\epsilon}4}
\]
for any $s\in(0,3)$ and $\epsilon>0$.
\item[(iii)] The operator $\mathcal H_k:W^{-\gamma,p}(B)\to W^{\gamma,q}(G)$ is compact for any $q>1$, $0<\gamma<\min\{\frac32,\frac32+(\frac1q-\frac12)d\}$, and $p>1$ with $p$ and $q$ satisfying $\frac1p+\frac1q=1$.
\end{itemize}
\end{lemma}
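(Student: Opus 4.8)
The plan is to use that $\mathcal H_k$ is precisely the outgoing resolvent $(\Delta^2-\kappa^4)^{-1}$ of the biharmonic operator, which is exactly the equation defining $\Phi$. Factoring $\Delta^2-\kappa^4=(\Delta+\kappa^2)(\Delta-\kappa^2)$ and performing the partial fraction decomposition of the corresponding Fourier symbol,
\[
\frac{1}{\kappa^4-|\xi|^4}=\frac{1}{2\kappa^2}\Big(\frac{1}{\kappa^2-|\xi|^2}+\frac{1}{\kappa^2+|\xi|^2}\Big),
\]
one obtains (compare \eqref{eq:Phi}) that $\mathcal H_k=\tfrac{1}{2\kappa^2}(\widetilde{\mathcal R}_\kappa-\mathcal R_\kappa)$, where $\mathcal R_\kappa\phi=\Phi_{\rm H}*\phi$ and $\widetilde{\mathcal R}_\kappa\phi=\Phi_{\rm mH}*\phi$ are convolution against the fundamental solutions of the Helmholtz operator $\Delta+\kappa^2$ (outgoing/limiting-absorption branch) and of the modified Helmholtz operator $\Delta-\kappa^2$. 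All three assertions then reduce to mapping properties of these two operators on bounded domains, together with the equivalences $|\kappa|\sim\kappa_{\rm r}\sim k^{1/2}$ from \eqref{eq:kappari}.

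For part (i), I would decompose $\mathcal H_k=\mathcal H_k^{\rm sh}+\mathcal H_k^{\rm rest}$ by frequency, with $\mathcal H_k^{\rm sh}$ localized to the characteristic shell $|\xi|\sim\kappa_{\rm r}$ and $\mathcal H_k^{\rm rest}$ to its complement. On the complement one has $|\kappa^4-|\xi|^4|\gtrsim\kappa_{\rm r}^4+|\xi|^4$ for large $k$, so $\langle\xi\rangle^{s}|\kappa^4-|\xi|^4|^{-1}\lesssim\kappa_{\rm r}^{\,s-4}$ when $s\in(0,3)$ (using $\langle\xi\rangle\lesssim\kappa_{\rm r}$ on the low part and the monotonicity of $|\xi|^{s-4}$ on the high part); since $\langle\xi\rangle^{s}$ times this symbol is a bounded Fourier multiplier, $\|\mathcal H_k^{\rm rest}\|_{\mathcal L(H^{-s_1}(B),H^{s_2}(G))}\lesssim\kappa_{\rm r}^{\,s-4}$. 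On the shell the modified Helmholtz contribution is smooth, so $\mathcal H_k^{\rm sh}$ equals, up to lower order, $\tfrac{1}{2\kappa^2}$ times the Helmholtz resolvent $\mathcal R_\kappa$ localized near its characteristic sphere, and there the classical local resolvent (limiting absorption) estimate gives $\|\mathcal R_\kappa\|_{\mathcal L(L^2(B),L^2(G))}\lesssim\kappa_{\rm r}^{-1}$, uniformly in the damping $\sigma\ge0$ (it is the standard bound at $\sigma=0$, and $\sigma>0$ is no worse). Since $\langle\xi\rangle\sim\kappa_{\rm r}$ on the shell, inserting the Fourier multipliers $\langle\xi\rangle^{s_1}$ and $\langle\xi\rangle^{s_2}$ costs only the scalar factor $\kappa_{\rm r}^{\,s_1+s_2}$, the commutators with the (compactly supported) spatial cut-offs being of lower order; hence $\|\mathcal H_k^{\rm sh}\|_{\mathcal L(H^{-s_1}(B),H^{s_2}(G))}\lesssim\kappa_{\rm r}^{-2}\cdot\kappa_{\rm r}^{\,s}\cdot\kappa_{\rm r}^{-1}=\kappa_{\rm r}^{\,s-3}$. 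Adding the two pieces and using $\kappa_{\rm r}\sim k^{1/2}$ yields $\|\mathcal H_k\|_{\mathcal L(H^{-s_1}(B),H^{s_2}(G))}\lesssim k^{(s-3)/2}$. The main obstacle is precisely the shell estimate: one needs the local resolvent bound with the sharp power of $\kappa_{\rm r}$ that is uniform as $\sigma\to0$, and one must control the interaction of the nonlocal operators $\langle\xi\rangle^{s_i}$ with the restriction to bounded domains so that only lower-order remainders survive; the range $s\in(0,3)$ is what this scheme delivers.

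Part (ii) is then immediate: take $s_2=\tfrac d2+\tfrac\epsilon2$ so that $H^{s_2}(G)\hookrightarrow L^\infty(G)$, and replace the total order in (i) by $s+s_2$, which gives the rate $k^{(s+d/2+\epsilon/2-3)/2}=k^{(2s+d-6+\epsilon)/4}$, the $\epsilon$ accounting for the strictness of the Sobolev embedding. For part (iii) I would sandwich $\mathcal H_k$ between $L^2$-based spaces and then invoke compactness. Given $q,p,\gamma$ as in the hypothesis, choose $s_1,s_2>0$ with $s_1+s_2<3$, $s_1\ge\gamma+d(\tfrac1p-\tfrac12)_+$, and $s_2>\gamma+d(\tfrac12-\tfrac1q)_+$ (here $(\cdot)_+$ is the positive part); such $s_1,s_2$ exist exactly when $2\gamma+d(\tfrac1p-\tfrac12)_++d(\tfrac12-\tfrac1q)_+<3$, which, using $\tfrac1p+\tfrac1q=1$, is precisely the stated condition $\gamma<\min\{\tfrac32,\tfrac32+(\tfrac1q-\tfrac12)d\}$. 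Then the Sobolev embedding gives the continuous inclusion $W^{-\gamma,p}(B)\hookrightarrow H^{-s_1}(B)$, part (i) gives boundedness $\mathcal H_k:H^{-s_1}(B)\to H^{s_2}(G)$, and the Rellich--Kondrachov theorem on the bounded Lipschitz domain $G$ gives the compact inclusion $H^{s_2}(G)\hookrightarrow W^{\gamma,q}(G)$ (since $s_2>\gamma$ and $s_2-\tfrac d2\ge\gamma-\tfrac dq$). Composing these three maps shows $\mathcal H_k:W^{-\gamma,p}(B)\to W^{\gamma,q}(G)$ is compact.
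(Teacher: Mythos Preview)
Your strategy is essentially the paper's: for (i) a shell/off-shell frequency decomposition with the sharp $\kappa_{\rm r}^{-1}$ resolvent bound near the characteristic set, for (ii) a reduction to (i), and for (iii) sandwiching $\mathcal H_k$ between $L^2$-based spaces via Sobolev/Kondrachov embeddings, which is exactly how the paper proceeds. A few differences are worth noting.

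For (i), the paper does not pass through the black-box local resolvent estimate for $\mathcal R_\kappa$ and then worry about commuting $\langle D\rangle^{s_i}$ with spatial cutoffs. Instead it works directly by duality: writing $\langle\mathcal H_k\phi,\psi\rangle=\int(|\xi|^4-\kappa^4)^{-1}(1+|\xi|^2)^{s/2}\widehat{\mathcal J^{-s_1}\phi}\,\overline{\widehat{\mathcal J^{-s_2}\psi}}\,d\xi$ and then splitting $\mathbb R^d$ into $\{||\xi|-\kappa_{\rm r}|>\kappa_{\rm r}/2\}$ and its complement. Because the test functions $\phi,\psi$ are already supported in the bounded sets $B,G$, there is no commutator issue at all; the compact support is used only to guarantee $\widehat{\mathcal J^{-s_i}\phi}\in H^1(\mathbb R^d)$, which is precisely what the near-shell estimate needs (this is the maximal-function step you allude to). So your ``main obstacle'' disappears if you set things up by duality from the start.

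For (ii), your route via Sobolev embedding $H^{d/2+\epsilon/2}(G)\hookrightarrow L^\infty(G)$ is slightly different from the paper's, which instead writes $\mathcal H_k\phi(x)$ pointwise on the Fourier side and pairs against the explicit function $g(\xi)=e^{-{\rm i}x\cdot\xi}(1+|\xi|^2)^{-(d+\epsilon)/4}$, exploiting that $g\in H^1(\mathbb R^d)$ so the part (i) machinery applies verbatim. Both approaches impose the same implicit constraint that the total order $s+\tfrac{d+\epsilon}{2}$ lie in the range where the (i)-argument works; the applications in the paper use only small $s$, so this is harmless in context.

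For (iii), your argument matches the paper's line for line.
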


\begin{proof}
(i) Since the case $\sigma=0$ is discussed in \cite[Lemma 3.1]{LW2}, we only show the proof for the case $\sigma>0$ where $\kappa_{\rm i}>0$. For any two smooth test functions $\phi\in C_0^{\infty}(B)$ and $\psi\in C_0^{\infty}(G)$, we consider
\begin{align}\label{eq:esti1}
|\langle\mathcal H_k(\phi),\psi\rangle|&=\bigg|\int_{\mathbb R^d}\frac1{|\xi|^4-\kappa^4}\hat\phi(\xi)\hat\psi(\xi)d\xi\bigg|\notag\\
&=\bigg|\int_{\mathbb R^d}\frac{(1+|\xi|^2)^{\frac s2}}{(|\xi|^2+\kappa^2)(|\xi|+\kappa)(|\xi|-\kappa)}\widehat{\mathcal J^{-s_1}\phi}(\xi)\widehat{\mathcal J^{-s_2}\psi}(\xi)d\xi\bigg|,
\end{align}
where $\hat\phi$ and $\hat\psi$ are the Fourier transform of $\phi$ and $\psi$, respectively, and $\mathcal J^{-s}$ stands for the Bessel potential of order $-s$ and is defined by (cf. \cite{LW21})
\[
\mathcal J^{-s}f:=\mathcal F^{-1}\big((1+|\cdot|^2)^{-\frac s2}\hat f\big)
\]
with $\mathcal F^{-1}$ denoting the inverse Fourier transform. 

The integral domain $\mathbb R^d$ of \eqref{eq:esti1} can be split into two parts
\begin{align*}
\Omega_1:=\left\{\xi\in\mathbb R^d:||\xi|-\kappa_{\rm r}|>\frac{\kappa_{\rm r}}2\right\},\quad 
\Omega_2:=\left\{\xi\in\mathbb R^d:||\xi|-\kappa_{\rm r}|<\frac{\kappa_{\rm r}}2\right\}.
\end{align*}
Following the same procedure as \cite[Lemma 3.1]{LW2} and using \eqref{eq:kappari}, we get
\[
|\langle\mathcal H_k(\phi),\psi\rangle|\lesssim\kappa_{\rm r}^{s-3}\|\phi\|_{H^{-s_1}(B)}\|\psi\|_{H^{-s_2}(G)}\lesssim k^{\frac{s-3}2}\|\phi\|_{H^{-s_1}(B)}\|\psi\|_{H^{-s_2}(G)},
\]
which completes the proof by extending the above result to $\phi\in H^{-s_1}(B)$ and $\psi\in H^{-s_2}(G)$.

(ii) For any $\phi\in C_0^\infty(B)$, we still denote by $\phi$ its zero extension outside of $B$. It follows from the Plancherel theorem that 
\begin{align*}
\mathcal H_k(\phi)(x)&=\int_{\mathbb R^d}\Phi(x,z,k)\phi(z)dz\\
&=\int_{\mathbb R^d}(1+|\xi|^2)^{\frac s2}\widehat{\Phi}(x,\xi,k)\widehat{\mathcal J^{-s}\phi}(\xi)d\xi,\\
&=-\int_{\mathbb R^d}\frac{(1+|\xi|^2)^{\frac{2s+d+\epsilon}4}}{|\xi|^4-\kappa^4}\widehat{\mathcal J^{-s}\phi}(\xi)\big(e^{-{\rm i}x\cdot\xi}(1+|\xi|^2)^{-\frac{d+\epsilon}4}\big)d\xi,
\end{align*}
where 
\[
\widehat{\Phi}(x,\xi,k):=\mathcal F[\Phi(x,\cdot,k)](\xi)=\frac{-e^{-{\rm i}x\cdot\xi}}{|\xi|^4-\kappa^4}
\] 
is the Fourier transform of $\Phi(x,y,k)$ with respect to $y$. Comparing the above integral with \eqref{eq:esti1} and replacing $\widehat{\mathcal J^{-s_2}\psi}(\xi)$ by $g(\xi):=e^{-{\rm i}x\cdot\xi}(1+|\xi|^2)^{-\frac{d+\epsilon}4}$, we obtain 
\begin{align*}
|\mathcal H_k(\phi)(x)|
\lesssim k^{\frac{\frac{2s+d+\epsilon}2-3}2}\|\phi\|_{H^{-s}(B)}\lesssim k^{\frac{2s+d-6+\epsilon}4}\|\phi\|_{H^{-s}(B)},
\end{align*}
which can also be extended to $\phi\in H^{-s}(B)$. We mention that $g\in H^1(\mathbb R^d)$ is utilized in the above estimate, which is required in the estimate of \eqref{eq:esti1} (see e.g., \cite{LW21,LLW}).

(iii) The compactness of $\mathcal H_k$ can be obtained from the boundedness shown in (i) and the Sobolev embedding theorem. In fact,  according to the Kondrachov embedding theorem, the embeddings
\begin{align*}
&W^{-\gamma,p}(B)\hookrightarrow H^{-s_1}(B),\\
&H^{s_2}(G)\hookrightarrow W^{\gamma,q}(G)
\end{align*}
are continuous under conditions
\begin{align*}
&\gamma<s_1,\quad\frac12>\frac1p-\frac{s_1-\gamma}{d},\\
&\gamma<s_2,\quad\frac1q>\frac12-\frac{s_2-\gamma}d,
\end{align*}
and $s_1+s_2\in(0,3)$. It is easy to check that the above conditions are satisfied if $\frac1p+\frac1q=1$ and 
\[
0<\gamma<\min\left\{\frac{s_1+s_2}2,\frac{s_1+s_2}2+d\left(\frac1q-\frac12\right)\right\},
\]
which completes the proof of (iii) due to $s_1+s_2<3$.
\end{proof}

The estimates for the operator $\mathcal K_k$ can be obtained from the estimates of $\mathcal H_k$ given in Lemma \ref{lm:operatorH} and the relation $\mathcal K_k(\phi)=\mathcal H_k(\rho\phi)$. 

\begin{lemma}\label{lm:operatorK}
Let $G\subset\mathbb R^d$ be a bounded domain with a locally Lipschitz boundary and the random potential $\rho$ satisfy Assumption \ref{as:rho}.
\begin{itemize}
\item[(i)]  The operator $\mathcal K_k:W^{\gamma,q}(G)\to W^{\gamma,q}(G)$ is compact for any $q\in(2,A)$ and $\gamma\in(\frac{d-m}2,\frac32+(\frac1q-\frac12)d)$ with
\[
A:=\left\{\begin{aligned}
&\infty,\quad &&d=2\quad\text{or}\quad m=d=3,\\
&\frac{6}{3-m},\quad&&m<d=3,
\end{aligned}\right.
\]
and satisfies
\[
\|\mathcal K_k\|_{\mathcal L(W^{\gamma,q}(G))}\lesssim k^{\gamma+(\frac12-\frac1q)d-\frac32}\quad\mathbb P\text{-}a.s.
\]
\item[(ii)] The following estimates hold: 
\[
\|\mathcal K_k\|_{\mathcal L(H^{s}(G))}\lesssim k^{s-\frac32}\quad\mathbb P\text{-}a.s.
\]
for any $s\in(\frac{d-m}2,\frac32)$ and
\[
\|\mathcal K_k\|_{\mathcal L(H^s(G),L^{\infty}(G))}\lesssim k^{\frac{2s+d-6+\epsilon}4}\quad\mathbb P\text{-}a.s.
\]
for any $s\in(\frac{d-m}2,3)$ and $\epsilon>0$.
\end{itemize}
\end{lemma}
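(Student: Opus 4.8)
The proof rests on the factorization $\mathcal K_k=\mathcal H_k\circ M_\rho$, where $M_\rho\phi:=\rho\phi$ is multiplication by the potential, so the analysis splits into a Sobolev multiplication estimate for $M_\rho$ and the mapping properties of $\mathcal H_k$ already recorded in Lemma \ref{lm:operatorH}. The input for $M_\rho$ is the almost sure regularity of the rough potential: under Assumption \ref{as:rho} with $m\in(d-1,d]$ one has $\rho\in W^{\frac{m-d}{2}-\epsilon,p}(D)$ for every $\epsilon>0$ and $p>1$, $\mathbb P$-a.s. (see \cite[Lemma 2.6]{LW}). I would fix, once and for all, a single full-measure event on which all of these memberships hold — a countable intersection over rational parameters using monotonicity of the norms — so that every bound below holds $\mathbb P$-a.s. with a random constant depending only on a fixed Sobolev norm of $\rho$; since the $\mathcal H_k$-bounds are deterministic, the probabilistic constant enters only through $\|M_\rho\|$.

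The key step is to show that $M_\rho\colon W^{\gamma,q}(G)\to W^{-\gamma,q'}(D)$ and $M_\rho\colon H^s(G)\to H^{-s}(D)$ are bounded $\mathbb P$-a.s., where $\frac1q+\frac1{q'}=1$. Since $\rho$ is supported in $D$, only $\phi|_D$ enters. Writing $\rho\in W^{\frac{m-d}{2}-\epsilon,p}(D)$ and applying a pointwise Sobolev multiplication theorem, the product $\rho\phi$ lies in $W^{-\gamma,r}(D)$ with $\frac1r=\frac1q+\frac1p$, the smoothness budget being admissible precisely because $-\gamma<\frac{m-d}{2}-\epsilon$ for small $\epsilon$, i.e. because $\gamma>\frac{d-m}{2}$; choosing $p$ large enough that $r\ge q'$ — possible exactly because $q>2$ — and lowering the integrability on the bounded domain yields $\rho\phi\in W^{-\gamma,q'}(D)$. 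The $H^s$ version is the same with $q=2$ and $p$ large and needs only $s>\frac{d-m}{2}$. In both cases the constant is linear in a Sobolev norm of $\rho$, hence a.s. finite.

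The remaining two parts are then routine chaining. For the compactness in (i), composing the bounded $M_\rho$ with the compact operator $\mathcal H_k\colon W^{-\gamma,q'}(D)\to W^{\gamma,q}(G)$ from Lemma \ref{lm:operatorH}(iii) — whose hypotheses reduce for $q>2$ to $0<\gamma<\frac32+(\frac1q-\frac12)d$ — gives the claim; demanding that this $\gamma$-range meet $(\frac{d-m}{2},\infty)$, i.e. $\frac{d-m}{2}<\frac32+(\frac1q-\frac12)d$, is exactly the condition $q<A$ (vacuous for $d=2$ or $m=d=3$, equal to $q<\frac6{3-m}$ for $m<d=3$), which is where $A$ and the upper endpoint of the $\gamma$-interval come from. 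For the operator norm in (i), I would route through the Hilbert scale with $s_1:=\gamma+(\frac12-\frac1q)d<\frac32$: the endpoint Sobolev embeddings $W^{-\gamma,q'}(D)\hookrightarrow H^{-s_1}(D)$ (losing smoothness since $q'<2$) and $H^{s_1}(G)\hookrightarrow W^{\gamma,q}(G)$ (recovering it since $q>2$), together with Lemma \ref{lm:operatorH}(i) at $s_1=s_2$ (admissible as $s_1+s_2\in(0,3)$), give $\|\mathcal K_k\|_{\mathcal L(W^{\gamma,q}(G))}\lesssim k^{(2s_1-3)/2}=k^{\gamma+(\frac12-\frac1q)d-\frac32}$. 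For (ii), the same scheme with $M_\rho\colon H^s(G)\to H^{-s}(D)$ and Lemma \ref{lm:operatorH}(i) at $s_1=s_2=s$ (needing $s<\frac32$) yields $k^{s-\frac32}$, while replacing Lemma \ref{lm:operatorH}(i) by Lemma \ref{lm:operatorH}(ii) yields $k^{(2s+d-6+\epsilon)/4}$ on the larger range $s\in(\frac{d-m}{2},3)$ where Lemma \ref{lm:operatorH}(ii) applies.

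I expect the only genuinely delicate point to be the multiplication estimate of the second paragraph: one must verify the index conditions of a Sobolev multiplication theorem for the product of the negative-order Gaussian potential with a $W^{\gamma,q}$ (or $H^s$) function, and in particular check that the threshold $q>2$ is exactly what allows $\rho\phi$ to be placed in the dual-exponent space $W^{-\gamma,q'}(D)$ rather than in some strictly worse $W^{-\gamma,r}(D)$. Everything else is bookkeeping — chaining the mapping properties of $\mathcal H_k$ from Lemma \ref{lm:operatorH} with standard Sobolev embeddings and tracking powers of $k$ — and the $\mathbb P$-a.s. qualifier is inherited verbatim from the a.s. Sobolev regularity of $\rho$.
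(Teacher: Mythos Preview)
Your approach is essentially the paper's: factor $\mathcal K_k=\mathcal H_k\circ M_\rho$, control $M_\rho$ by a Sobolev multiplication estimate (the paper cites \cite[Lemma 2]{LPS08} after embedding $\rho$ into $W^{-\gamma,\tilde p}(D)$ with $\tilde p=\frac{q}{q-2}$, which is exactly your ``choose $p$ large enough that $r\ge q'$''), and then feed into Lemma \ref{lm:operatorH}. The compactness, the origin of $A$ as the nonemptiness condition on the $\gamma$-interval, and the $k$-exponent via the Hilbert detour $s_1=\gamma+(\tfrac12-\tfrac1q)d$ all match the paper line for line.

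There is one small but real slip in your sketch of (ii). The claim that the $H^s$ multiplication estimate ``is the same with $q=2$ and $p$ large'' does not go through: with $q=2$ one has $q'=2$, and your H\"older-type scheme gives $\rho\phi\in W^{-s,r}(D)$ with $\tfrac1r=\tfrac12+\tfrac1p>\tfrac12$, so $r<2$ and you cannot embed into $H^{-s}(D)=W^{-s,2}(D)$ on a bounded domain. You yourself flag that $q>2$ is precisely what makes the dual-exponent target reachable; the $H^s$ case is not an exception. The paper resolves this by first embedding $H^s(G)\hookrightarrow W^{\gamma,q}(G)$ for suitable $\gamma\in(\tfrac{d-m}{2},s)$ and $q\in(2,A)$, applying the $q>2$ multiplication estimate to land in $W^{-\gamma,p}(D)$ with $\tfrac1p+\tfrac1q=1$, and then embedding $W^{-\gamma,p}(D)\hookrightarrow H^{-s}(D)$. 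With that detour inserted, the rest of your argument for (ii) is correct.
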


\begin{proof}
(i) Under Assumption \ref{as:rho}, it holds that $\rho\in W^{\frac{m-d}2-\epsilon,p'}(D)$ for any $\epsilon>0$ and $p'>1$ based on \cite[Lemma 2.2]{LW2}. 
Then for any $m\in(d-1,d]$, $q\in(2,A)\neq\emptyset$ and $\gamma\in(\frac{d-m}2,\frac32+(\frac1q-\frac12)d)\neq\emptyset$, there exists some $p'>1$ such that the embedding
\[
W^{\frac{m-d}2-\epsilon,p'}(D)\hookrightarrow W^{-\gamma,\tilde p}(D)
\]
is continuous with $\tilde p=\frac{q}{q-2}>1$. Moreover, for any $\phi\in W^{\gamma,q}(G)$, we have from \cite[Lemma 2]{LPS08} that $\rho \phi\in W^{-\gamma,p}(D)$ with $\frac1p+\frac1q=1$ and
\begin{align}\label{eq:rhophi}
\|\rho\phi\|_{W^{-\gamma,p}(D)}\lesssim\|\rho\|_{W^{-\gamma,\tilde p}(D)}\|\phi\|_{W^{\gamma,q}(G)}.
\end{align}
Hence
\[
\|\mathcal K_k(\phi)\|_{W^{\gamma,q}(G)}\lesssim\|\mathcal H_k\|_{\mathcal L(W^{-\gamma,p}(D),W^{\gamma,q}(G))}\|\rho\phi\|_{W^{-\gamma,p}(D)}\quad\mathbb P\text{-}a.s.,
\] 
which implies the compactness of $\mathcal K_k$ due to the compactness of $\mathcal H_k$ proved in Lemma \ref{lm:operatorH}.

To estimate the operator norm, we choose $s=\gamma+(\frac12-\frac1q)d$ such that the embeddings
\begin{equation}\label{eq:embedding}
\begin{aligned}
H^s(G)\hookrightarrow W^{\gamma,q}(G),\\
W^{-\gamma,p}(D)\hookrightarrow H^{-s}(D)
\end{aligned}
\end{equation}
hold with $p$ satisfying $\frac1p+\frac1q=1$. The result is obtained by noting 
\begin{align*}
\|\mathcal K_k(\phi)\|_{W^{\gamma,q}(G)} &\lesssim\|\mathcal K_k(\phi)\|_{H^s(G)}\lesssim\|\mathcal H_k\|_{\mathcal L(H^{-s}(D),H^s(G))}\|\rho\phi\|_{H^{-s}(D)}\\
&\lesssim\|\mathcal H_k\|_{\mathcal L(H^{-s}(D),H^s(G))}\|\rho\phi\|_{W^{-\gamma,p}(D)}\\
&\lesssim k^{\gamma+(\frac12-\frac1q)d-\frac32}\|\phi\|_{W^{\gamma,q}(G)}.
\end{align*}

(ii) For any $\phi\in H^s(G)$ with $s>\frac{d-m}2$, there exist $\gamma\in(\frac{d-m}2,s)$ and $q\in(2,A)$ satisfying $\frac1q>\frac12-\frac{s-\gamma}2$ such that the embeddings \eqref{eq:embedding} hold. It follows from Lemma \ref{lm:operatorH} and \eqref{eq:rhophi} that we have 
\begin{align}\label{eq:K}
\|\mathcal K_k(\phi)\|_{H^s(G)}&\lesssim\|\mathcal H_k\|_{\mathcal L(H^{-s}(D),H^s(G))}\|\rho\phi\|_{H^{-s}(D)}\lesssim\|\mathcal H_k\|_{\mathcal L(H^{-s}(D),H^s(G))}\|\rho\phi\|_{W^{-\gamma,p}(D)}\notag\\
&\lesssim k^{\frac{2s-3}2}\|\rho\|_{W^{-\gamma,\tilde p}(D)}\|\phi\|_{W^{\gamma,q}(G)}
\lesssim k^{s-\frac32}\|\phi\|_{H^s(G)}\quad\mathbb P\text{-}a.s.
\end{align}
with $s\in(\frac{d-m}2,\frac32)$, and
\begin{align*}
\|\mathcal K_k(\phi)\|_{L^{\infty}(G)}\lesssim&\|\mathcal H_k\|_{\mathcal L(H^{-s}(D),L^{\infty}(G))}\|\rho\phi\|_{H^{-s}(D)}\lesssim k^{\frac{2s+d-6+\epsilon}4}\|\phi\|_{H^s(G)}\quad\mathbb P\text{-}a.s.
\end{align*}
with $s\in(\frac{d-m}2,3)$ and $\epsilon>0$.
\end{proof}

\section{The unique continuation}\label{sec:3}

This section is to investigate the unique continuation principle, which is essential for the uniqueness of the solution to the biharmonic wave scattering problem with a random potential. We refer to \cite{LPS08,LW21} for the unique continuation of the solutions to the stochastic acoustic and elastic wave equations.

\begin{theorem}\label{tm:unique}
Let $\rho$ be a distribution satisfying Assumption \ref{as:rho}, $q\in(2,\frac{2d}{3d-2m-2})$ and $\gamma\in(\frac{d-m}2,\frac12+(\frac1q-\frac12)\frac d2)$. If $u\in W^{\gamma,q}(\mathbb R^d)$ is compactly supported in $\mathbb R^d$ and is a distributional solution to the homogeneous biharmonic wave equation 
\[
\Delta^2u-\kappa^4u+\rho u=0,
\]
then $u\equiv0$ in $\mathbb R^d$.
\end{theorem}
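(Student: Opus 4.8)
The plan is to follow the strategy developed for second-order operators in \cite{LPS08,LW21}: first upgrade the a priori regularity of $u$, then conjugate the equation by a complex exponential (complex geometrical optics) so that it becomes a Fourier-multiplier integral equation of Faddeev type whose operator norm decays as the imaginary part of the complex frequency vector tends to infinity. The high order of the biharmonic operator is exactly what will allow the full range $m\in(d-1,d]$ with no extra restriction as in \cite{LLW,LLW2}.

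\textbf{Step 1: regularity bootstrap.} Since $\rho$ is supported in $D$ and $u$ is compactly supported, the Sobolev multiplication bound \eqref{eq:rhophi} (using $\rho\in W^{\frac{m-d}2-\epsilon,p'}$ for all $p'>1$) gives $\rho u\in W^{-\gamma,p}(D)$ with $\frac1p+\frac1q=1$. Rewriting the equation as $(\Delta^2-\kappa^4)u=-\rho u$ and inverting the symbol $|\xi|^4-\kappa^4$ — which is nonvanishing on $\mathbb R^d$ when $\sigma>0$, and otherwise handled via Rellich's lemma, which forces the homogeneous outgoing entire solution to vanish — one obtains the representation $u=\mathcal H_\kappa(\rho u)=\mathcal K_\kappa(u)$. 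Iterating this identity and tracking the smoothness and integrability indices through Lemmas~\ref{lm:operatorH}--\ref{lm:operatorK} together with the Sobolev embeddings, one improves $u$ from $W^{\gamma,q}(\mathbb R^d)$ into $H^{s}(\mathbb R^d)$ for some $s\in(\frac{d-m}2,\frac32)$; such an $s$ exists precisely because of the stated ranges of $q$ and $\gamma$.

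\textbf{Step 2: complex geometrical optics.} Choose complex vectors $\zeta=\zeta(t)\in\mathbb C^d$ with $\zeta\cdot\zeta=\kappa^2$ and $|\Im\zeta|=t\to\infty$, and set $v_t:=e^{-{\rm i}x\cdot\zeta}u$, which is again compactly supported and lies in $H^s$. Conjugation turns the equation into $L_\zeta v_t+\rho v_t=0$, where $L_\zeta:=e^{-{\rm i}x\cdot\zeta}(\Delta^2-\kappa^4)e^{{\rm i}x\cdot\zeta}$ has Fourier symbol
\[
(\xi+\zeta)^4-\kappa^4=\big((\xi+\zeta)\cdot(\xi+\zeta)-\kappa^2\big)\big((\xi+\zeta)\cdot(\xi+\zeta)+\kappa^2\big),
\]
which vanishes at $\xi=0$. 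Letting $\mathcal G_\zeta$ be the associated Faddeev-type Green operator (the Fourier multiplier $[(\xi+\zeta)^4-\kappa^4]^{-1}$, defined by the standard limiting-absorption prescription near the singular set), the equation becomes $v_t=-\mathcal G_\zeta(\rho v_t)$.

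\textbf{Step 3: decay of $\mathcal G_\zeta$ and conclusion; the main obstacle.} The crux is the estimate $\|\mathcal G_\zeta\|_{\mathcal L(H^{-s}(D),H^{s}(\mathbb R^d))}\lesssim t^{-\delta}$ for $t$ large, with the $s$ of Step~1 and some $\delta>0$. This is proved in the spirit of Lemma~\ref{lm:operatorH} by splitting frequency space into a neighborhood of the singular set of $(\xi+\zeta)^4-\kappa^4$ — a region of small measure clustering near spheres of radius $\sim t$ (intersected with slabs perpendicular to $\Im\zeta$) — and its complement: on the complement the denominator is bounded below by a positive power of $t$ (and by $|\xi|^4$ at high frequency), while near each singular sphere one of the two factors stays bounded away from zero (by a fixed multiple of $|\kappa^2|$), so the singular contribution reduces to that of a single second-order Faddeev multiplier integrated over a thin set; the possible overlap of the two singular spheres, which occurs when $\sigma=0$, is treated directly for the order-four symbol. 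Combining this with $\|\rho\phi\|_{H^{-s}(D)}\lesssim\|\phi\|_{H^s}$ yields $\|v_t\|_{H^s}\lesssim t^{-\delta}\|v_t\|_{H^s}$, so $v_t\equiv0$ for $t$ large and hence $u=e^{{\rm i}x\cdot\zeta}v_t\equiv0$. The technical heart is exactly this quantitative decay in $|\Im\zeta|$ of the shifted biharmonic Green operator in the Sobolev scale adapted to the rough multiplier $\rho$, together with the analysis of the Green's function near its singular variety; a secondary point is making the conjugation and the distributional product $\rho v_t$ rigorous (Paley--Wiener for compactly supported distributions) and disposing of the $\sigma=0$ case in the representation step of Step~1.
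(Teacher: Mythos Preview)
Your overall plan---complex geometrical optics conjugation, Faddeev-type Green operator $\mathcal G_\zeta$, decay of its norm as $|\Im\zeta|\to\infty$---is exactly the paper's strategy, and your Step~3 outline (partial fractions splitting the fourth-order symbol into two second-order Faddeev multipliers, each handled as in \cite{LW21,LPS08}) correctly identifies the reduction the paper carries out in detail. Two points deserve comment.

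\textbf{Step~1 is flawed and unnecessary.} For $\sigma=0$ the symbol $|\xi|^4-\kappa^4$ vanishes on the real sphere $|\xi|=k^{1/2}$, so ``inverting the symbol'' is not immediate, and Rellich's lemma (which concerns radiating solutions outside a ball) does not justify the representation $u=\mathcal K_\kappa u$ for a compactly supported $u$; this is a genuine gap as written. Fortunately the bootstrap is unneeded: since $u$ is compactly supported in a bounded $G$ and $q>2$, the elementary embedding $W^{\gamma,q}(G)\hookrightarrow H^\gamma(G)$ already gives $v_t\in H^\gamma$, and the hypothesis forces $\gamma\in(\frac{d-m}2,\frac12)$, which is precisely the range your Step~3 needs. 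The paper bypasses any such bootstrap and works directly with the conjugated equation.

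\textbf{Your Step~3 differs from (and in fact simplifies) the paper's endgame.} After the $H^{-s}\to H^s$ decay estimate for $s\in(0,\tfrac12)$, the paper additionally establishes an $L^r\to L^{r'}$ bound for $\mathcal G_\eta$ (invoking the uniform Sobolev inequalities of \cite{KRS87} for $d=3$ and \cite{LPS08} for $d=2$), interpolates to a $W^{-\gamma,p}\to W^{\gamma,q}$ estimate, and closes in $W^{\gamma,q}$ via \eqref{eq:rhophi}. Your route---closing directly in $H^s$ using $\|\rho v\|_{H^{-s}}\lesssim\|v\|_{H^s}$ for $s>\frac{d-m}2$, which the paper itself proves inside Lemma~\ref{lm:operatorK}(ii)---is legitimate and avoids the interpolation machinery entirely; it works exactly because $\gamma<\frac12$. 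Two caveats: the heart of the $H^{-s}\to H^s$ decay is the cancellation near the singular manifold via an explicit reflection $\xi\mapsto\xi^*$ (as in \cite{LW21,LPS08}), not merely smallness of a thin set, so your description undersells the main technical work; and your concern about ``overlap of the two singular spheres'' is unfounded, since after partial fractions the two second-order symbols $|\xi|^2+2\eta\cdot\xi$ and $|\xi|^2+2\eta\cdot\xi+2\kappa^2$ have disjoint zero sets regardless of $\sigma$.
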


\begin{proof}
We consider an auxiliary function $v(x):=e^{-{\rm i}\eta\cdot x}u(x)$, where the complex vector $\eta$ is defined by
\[
\eta:=(\omega t,0,\cdots,0,\eta_d)^\top,\quad t\gg1,
\]
where 
\[
\omega:=\bigg(\frac{\sqrt{k^4+\sigma^2k^2}+k^2}2\bigg)^{\frac14}
\] 
and $\eta_d=\eta_d^{\rm r}+{\rm i}\eta_d^{\rm i}$ with the real and imaginary parts being given by 
\begin{align*}
\eta_d^{\rm r}&=\bigg(\frac{\sqrt{\omega^4(t^2-1)^2+\omega^4-k^2}-\omega^2(t^2-1)}2\bigg)^{\frac12},\\
\eta_d^{\rm i}&=\bigg(\frac{\sqrt{\omega^4(t^2-1)^2+\omega^4-k^2}+\omega^2(t^2-1)}2\bigg)^{\frac12},
\end{align*}
respectively. It is clear to note $\eta\cdot\eta=\kappa^2=\omega^2+{\rm i}(\omega^4-k^2)^{\frac12}$. Moreover, a simple calculation shows that 
\begin{align}\label{eq:eta}
\lim_{t\to\infty}\eta_d^{\rm r}=0,\quad\lim_{t\to\infty}\frac{\eta_d^{\rm i}}{t}=\omega.
\end{align}
Then $v$ is also compactly supported in $\mathbb R^d$ and satisfies
\[
\Delta^2v+4{\rm i}\eta\cdot\nabla\Delta v-4\eta^\top\nabla^2v\eta-2(\eta\cdot\eta)\Delta v-4{\rm i}(\eta\cdot\eta)(\eta\cdot\nabla v)=-\rho v.
\]
Taking the Fourier transform of the above equation yields  
\begin{align}\label{eq:v}
v=-\mathcal G_\eta(\rho v),
\end{align}
where $\mathcal G_\eta$ is defined by
\begin{align*}
\mathcal G_\eta(f)(x):=\mathcal{F}^{-1}\bigg[\frac{\hat f(\xi)}{|\xi|^4+4|\xi|^2(\eta\cdot\xi)+4(\eta\cdot\xi)^2+2(\eta\cdot\eta)|\xi|^2+4(\eta\cdot\eta)(\eta\cdot\xi)}\bigg](x),\quad \xi\in\mathbb R^d. 
\end{align*}

It suffices to show $v\equiv0$ in order to show show $u\equiv0$. The proof consists of two steps. The first step is to estimate the operator $\mathcal G_\eta$ in Hilbert spaces. 

Let $G\subset\mathbb R^d$ be a bounded domain with a locally Lipschitz boundary containing the compact supports of both $\rho$ and $u$. For any $f,g\in C_0^\infty(G)$, we denote their zero extensions outside of $G$ still by $f,g$ for simplicity. Using the Plancherel theorem, we have from a straightforward calculation that 
\begin{align}\label{eq:Geta}
\langle\mathcal G_\eta f,g\rangle=\langle\widehat{\mathcal G_\eta f},\hat g\rangle
&=\int_{\mathbb R^d}\frac{\hat f(\xi)\overline{\hat g(\xi)}}{|\xi|^4+4|\xi|^2(\eta\cdot\xi)+4(\eta\cdot\xi)^2+2\kappa^2|\xi|^2+4\kappa^2(\eta\cdot\xi)}d\xi\notag\\
&=\int_{\mathbb R^d}\frac{\hat f(\xi)\overline{\hat g(\xi)}}{(|\xi|^2+2\eta\cdot\xi+2\kappa^2)(|\xi|^2+2\eta\cdot\xi)}d\xi\notag\\
&=\frac1{2\kappa^2}\bigg[\int_{\mathbb R^d}\frac{\hat f(\xi)\overline{\hat g(\xi)}}{|\xi|^2+2\eta\cdot\xi}d\xi-\int_{\mathbb R^d}\frac{\hat f(\xi)\overline{\hat g(\xi)}}{|\xi|^2+2\eta\cdot\xi+2\kappa^2}d\xi\bigg]\notag\\
&=:\frac1{2\kappa^2}\big[\mathcal{A}-\mathcal{B}\big].
\end{align}
Denote $\xi^{-}:=(\xi_1,\cdots,\xi_{d-1})^\top\in\mathbb R^{d-1}$ and $\xi^{--}:=(\xi_2,\cdots,\xi_{d-1})^\top\in\mathbb R^{d-2}$ with $\xi^{--}=0$ if $d=2$. Then $\mathcal{A}$ can be rewritten as
\begin{align*}
\mathcal{A}
&=\int_{\mathbb R^d}\frac{\hat f(\xi)\overline{\hat g(\xi)}}{|\xi|^2+2\omega t\xi_1+2\eta_d\xi_d}d\xi\\
&=\int_{\mathbb R^d}\frac{\hat f(\xi)\overline{\hat g(\xi)}}{(\xi_1+\omega t)^2+|\xi^{--}|^2-\omega^2t^2+(\xi_d+\eta_d^{\rm r})^2-(\eta_d^{\rm r})^2+2{\rm i}\eta_d^{\rm i}\xi_d}d\xi\\
&=\int_{\mathbb R^d}\frac{\hat f(\xi)\overline{\hat g(\xi)}}{|\xi|^2-\omega^2t^2-(\eta_d^{\rm r})^2+2{\rm i}\eta_d^{\rm i}(\xi_d-\eta_d^{\rm r})}d\xi,
\end{align*}
where we used the transformation of variables $(\xi_1+\omega t,\xi_2,\cdots,\xi_d+\eta_d^{\rm r})^\top\mapsto(\xi_1,\cdots,\xi_d)^\top$ and $\hat f(\xi_1,\cdots,\xi_j-a,\cdots,\xi_d)=e^{-{\rm i}a\xi_j}\hat f(\xi)$.
Using $\kappa^2=\eta\cdot\eta=\omega^2t^2+\eta_d^2$, the transformation $(\xi_1+\omega t,\xi_2,\cdots,\xi_d+\eta_d^{\rm r})^\top\mapsto(\xi_1,\cdots,\xi_d)^\top$, and $\omega^2t^2+(\eta_d^{\rm r})^2-2(\eta_d^{\rm i})^2<0$ as $t\gg1$, we have 
\begin{align*}
\mathcal{B}&=\int_{\mathbb R^d}\frac{\hat f(\xi)\overline{\hat g(\xi)}}{(\xi_1+\omega t)^2+|\xi^{--}|^2+(\xi_d+\eta_d^{\rm r})^2+\omega^2t^2+(\eta_d^{\rm r})^2-2(\eta_d^{\rm i})^2+2{\rm i}\eta_d^{\rm i}(\xi_d+2\eta_d^{\rm r})}d\xi\\
&=\int_{\mathbb R^d}\frac{\hat f(\xi)\overline{\widehat g(\xi)}}{|\xi|^2+\omega^2t^2+(\eta_d^{\rm r})^2-2(\eta_d^{\rm i})^2+2{\rm i}\eta_d^{\rm i}(\xi_d+\eta_d^{\rm r})}d\xi. 
\end{align*}

The estimates for $\mathcal{A}$ and $\mathcal{B}$ are similar, and the integral domain $\mathbb R^d$ needs to be decomposed into several subdomains according to the singularity of the integrands. In the following, we show the detail of the estimate for $\mathcal A$. The analysis of $\mathcal B$ can be carried out analogously and is omitted for brevity. 

It is easy to see that the function 
\[
\frac{1}{|\xi|^2-\omega^2t^2-(\eta_d^{\rm r})^2+2{\rm i}\eta_d^{\rm i}(\xi_d-\eta_d^{\rm r})}=\frac{1}{|\xi^-|^2-\omega^2t^2+(\xi_d-\eta_d^{\rm r})(\xi_d+\eta_d^{\rm r})+2{\rm i}\eta_d^{\rm i}(\xi_d-\eta_d^{\rm r})}
\]
is singular on the manifold
$\{\xi\in\mathbb R^d: |\xi^-|=\omega t,~\xi_d=\eta_d^{\rm r}\}.$ Define two domains
\begin{align*}
\Omega_1:&=\left\{\xi:||\xi^-|-\omega t|>\frac{\omega t}2\right\}=\left\{
\xi:|\xi^-|>\frac{3\omega t}2\right\}\cup\left\{
\xi:|\xi^-|<\frac{\omega t}2\right\},\\
\Omega_2:&=\left\{\xi:||\xi^-|-\omega t|<\frac{\omega t}2\right\}=\left\{
\xi:\frac{\omega t}2<|\xi^-|<\frac{3\omega t}2\right\}. 
\end{align*}
Based on $\Omega_1$ and $\Omega_2$, $\mathcal{A}$ can be split into the following two terms:
\begin{align*}
\mathcal{A}&=\int_{\Omega_1}\frac{(1+|\xi|^2)^s}{|\xi|^2-\omega^2t^2-(\eta_d^{\rm r})^2+2{\rm i}\eta_d^{\rm i}(\xi_d-\eta_d^{\rm r})}\widehat{\mathcal J^{-s}f}(\xi)\overline{\widehat{\mathcal J^{-s}g}(\xi)}d\xi\\
&\quad +\int_{\Omega_2}\frac{(1+|\xi|^2)^s}{|\xi|^2-\omega^2t^2-(\eta_d^{\rm r})^2+2{\rm i}\eta_d^{\rm i}(\xi_d-\eta_d^{\rm r})}\widehat{\mathcal J^{-s}f}(\xi)\overline{\widehat{\mathcal J^{-s}g}(\xi)}d\xi\\
&=:{\rm I}+{\rm II},
\end{align*}
where $s\in(0,\frac12)$. Next is to estimate ${\rm I}$ and ${\rm II}$, respectively. 

Term ${\rm I}$ satisfies
\begin{align*}
|{\rm I}|&\le\int_{\Omega_1}\frac{(1+|\xi|^2)^s}{\left[(|\xi|^2-\omega^2t^2-(\eta_d^{\rm r})^2)^2+4(\eta_d^{\rm i})^2(\xi_d-\eta_d^{\rm r})^2\right]^{\frac12}}|\widehat{\mathcal J^{-s}f}||\widehat{\mathcal J^{-s}g}|d\xi\\
&=\int_{\{\xi:|\xi^-|>\frac{3\omega t}2\}}\frac{(1+|\xi|^2)^s}{\left[(|\xi|^2-\omega^2t^2-(\eta_d^{\rm r})^2)^2+4(\eta_d^{\rm i})^2(\xi_d-\eta_d^{\rm r})^2\right]^{\frac12}}|\widehat{\mathcal J^{-s}f}||\widehat{\mathcal J^{-s}g}|d\xi\\
&\quad+\int_{\{\xi:|\xi^-|<\frac{\omega t}2,|\xi_d-\eta_d^{\rm r}|>\frac{\omega t}2\}}\frac{(1+|\xi|^2)^s}{\left[(|\xi|^2-\omega^2t^2-(\eta_d^{\rm r})^2)^2+4(\eta_d^{\rm i})^2(\xi_d-\eta_d^{\rm r})^2\right]^{\frac12}}|\widehat{\mathcal J^{-s}f}||\widehat{\mathcal J^{-s}g}|d\xi\\
&\quad+\int_{\{\xi:|\xi^-|<\frac{\omega t}2,|\xi_d-\eta_d^{\rm r}|<\frac{\omega t}2\}}\frac{(1+|\xi|^2)^s}{\left[(|\xi|^2-\omega^2t^2-(\eta_d^{\rm r})^2)^2+4(\eta_d^{\rm i})^2(\xi_d-\eta_d^{\rm r})^2\right]^{\frac12}}|\widehat{\mathcal J^{-s}f}||\widehat{\mathcal J^{-s}g}|d\xi\\
&=:{\rm I}_1+{\rm I}_2+{\rm I}_3.
\end{align*} 
By \eqref{eq:eta}, we may choose a sufficiently large $t^*$ such that $\eta_d^{\rm r}<\frac{\omega t}4$ for all $t>t^*$, which leads to
\[
\frac{3\omega t}2-\sqrt{\omega^2t^2+(\eta_d^{\rm r})^2}>\frac{\omega t}4,\quad t>t^*.
\]
We then get 
\begin{align*}
{\rm I}_1 &\le\int_{\{\xi:|\xi|>\frac{3\omega t}2\}}\frac{(1+|\xi|^2)^s}{(|\xi|-\sqrt{\omega^2t^2+(\eta_d^{\rm r})^2})(|\xi|+\sqrt{\omega^2t^2+(\eta_d^{\rm r})^2})}|\widehat{\mathcal J^{-s}f}||\widehat{\mathcal J^{-s}g}|d\xi\\
&\lesssim\frac1{\omega t}\int_{\{\xi:|\xi|>\frac{3\omega t}2\}}\frac1{|\xi|^{1-2s}}|\widehat{\mathcal J^{-s}f}||\widehat{\mathcal J^{-s}g}|d\xi\\
&\lesssim\frac1{(\omega t)^{2-2s}}\|f\|_{H^{-s}(G)}\|g\|_{H^{-s}(G)}.
\end{align*}
Note also that $\eta_d^{\rm i}$ is equivalent to $\omega t$ as $t\to\infty$, which yields
\begin{align*}
{\rm I}_2 &\le\int_{\{\xi:|\xi^-|<\frac{\omega t}2,|\xi_d-\eta_d^{\rm r}|>\frac{\omega t}2\}}\frac{(1+\frac{\omega^2t^2}4+\xi_d^2)^s}{2\eta_d^{\rm i}|\xi_d-\eta_d^{\rm r}|}|\widehat{\mathcal J^{-s}f}||\widehat{\mathcal J^{-s}g}|d\xi\\
&\lesssim\int_{\{\xi:|\xi^-|<\frac{\omega t}2,|\xi_d-\eta_d^{\rm r}|>\frac{\omega t}2\}}\frac{(\omega t)^{2s}+|\xi_d-\eta_d^{\rm r}|^{2s}+(\eta_d^{\rm r})^{2s}}{2\eta_d^{\rm i}|\xi_d-\eta_d^{\rm r}|}|\widehat{\mathcal J^{-s}f}||\widehat{\mathcal J^{-s}g}|d\xi\\
&\lesssim\int_{\{\xi:|\xi^-|<\frac{\omega t}2,|\xi_d-\eta_d^{\rm r}|>\frac{\omega t}2\}}\left(\frac1{(\omega t)^{2-2s}}+\frac1{\omega t|\xi_d-\eta_d^{\rm r}|^{1-2s}}\right)|\widehat{\mathcal J^{-s}f}||\widehat{\mathcal J^{-s}g}|d\xi\\
&\lesssim\frac1{(\omega t)^{2-2s}}\|f\|_{H^{-s}(G)}\|g\|_{H^{-s}(G)}.
\end{align*}
Moreover, for any $\xi\in\{\xi:|\xi^-|<\frac{\omega t}2,|\xi_d-\eta_d^{\rm r}|<\frac{\omega t}2\}$, it holds 
\[
|\xi|^2=|\xi^-|^2+|\xi_d|^2<\left(\frac{\omega t}2\right)^2+\left(\frac{\omega t}2+\eta_d^{\rm r}\right)^2=\frac{\omega^2t^2}2+\omega t\eta_d^{\rm r}+(\eta_d^{\rm r})^2.
\]
Hence, for $t>t^*$,
\[
\omega^2t^2+(\eta_d^{\rm r})^2-|\xi|^2>\frac{\omega^2t^2}2-\omega t\eta_d^{\rm r}>\frac{\omega^2t^2}4,
\]
which gives 
\begin{align*}
{\rm I}_3 &\le\int_{\{\xi:|\xi^-|<\frac{\omega t}2,|\xi_d-\eta_d^{\rm r}|<\frac{\omega t}2\}}\frac{(1+|\xi|^2)^s}{||\xi|^2-\omega^2t^2-(\eta_d^{\rm r})^2|}|\widehat{\mathcal J^{-s}f}||\widehat{\mathcal J^{-s}g}|d\xi\\
&\lesssim\frac1{(\omega t)^{2-2s}}\|f\|_{H^{-s}(G)}\|g\|_{H^{-s}(G)}.
\end{align*}
We then conclude
\begin{align}\label{eq:1}
|{\rm I}|\lesssim\frac1{(\omega t)^{2-2s}}\|f\|_{H^{-s}(G)}\|g\|_{H^{-s}(G)}.
\end{align}

To estimate ${\rm II}$, we divide it into two parts
\begin{align*}
{\rm II}&=\int_{\Omega_2\cap\{\xi:|\xi_d-\eta_d^{\rm r}|>\frac{\omega t}2\}}\frac{(1+|\xi|^2)^s}{|\xi|^2-\omega^2t^2-(\eta_d^{\rm r})^2+2{\rm i}\eta_d^{\rm i}(\xi_d-\eta_d^{\rm r})}\widehat{\mathcal J^{-s}f}(\xi)\overline{\widehat{\mathcal J^{-s}g}(\xi)}d\xi\\
&\quad +\int_{\Omega_2\cap\{\xi:|\xi_d-\eta_d^{\rm r}|<\frac{\omega t}2\}}\frac{(1+|\xi|^2)^s}{|\xi|^2-\omega^2t^2-(\eta_d^{\rm r})^2+2{\rm i}\eta_d^{\rm i}(\xi_d-\eta_d^{\rm r})}\widehat{\mathcal J^{-s}f}(\xi)\overline{\widehat{\mathcal J^{-s}g}(\xi)}d\xi\\
&=:{\rm II}_1+{\rm II}_2,
\end{align*}
where ${\rm II}_1$ can be estimated similarly as ${\rm I}_2$ by utilizing the boundedness of $|\xi^-|$:
\begin{align*}
|{\rm II}_1|\lesssim\frac1{(\omega t)^{2-2s}}\|f\|_{H^{-s}(G)}\|g\|_{H^{-s}(G)}.
\end{align*}

It suffices to estimate ${\rm II}_2$ where the integrand is singular. To deal with the singularity, we denote 
\[
n_t(\xi):=\frac1{|\xi|^2-\omega^2t^2-(\eta_d^{\rm r})^2+2{\rm i}\eta_d^{\rm i}(\xi_d-\eta_d^{\rm r})}
\]
and define the transformation 
\[
\tau:\xi\mapsto\xi^*=(\xi',-\xi_d+2\eta_d^{\rm r}),\quad\xi\in\Omega_2,
\] 
where
\[
\xi':=\left(\frac{2\omega t}{|\xi^-|}-1\right)\xi^-. 
\]
A simple calculation yields that $|\xi'|=2\omega t-|\xi^-|$ and the Jacobian of the transformation is
\[
J_{d,t}(\xi)=\left|\det\frac{\partial \xi^*}{\partial \xi}\right|=\left(\frac{2\omega t}{|\xi^-|}-1\right)^{d-2}. 
\]
Moreover, it can be verified that the transformation maps the subdomain
\[
\Omega_{21}:=\left\{\xi:\frac{\omega t}2<|\xi^-|<\omega t,|\xi_d-\eta_d^{\rm r}|<\frac{\omega t}2\right\}
\]
to the subdomain
\[
\Omega_{22}:=\left\{\xi:\omega t<|\xi^-|<\frac{3\omega t}2,|\xi_d-\eta_d^{\rm r}|<\frac{\omega t}2\right\},
\]
and vice versa. 

Based on $\Omega_{21}$ and $\Omega_{22}$, ${\rm II}_2$ can be subdivided into several parts:
\begin{align*}
{\rm II}_2&=\int_{\Omega_2\cap\{\xi:|\xi_d-\eta_d^{\rm r}|<\frac{\omega t}2\}}\frac{(1+|\xi|^2)^s}{|\xi|^2-\omega^2t^2-(\eta_d^{\rm r})^2+2{\rm i}\eta_d^{\rm i}(\xi_d-\eta_d^{\rm r})}\widehat{\mathcal J^{-s}f}(\xi)\overline{\widehat{\mathcal J^{-s}g}(\xi)}d\xi\\
&=\int_{\Omega_{21}\cup\Omega_{22}}n_t(\xi)(1+|\xi|^2)^s\widehat{\mathcal J^{-s}f}(\xi)\overline{\widehat{\mathcal J^{-s}g}(\xi)}d\xi\\
&=\int_{\Omega_{22}}\left[n_t(\xi)(1+|\xi|^2)^s\widehat{\mathcal J^{-s}f}(\xi)\overline{\widehat{\mathcal J^{-s}g}(\xi)}+n_t(\xi^*)J_{d,t}(\xi)(1+|\xi^*|^2)^s\widehat{\mathcal J^{-s}f}(\xi^*)\overline{\widehat{\mathcal J^{-s}g}(\xi^*)}\right]d\xi\\
&=\int_{\Omega_{22}}\left[n_t(\xi)+n_t(\xi^*)J_{d,t}(\xi)\right](1+|\xi|^2)^s\widehat{\mathcal J^{-s}f}(\xi)\overline{\widehat{\mathcal J^{-s}g}(\xi)}d\xi\\
&\quad +\int_{\Omega_{22}}n_t(\xi^*)J_{d,t}(\xi)\left[(1+|\xi^*|^2)^s-(1+|\xi|^2)^s\right]\widehat{\mathcal J^{-s}f}(\xi)\overline{\widehat{\mathcal J^{-s}g}(\xi)}d\xi\\
&\quad +\int_{\Omega_{22}}n_t(\xi^*)J_{d,t}(\xi)(1+|\xi^*|^2)^s\left[\widehat{\mathcal J^{-s}f}(\xi^*)-\widehat{\mathcal J^{-s}f}(\xi)\right]\overline{\widehat{\mathcal J^{-s}g}(\xi)}d\xi\\
&\quad +\int_{\Omega_{22}}n_t(\xi^*)J_{d,t}(\xi)(1+|\xi^*|^2)^s\widehat{\mathcal J^{-s}f}(\xi^*)\overline{\left[\widehat{\mathcal J^{-s}g}(\xi^*)-\widehat{\mathcal J^{-s}g}(\xi)\right]}d\xi\\
&=:{\rm II}_{21}+{\rm II}_{22}+{\rm II}_{23}+{\rm II}_{24},
\end{align*}
where we used the fact
\begin{align*}
&\int_{\Omega_{21}}n_t(\xi)(1+|\xi|^2)^s\widehat{\mathcal J^{-s}f}(\xi)\overline{\widehat{\mathcal J^{-s}g}(\xi)}d\xi\\
&=\int_{\Omega_{21}}n_t(\xi^*)(1+|\xi^*|^2)^s\widehat{\mathcal J^{-s}f}(\xi^*)\overline{\widehat{\mathcal J^{-s}g}(\xi^*)}d\xi^*\\
&=\int_{\Omega_{22}}n_t(\xi^*)(1+|\xi^*|^2)^s\widehat{\mathcal J^{-s}f}(\xi^*)\overline{\widehat{\mathcal J^{-s}g}(\xi^*)}J_{d,t}(\xi)d\xi.
\end{align*}
Noting
\begin{align*}
n_t(\xi^*)&=\frac1{|\xi^*|^2-\omega^2t^2-(\eta_d^{\rm r})^2+2{\rm i}\eta_d^{\rm i}(\xi_d^*-\eta_d^{\rm r})}\\
&=\frac1{|\xi'|^2-\omega^2t^2+(\xi_d^*-\eta_d^{\rm r})(\xi_d^*+\eta_d^{\rm r})+2{\rm i}\eta_d^{\rm i}(\xi_d^*-\eta_d^{\rm r})}\\
&=\frac1{|\xi'|^2-\omega^2t^2+(\xi_d-\eta_d^{\rm r})(\xi_d-3\eta_d^{\rm r})-2{\rm i}\eta_d^{\rm i}(\xi_d-\eta_d^{\rm r})},
\end{align*}
we get for $d=2$ that
\begin{align*}
h_2(\xi):&=|n_t(\xi)+n_t(\xi^*)J_{2,t}(\xi)|\\
&=\bigg|\frac1{|\xi^-|^2-\omega^2t^2+(\xi_d-\eta_d^{\rm r})(\xi_d+\eta_d^{\rm r})+2{\rm i}\eta_d^{\rm i}(\xi_d-\eta_d^{\rm r})}\\
&\quad +\frac1{|\xi'|^2-\omega^2t^2+(\xi_d-\eta_d^{\rm r})(\xi_d-3\eta_d^{\rm r})-2{\rm i}\eta_d^{\rm i}(\xi_d-\eta_d^{\rm r})}\bigg|\\
&=\frac{2(|\xi^-|-\omega t)^2+2(\xi_d-\eta_d^{\rm r})^2}{\left[((|\xi^-|-\omega t)(|\xi^-|+\omega t)+(\xi_d-\eta_d^{\rm r})(\xi_d+\eta_d^{\rm r}))^2+4(\eta_d^{\rm i})^2(\xi_d-\eta_d^{\rm r})^2\right]^{\frac12}}\\
&\quad \times\frac1{\left[((|\xi^-|-\omega t)(|\xi^-|-3\omega t)+(\xi_d-\eta_d^{\rm r})(\xi_d-3\eta_d^{\rm r}))^2+4(\eta_d^{\rm i})^2(\xi_d-\eta_d^{\rm r})^2\right]^{\frac12}},
\end{align*}
which is bounded
\[
h_2(\xi)\lesssim\frac1{\omega^2t^2},\quad\xi\in\Omega_{22}
\]
as $t\gg1$ according to the boundedness of $\xi\in\Omega_{22}$.
Similarly, it holds for $d=3$ and $t\gg1$ that
\begin{align*}
h_3(\xi):&=|n_t(\xi)+n_t(\xi^*)J_{3,t}(\xi)|\\
&=\bigg|\frac1{|\xi^-|^2-\omega^2t^2+(\xi_d-\eta_d^{\rm r})(\xi_d+\eta_d^{\rm r})+2{\rm i}\eta_d^{\rm i}(\xi_d-\eta_d^{\rm r})}\\
&\quad +\frac{\frac{2\omega t}{|\xi^-|}-1}{|\xi'|^2-\omega^2t^2+(\xi_d-\eta_d^{\rm r})(\xi_d-3\eta_d^{\rm r})-2{\rm i}\eta_d^{\rm i}(\xi_d-\eta_d^{\rm r})}\bigg|\\
&\lesssim\frac1{\omega^2t^2}.
\end{align*}
The above estimates lead to 
\[
|{\rm II}_{21}|\lesssim\frac1{\omega^2t^2}\int_{\Omega_{22}}(1+|\xi|^2)^s|\widehat{\mathcal J^{-s}f}(\xi)||\widehat{\mathcal J^{-s}g}(\xi)|d\xi
\lesssim\frac1{(\omega t)^{2-2s}}\|f\|_{H^{-s}(G)}\|g\|_{H^{-s}(G)}.
\]
For ${\rm II}_{22}$, we apply the mean value theorem and get for some $\theta\in(0,1)$ that
\begin{align*}
&\left|n_t(\xi^*)J_{d,t}(\xi)\left[(1+|\xi^*|^2)^s-(1+|\xi|^2)^s\right]\right|\\
&=\left|n_t(\xi^*)J_{d,t}(\xi)s\left(1+\theta|\xi^*|^2+(1-\theta)|\xi|^2\right)^{s-1}(|\xi^*|^2-|\xi|^2)\right|\\
&\lesssim\left|n_t(\xi^*)J_{d,t}(\xi)(|\xi^*|^2-|\xi|^2)\right|\left(1+\theta|\xi^*|^2+(1-\theta)|\xi|^2\right)^{s-1}\\
&\lesssim\left(1+\theta|\xi^*|^2+(1-\theta)|\xi|^2\right)^{s-1}
\lesssim\frac1{(\omega t)^{2-2s}},
\end{align*}
where in the third step we used the following estimate similar to $h_2(\xi)$: 
\begin{align}\label{eq:nt}
&\left|n_t(\xi^*)J_{d,t}(\xi)(|\xi^*|^2-|\xi|^2)\right|\notag\\
&=\left|\frac{\left(\frac{2\omega t}{|\xi^-|}-1\right)^{d-2}(|\xi^*|^2-|\xi|^2)}{|\xi'|^2-\omega^2t^2+(\xi_d-\eta_d^{\rm r})(\xi_d-3\eta_d^{\rm r})-2{\rm i}\eta_d^{\rm i}(\xi_d-\eta_d^{\rm r})}\right|\notag\\
&=\frac{\left(\frac{2\omega t}{|\xi^-|}-1\right)^{d-2}\big|4\omega t(|\xi^-|-\omega t)+4\eta_d^{\rm r}(\xi_d-\eta_d^{\rm r})\big|}{\left[((|\xi^-|-\omega t)(|\xi^-|-3\omega t)+(\xi_d-\eta_d^{\rm r})(\xi_d-3\eta_d^{\rm r}))^2+4(\eta_d^{\rm i})^2(\xi_d-\eta_d^{\rm r})^2\right]^{\frac12}}
\lesssim 1.
\end{align}  
Therefore 
\[
|{\rm II}_{22}|\lesssim\frac1{(\omega t)^{2-2s}}\int_{\Omega_{22}}|\widehat{\mathcal J^{-s}f}(\xi)||\widehat{\mathcal J^{-s}g}(\xi)|d\xi
\lesssim\frac1{(\omega t)^{2-2s}}\|f\|_{H^{-s}(G)}\|g\|_{H^{-s}(G)}.
\]
Terms ${\rm II}_{23}$ and ${\rm II}_{24}$ can be estimated similarly by following the procedure used in \cite[Theorem 3.2]{LW21}. In fact, it can be shown that the Bessel potential satisfies 
\begin{align*}
\big|\widehat{\mathcal J^{-s}f}(\xi^*)-\widehat{\mathcal J^{-s}f}(\xi)\big|
\lesssim&\big||\xi^*|-|\xi|\big|\Big[M(|\nabla\widehat{\mathcal{J}^{-s}f}|)(\xi^*)+M(|\nabla\widehat{\mathcal{J}^{-s}f}|)(\xi^*)\Big],
\end{align*}
where $M$ is the Hardy--Littlewood maximal function defined by
\[
M(f)(x)=\sup_{r>0}\frac1{|B(x,r)|}\int_{B(x,r)}|f(y)|dy
\]
with $B(x,r)$ being the ball of center $x$ and radius $r$,
and satisfies (cf. \cite[Theorem 3.2]{LW21})
\[
\|M(|\nabla\widehat{\mathcal{J}^{-s}f}|)\|_{L^2(\mathbb R^d)}\lesssim\|f\|_{H^{-s}(G)}.
\]
The above estimates, together with \eqref{eq:nt}, yield
\begin{align*}
|{\rm II}_{23}|&\lesssim\int_{\Omega_{22}}\frac{|n_t(\xi^*)J_{d,t}(|\xi^*|^2-|\xi|^2)|}{|\xi^*|+|\xi|}(1+|\xi^*|^2)^s\Big|M(|\nabla\widehat{\mathcal{J}^{-s}f}|)(\xi^*)+M(|\nabla\widehat{\mathcal{J}^{-s}f}|)(\xi^*)\Big||\widehat{\mathcal J^{-s}g}(\xi)|d\xi\\
&\lesssim\frac1{(\omega t)^{1-2s}}\|f\|_{H^{-s}(G)}\|g\|_{H^{-s}(G)}
\end{align*}
and
\begin{align*}
|{\rm II}_{24}|&\lesssim\int_{\Omega_{22}}\frac{|n_t(\xi^*)J_{d,t}(|\xi^*|^2-|\xi|^2)|}{|\xi^*|+|\xi|}(1+|\xi^*|^2)^s|\widehat{\mathcal J^{-s}f}(\xi^*)|\Big|M(|\nabla\widehat{\mathcal{J}^{-s}g}|)(\xi^*)+M(|\nabla\widehat{\mathcal{J}^{-s}g}|)(\xi^*)\Big|d\xi\\
&\lesssim\frac1{(\omega t)^{1-2s}}\|f\|_{H^{-s}(G)}\|g\|_{H^{-s}(G)}.
\end{align*}
Hence, ${\rm II}$ satisfies
\begin{align}\label{eq:2}
|{\rm II}|\lesssim\frac1{(\omega t)^{1-2s}}\|f\|_{H^{-s}(G)}\|g\|_{H^{-s}(G)}.
\end{align}

Combining \eqref{eq:1} and \eqref{eq:2}, we obtain the estimate of $\mathcal A$ and get 
\[
|\langle\mathcal G_\eta f,g\rangle|\lesssim\frac1{\omega^{3-2s}t^{1-2s}}\|f\|_{H^{-s}(G)}\|g\|_{H^{-s}(G)}
\]
for any $f,g\in C_0^\infty(G)$. Since  $C_0^\infty(G)$ is dense in $L^2(G)$ and $H^{-s}(G)\subset H^{-1}(G)=\overline{L^2(G)}^{\|\cdot\|_{H^{-1}(G)}}$ (cf. \cite[Sections 2.30, 3.13]{AF03}), the above result can be extended to $f,g\in H^{-s}(G)$ with $s\in(0,\frac12)$. Therefore we derive the following estimate for the operator $\mathcal G_\eta$ with $s\in(0,\frac12)$:
\begin{align}\label{eq:G1}
\|\mathcal G_\eta\|_{\mathcal L(H^{-s}(G),H^s(G))}\lesssim\frac1{\omega^{3-2s}t^{1-2s}}.
\end{align}

The second step is to estimate the operator $\mathcal G_\eta$ in Sobolev spaces and show $v\equiv0$ in $\mathbb R^d$. To extend the estimate of $\mathcal G_\eta$ from Hilbert spaces to Sobolev spaces, we claim that $\mathcal G_\eta:L^r(G)\to L^{r'}(G)$ is bounded and satisfies
\begin{align}\label{eq:G2}
\|\mathcal G_\eta\|_{\mathcal L(L^{r}(G),L^{r'}(G))}\lesssim1
\end{align}
for some proper $r$ and $r'$. In fact, it follows from the decomposition of the operator $\mathcal G_\eta$ given in \eqref{eq:Geta} that we may rewrite it as
\[
\mathcal G_\eta=\frac1{2\kappa^2}\left(\mathcal G_{\eta,1}-\mathcal G_{\eta,2}\right),
\]
where 
\[
\mathcal G_{\eta,1}(f)(x):=\mathcal F^{-1}\bigg[\frac{\hat{f}}{|\xi|^2+2\eta\cdot\xi}\bigg](x),\quad
\mathcal G_{\eta,2}(f)(x):=\mathcal F^{-1}\bigg[\frac{\hat{f}}{|\xi|^2+2\eta\cdot\xi+2\kappa^2}\bigg](x).
\]

Next we consider the cases $d=3$ and $d=2$, separately. 

For $d=3$, the claim \eqref{eq:G2} holds under the conditions
\[
\frac1r-\frac1{r'}=\frac2d,\quad\min\left\{\left|\frac1r-\frac12\right|,\left|\frac1{r'}-\frac12\right|\right\}>\frac1{2d},
\]
since operators $\mathcal G_{\eta,i}$, $i=1,2$, are both bounded from $L^r(G)$ to $L^{r'}(G)$ according to \cite[Theorem 2.2]{KRS87} and \cite[Proposition 2]{LPS08}. To deduce the estimate for $\mathcal G_\eta$ between the dual Sobolev spaces $W^{-\gamma,p}(G)$ and $W^{\gamma,q}(G)$ with $\frac1p+\frac1q=1$, we consider the interpolation of \eqref{eq:G1} and \eqref{eq:G2}. Noting
\begin{align*}
&[L^r(G),H^{-s}(G)]_\theta=W^{-\gamma,p}(G),\\
&[L^{r'}(G),H^s(G)]_\theta=W^{\gamma,q}(G)
\end{align*}
and choosing $\theta=1+(\frac1q-\frac12)d\in(0,1)$ and $r=\frac{2d}{d+2}$ such that $\gamma=\theta s<\frac12+(\frac1q-\frac12)\frac d2$, $\frac1p=\frac{1-\theta}r+\frac{\theta}{2}$ and $\frac1q=\frac{1-\theta}{r'}+\frac{\theta}2$, we obtain 
\begin{align}\label{eq:G3}
\|\mathcal G_\eta\|_{\mathcal L(W^{-\gamma,p}(G),W^{\gamma,q}(G))}\lesssim\frac1{\omega^{(3-2s)\theta}t^{(1-2s)\theta}}.
\end{align}
As is proved in \cite[Lemma 2]{LPS08}, $\rho v\in W^{-\gamma,p}(G)$ for any $v\in W^{\gamma,q}(G)$, where $\gamma$ is required to satisfy $\gamma<\frac12+(\frac1q-\frac12)\frac d2$. Hence an additional restriction on $q$ is also required due to $\gamma>\frac{d-m}2$, i.e., $q<\frac{2d}{3d-2m-2}$. 
Consequently, \eqref{eq:v} leads to
\[
\|v\|_{W^{\gamma,q}(G)}\le\|\mathcal G_\eta\|_{\mathcal L(W^{-\gamma,p}(G),W^{\gamma,q}(G))}\|\rho v\|_{W^{-\gamma,p}(G)}
\lesssim\frac1{\omega^{(3-2s)\theta}t^{(1-2s)\theta}}\|v\|_{W^{\gamma,q}(G)}
\]
with $s\in(0,\frac12)$, which implies $v\equiv0$ by choosing $t\gg1$.

For $d=2$, it is shown in \cite[Proposition 2]{LPS08} that \eqref{eq:G2} holds for any $r>1$. Similarly, \eqref{eq:G3} can be deduced from the interpolation between \eqref{eq:G1} and \eqref{eq:G2} by choosing $r=1+\epsilon$ with an arbitrary small parameter $\epsilon>0$ and $\theta=\frac{2(1+\epsilon)-2\epsilon q}{q(1-\epsilon)}$ such that $\gamma=\theta s<\frac{(1+\epsilon)-\epsilon q}{q(1-\epsilon)}$. Following the same procedure as the three-dimensional case and letting $\epsilon\to0$, we get $v\equiv0$ under the restrictions $\gamma<\frac1q=\frac12+(\frac1q-\frac12)\frac d2$ and $q<\frac2{2-m}=\frac{2d}{3d-2m-2}$.
\end{proof}

\begin{remark}
The unique continuation principle established in Theorem \ref{tm:unique} holds for any damping coefficient $\sigma\ge0$. If the medium is lossless with $\sigma=0$, the proof can be simplified by letting $\omega=k^{\frac12}$ and
\[
\eta=\left(k^{\frac12}t,0,\cdots,0,{\rm i}k^{\frac12}\sqrt{t^2-1}\right)^\top,\quad t\gg1. 
\] 
We refer to \cite{P04} for the unique continuation principle of the Schr\"odinger equation without damping. The unique continuation principle will be utilized to show the uniqueness of the solution to the direct scattering problem when $\sigma=0$ .  
\end{remark}

\section{The Lippmann--Schwinger equation}
\label{sec:4}

In this section, we examine the well-posedness of the scattering problem \eqref{eq:model}--\eqref{eq:radiation} by studying the equivalent Lippmann--Schwinger integral equation.

\subsection{Well-posedness}

Based on the integral operators, the scattering problem \eqref{eq:model}--\eqref{eq:radiation} can be written formally as the Lippmann--Schwinger equation
\begin{align}\label{eq:LS}
u=\mathcal K_ku+\mathcal H_k\delta_y=\mathcal K_ku+\Phi,
\end{align}
where the fundamental solution $\Phi$ is given in \eqref{eq:Phi}.

\begin{theorem}\label{tm:LS}
Let the random potential $\rho$ satisfy Assumption \ref{as:rho}. The Lippmann--Schwinger equation \eqref{eq:LS} has a unique solution in $W^{\gamma,q}_{loc}(\mathbb R^d)$ with $q\in(2,\frac{2d}{3d-2m-2})$ and $\gamma\in(\frac{d-m}2,\frac12+(\frac1q-\frac12)\frac d2)$.
\end{theorem}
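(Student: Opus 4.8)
The plan is to recast the Lippmann--Schwinger equation \eqref{eq:LS} as a Fredholm equation of the second kind on a bounded domain, reduce the well-posedness to the injectivity of $I-\mathcal K_k$, and then derive that injectivity either from a short energy identity (in the lossy case $\sigma>0$) or from the unique continuation principle of Theorem \ref{tm:unique} (in the lossless case $\sigma=0$).

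I would first fix a bounded domain $G$ with a locally Lipschitz boundary --- say, a large ball --- containing $\overline D$ in its interior. By Lemma \ref{lm:operatorK}(i), $\mathcal K_k\colon W^{\gamma,q}(G)\to W^{\gamma,q}(G)$ is compact for the prescribed range of $(\gamma,q)$, which lies in the range of Lemma \ref{lm:operatorH} and coincides with that of Theorem \ref{tm:unique}; and by Lemma \ref{lm:Phi}, $\Phi(\cdot,y,k)\in W^{\gamma,q}(G)$. Hence $(I-\mathcal K_k)u=\Phi$ is a Fredholm equation of the second kind in $W^{\gamma,q}(G)$, and by the Fredholm alternative existence and uniqueness in $W^{\gamma,q}(G)$ reduce to showing that $I-\mathcal K_k$ is injective. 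Given the solution $u\in W^{\gamma,q}(G)$, I would then put $u:=\mathcal H_k(\rho u)+\Phi$ on all of $\mathbb R^d$; since $\rho u$ is supported in $\overline D\subset G$ this is well defined, it restricts to the original $u$ on $G$, and by Lemma \ref{lm:Phi} (applied on arbitrary bounded subsets) it lies in $W^{\gamma,q}_{loc}(\mathbb R^d)$. Consistency of this extension and its independence of $G$ produce the $W^{\gamma,q}_{loc}(\mathbb R^d)$ solution, and uniqueness there follows from uniqueness in $W^{\gamma,q}(G)$ together with the extension formula.

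For the injectivity, suppose $v=\mathcal K_k v$ in $W^{\gamma,q}(G)$ and extend $v$ to $\mathbb R^d$ by $v=\mathcal H_k(\rho v)=\Phi*(\rho v)$; then $v\in W^{\gamma,q}_{loc}(\mathbb R^d)$ is a distributional solution of $\Delta^2v-(k^2+{\rm i}\sigma k)v+\rho v=0$ in $\mathbb R^d$ satisfying the radiation conditions \eqref{eq:radiation} (inherited from $\Phi$ and $\Delta\Phi$ through \eqref{eq:Phi}), and $\rho v\in W^{-\gamma,p}(\overline D)$ with $\tfrac1p+\tfrac1q=1$. When $\sigma>0$, the expression \eqref{eq:Phi} shows that $\Phi$, $\nabla\Phi$ and $\Delta\Phi$ decay exponentially, so $v$ and $\Delta v$ decay exponentially and lie in $L^2(\mathbb R^d)$ (the membership $\Delta v\in L^2(\mathbb R^d)$ using $\Delta^2v=(k^2+{\rm i}\sigma k)v-\rho v$). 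Pairing the equation with $\bar v$ and integrating by parts --- the boundary contributions at infinity vanishing by the exponential decay --- yields $\|\Delta v\|_{L^2(\mathbb R^d)}^2-(k^2+{\rm i}\sigma k)\|v\|_{L^2(\mathbb R^d)}^2+\langle\rho v,\bar v\rangle=0$; since $\rho$ is real the first and third terms are real, so taking imaginary parts gives $\sigma k\|v\|_{L^2(\mathbb R^d)}^2=0$, i.e. $v\equiv0$. This proves injectivity, hence the theorem, in the lossy case.

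The lossless case $\sigma=0$ carries the real difficulty, since then $\kappa^4=k^2$ is real and $v$ no longer lies in $L^2(\mathbb R^d)$, so the imaginary-part identity is unavailable. Here I would exploit the factorization $\Delta^2-\kappa^4=(\Delta+\kappa^2)(\Delta-\kappa^2)$ and the decomposition of $\Phi$ in \eqref{eq:Phi} into the radiating Helmholtz fundamental solution and the exponentially decaying modified-Helmholtz one. Setting $w:=(\Delta-\kappa^2)v$, one verifies that $w$ solves $(\Delta+\kappa^2)w=-\rho v$ in $\mathbb R^d$ and, outside $\overline D$, is a radiating solution of the Helmholtz equation; moreover the biharmonic Green identity on a ball $B_R$, after taking imaginary parts and using $\kappa^4\in\mathbb R$, gives $\Im\int_{\partial B_R}\bigl(\partial_r\Delta v\,\bar v-\Delta v\,\partial_r\bar v\bigr)\,ds=0$, which together with the two radiation conditions and the exponential decay of the modified-Helmholtz component forces $\int_{\partial B_R}|w|^2\,ds\to0$ as $R\to\infty$. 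Then Rellich's lemma and unique continuation for the Helmholtz equation yield $w=(\Delta-\kappa^2)v=0$ outside $\overline D$, so $w$ is compactly supported. The principal obstacle is the last step, namely upgrading this to $v\equiv0$ in $\mathbb R^d$ --- equivalently, showing that the homogeneous scattering problem \eqref{eq:model}--\eqref{eq:radiation} with zero source has only the trivial solution. I expect this to be done by combining the pair of relations $(\Delta+\kappa^2)w=-\rho v$ and $(\Delta-\kappa^2)v=w$ with the unique continuation principle of Theorem \ref{tm:unique} (applied to the compactly supported $w$, or to a suitable combination of $v$ and $w$) and the mapping estimates of Lemma \ref{lm:operatorK} for the rough potential. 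Once $v\equiv0$, injectivity of $I-\mathcal K_k$ is established and the proof is complete.
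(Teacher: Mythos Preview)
Your Fredholm setup and the treatment of the lossy case $\sigma>0$ are essentially the paper's argument: compactness of $\mathcal K_k$ from Lemma~\ref{lm:operatorK}, the Fredholm alternative, and the imaginary-part energy identity (the paper carries it out on balls $B_r$ rather than on all of $\mathbb R^d$, but the content is the same).

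The gap you flag in the lossless case is real, and your tentative fix --- applying Theorem~\ref{tm:unique} to the compactly supported $w=(\Delta-\kappa^2)v$ --- does not work, because $w$ satisfies the second-order equation $(\Delta+\kappa^2)w=-\rho v$, not the biharmonic equation, so Theorem~\ref{tm:unique} does not apply to it. The paper closes the gap differently: it introduces the full decomposition $v=u_H+u_M$ with $u_H=-\tfrac{1}{2\kappa^2}(\Delta v-\kappa^2 v)$ (your $-w/2\kappa^2$) and $u_M=\tfrac{1}{2\kappa^2}(\Delta v+\kappa^2 v)$, writes each as an explicit Fourier--Hankel/Macdonald series outside $B_R$ (equations \eqref{eq:expan1}--\eqref{eq:expan2}), and derives the boundary identity
\[
\Im\int_{\partial B_r}u_M\overline{\partial_\nu u_M}\,ds=\Im\int_{\partial B_r}u_H\overline{\partial_\nu u_H}\,ds
\]
by pairing the coupled equations $\Delta u_H+ku_H=\tfrac{1}{2k}\rho v$ and $\Delta u_M-ku_M=-\tfrac{1}{2k}\rho v$ with their conjugates. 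Combined with the radiation condition for $u_H$ and the exponential decay of $u_M$, this forces $\int_{\partial B_r}(|\partial_\nu u_H|^2+k|u_H|^2)\,ds\to 0$, and a Rellich-type argument then gives $u_H=u_M=0$ outside $\overline{B_R}$. The point is that this makes $v$ itself compactly supported, and now Theorem~\ref{tm:unique} applies directly to $v$ to conclude $v\equiv 0$. So the missing idea in your sketch is the second half of the Helmholtz/modified-Helmholtz splitting and the coupled boundary identity that lets you kill both pieces simultaneously; once you have that, the target of unique continuation is $v$, not $w$.
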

\begin{proof}
According to the compactness of the operator $\mathcal K_k$ proved in Lemma \ref{lm:operatorK} and the Fredholm alternative theorem, it suffices to show that the homogeneous equation
\begin{align}\label{eq:homo}
u=\mathcal K_k u
\end{align}
has only the trivial solution $u\equiv0$. 

Assume that $u^*$ is a solution to the homogeneous equation \eqref{eq:homo}. Then it satisfies the following equation in the distribution sense: 
\begin{equation}\label{eq:homor}
\Delta^2u^*-\kappa^4u^*+\rho u^*=0\quad\text{in}~\mathbb R^d.
\end{equation}
Let us consider two auxiliary functions 
\[
u_H:=-\frac1{2\kappa^2}(\Delta u^*-\kappa^2u^*),\quad u_M:=\frac1{2\kappa^2}(\Delta u^*+\kappa^2u^*).
\]
It is clear to note that $u^*=u_H+u_M$ and $\Delta u^*=\kappa^2(u_M-u_H)$.

Since $\rho$ is compactly supported in $D$, there exists a constant $R>0$ such that $D\subset B_R$ with $B_R$ being the open ball of radius $R$ centered at zero. It can be verified that $u_H$ and $u_M$ satisfy the homogeneous Helmholtz and modified Helmholtz equation with the wavenumber $\kappa$, respectively, in $\mathbb R^d\setminus\overline{B_R}$:
\[
\Delta u_H+\kappa^2u_H=0,\quad\Delta u_M-\kappa^2 u_M=0.
\]
Hence, $u_H$ and $u_M$ admit the following Fourier series expansions for any $r=|x|>R$:
\begin{align}\label{eq:expan1}
\begin{aligned}
u_H(r,\theta)&=\sum_{n=-\infty}^{\infty}\frac{H_n^{(1)}(\kappa r)}{H_n^{(1)}(\kappa R)}\hat u^{(n)}_H(R)e^{{\rm i}n\theta},\\
u_M(r,\theta)&=\sum_{n=-\infty}^{\infty}\frac{K_n(\kappa r)}{K_n(\kappa R)}\hat u^{(n)}_M(R)e^{{\rm i}n\theta}, 
\end{aligned}
\quad\text{if}\quad d=2,
\end{align}
where 
\[
\hat u^{(n)}_J(R)=\frac1{2\pi}\int_0^{2\pi}u_J(R,\theta)e^{-{\rm i}n\theta}d\theta, \quad J\in\{H, M\} 
\]
are the Fourier coefficients, and
\begin{align}\label{eq:expan2}
\begin{aligned}
u_H(r,\theta,\varphi)&=\sum_{n=0}^\infty\sum_{m=-n}^n\frac{h_n^{(1)}(\kappa r)}{h_n^{(1)}(\kappa R)}\hat u^{(m,n)}_{H}(R)Y_n^m(\theta,\varphi),\\
u_M(r,\theta,\varphi)&=\sum_{n=0}^\infty\sum_{m=-n}^n\frac{k_n(\kappa r)}{k_n(\kappa R)}\hat u_{M}^{(m,n)}(R)Y_n^m(\theta,\varphi),
\end{aligned}
\quad\text{if}\quad d=3,
\end{align}
where $h_n^{(1)}$ and $k_n$ are the spherical and modified spherical Hankel functions, respectively, satisfying
\[
h_n^{(1)}(z)=\sqrt{\frac{\pi}{2z}}H_{n+\frac12}^{(1)}(z),\quad k_n(z)=\sqrt{\frac{\pi}{2z}}K_{n+\frac12}(z),\quad z\in\mathbb C,
\]
$Y_n^m$ are the spherical harmonics of order $n$, and the Fourier coefficients $\hat u_{J}^{(m,n)}(R)$ are given by 
\[
\hat u_{J}^{(m,n)}(R)=\int_{\mathbb S^2}u_J(R,\theta,\varphi)\overline{Y_n^m(\theta,\varphi)}ds.
\]

If $\sigma>0$, then we have $\kappa_{\rm r}=\Re(\kappa)>0,\kappa_{\rm i}=\Im(\kappa)>0$. It follows from \eqref{eq:Hv}--\eqref{eq:Kv} and \eqref{eq:expan1}--\eqref{eq:expan2} that $u_H, u_M$ and thus $u^*, \Delta u^*$ decay exponentially as $r\to\infty$. Multiplying \eqref{eq:homor} by the complex conjugate of $u^*$, integrating over $B_r$, and applying Green's formula, we obtain 
\begin{align*}
\int_{B_r}\left(|\Delta u^*|^2-\kappa^4|u^*|^2+\rho|u^*|^2\right)dx=&\int_{\partial B_r}\big(\Delta u^* \overline{\partial_\nu u^*}-\overline{u^*}\partial_\nu\Delta u^*\big)ds,
\end{align*}
where $\nu$ is the unit outward normal vector to $\partial B_r$. Taking the imaginary part of the above equation yields
\[
-\Im(\kappa^4)\|u^*\|_{L^2(B_r)}^2=\Im\left[\int_{\partial B_r}\big(\Delta u^* \overline{\partial_\nu u^*}-\overline{u^*}\partial_\nu\Delta u^*\big)ds\right]\to0
\]
as $r\to\infty$ and hence $u^*\equiv0$ in $\mathbb R^d$.

If $\sigma=0$, then $\kappa=k^\frac12$ is real. By \eqref{eq:expan1}--\eqref{eq:expan2}, only $u_M|_{\partial B_r}$ and $\partial_\nu u_M|_{\partial B_r}$ decay exponentially as $r\to\infty$. It is easy to verify from \eqref{eq:homor} that $u_H$ and $u_M$ satisfy the following equations in $\mathbb R^d$: 
\[
 \Delta u_H+ku_H-\frac1{2k}\rho u^*=0,\quad\Delta u_M-ku_M+\frac1{2k}\rho u^*=0.
\]
Using the integration by parts and $u^*=u_H+u_M$, we have from Green's formula that 
\begin{align*}
\int_{\partial B_r}u_M \overline{\partial_\nu u_M}ds&=\int_{B_r}\Big(|\nabla u_M|^2+k|u_M|^2-\frac1{2k}\rho|u_M|^2-\frac1{2k}\rho u_M\overline{u_H}\Big)dx,\\
\int_{\partial B_r}u_H\overline{\partial_\nu u_H}ds&=\int_{B_r}\Big(|\nabla u_H|^2-k|u_H|^2+\frac1{2k}\rho|u_H|^2+\frac1{2k}\rho\overline{u_M}u_H\Big)dx,
\end{align*}
which are well-defined since $\nabla\Delta u^*\in L^2_{loc}(\mathbb R^d)$ due to $\Delta^2 u^*=k^2u^*-\rho u^*\in W^{\gamma,q}_{loc}(\mathbb R^d)+W^{-\gamma,p}(D)$ and \eqref{eq:rhophi}. Taking the imaginary parts of the above two equations yields 
\[
\Im\left[\int_{\partial B_r}u_M\overline{\partial_\nu u_M}ds\right]=\Im\left[\int_{\partial B_r}u_H\overline{\partial_\nu u_H}ds\right],
\]
which leads to
\begin{align*}
\int_{\partial B_r}\big(\left|\partial_\nu u_H\right|^2+k|u_H|^2\big)ds=\int_{\partial B_r}\left|\partial_\nu u_H-{\rm i}k^{\frac12}u_H\right|^2ds-2k^{\frac12}\Im\left[\int_{\partial B_r}u_M\overline{\partial_\nu u_M}ds\right].
\end{align*}
By the Sommerfeld radiation condition \eqref{eq:radiation}, the first integral on the right-hand side of the above equation tends to zero as $r\to\infty$. The second integral also tends to zero due to the exponential decay of $u_M$. Therefore,
\[
\lim_{r\to\infty}\int_{\partial B_r}\big(\left|\partial_\nu u_H\right|^2+k|u_H|^2\big)ds=\lim_{r\to\infty}\int_{\partial B_r}\big(\left|\partial_\nu u_M\right|^2+k|u_M|^2\big)ds=0.
\]
It follows from Rellich's lemma that $u_H=u_M=0$ in $\mathbb R^d\backslash\overline{B_R}$ and thus $u^*\equiv0$ in $\mathbb R^d\backslash\overline{B_R}$. The proof is completed by applying the unique continuation in Theorem \ref{tm:unique}.
\end{proof}

The well-posedness of the scattering problem \eqref{eq:model}--\eqref{eq:radiation} can be obtained by showing the equivalence to the Lippmann--Schwinger equation. The proof is similar to that of \cite[Theorem 3.5]{LW21} and is omitted here for brevity. 

\begin{corollary}
Under Assumption \ref{as:rho}, the scattering problem \eqref{eq:model}--\eqref{eq:radiation} is well-posed in the distribution sense and has a unique solution $u\in W^{\gamma,q}_{loc}(\mathbb R^d)$, where $q$ and $\gamma$ are given in Theorem \ref{tm:LS}.
\end{corollary}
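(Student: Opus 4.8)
The plan is to establish the equivalence between the scattering problem \eqref{eq:model}--\eqref{eq:radiation} and the Lippmann--Schwinger equation \eqref{eq:LS} on $W^{\gamma,q}_{loc}(\mathbb R^d)$, and then to read off existence, uniqueness, regularity and continuous dependence from Theorem \ref{tm:LS}. It is convenient to write $\Phi=\Phi_H+\Phi_M$, where $\Phi_H$ is a constant multiple of the outgoing fundamental solution of the Helmholtz operator $\Delta+\kappa^2$ and $\Phi_M$ is a constant multiple of the decaying fundamental solution of the modified Helmholtz operator $\Delta-\kappa^2$; by \eqref{eq:Phi}, \eqref{eq:Hv}--\eqref{eq:Kv} and the expansions \eqref{eq:expan1}--\eqref{eq:expan2}, both $\Phi_H$ and $\Delta\Phi_H$ satisfy the Sommerfeld radiation condition with wavenumber $\kappa$, while both $\Phi_M$ and $\Delta\Phi_M$ decay exponentially; in particular the pair $(\Phi,\Delta\Phi)$ satisfies \eqref{eq:radiation}. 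For any distribution $g$ supported in $D$, the same splitting shows that the volume potential $\mathcal H_kg=\Phi_H*g+\Phi_M*g$ and its Laplacian, which outside $D$ equals $-\kappa^2\Phi_H*g-\kappa^2\Phi_M*g$ since $(\Delta+\kappa^2)\Phi_H$ and $(\Delta-\kappa^2)\Phi_M$ are multiples of $\delta$, both satisfy \eqref{eq:radiation}.

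First I would show that any distributional solution $u\in W^{\gamma,q}_{loc}(\mathbb R^d)$ of \eqref{eq:model}--\eqref{eq:radiation} solves \eqref{eq:LS}. Since $\rho$ is supported in $D$ and $\gamma<\tfrac12$ throughout the admissible range, the product $\rho u$ belongs to $W^{-\gamma,p}(D)$ with $\tfrac1p+\tfrac1q=1$ and is compactly supported by \eqref{eq:rhophi}, so $\mathcal K_ku=\mathcal H_k(\rho u)$ is well defined. Applying $\Delta^2-\kappa^4$ to the convolutions and using $(\Delta^2-\kappa^4)\Phi=-\delta_y$ yields $(\Delta^2-\kappa^4)\mathcal K_ku=-\rho u$, hence the auxiliary function $\tilde u:=u-\Phi-\mathcal K_ku$ satisfies $\Delta^2\tilde u-\kappa^4\tilde u=0$ in all of $\mathbb R^d$ and is therefore smooth by interior elliptic regularity. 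By the preceding paragraph applied to $g=\rho u$, the pair $(\mathcal K_ku,\Delta\mathcal K_ku)$ satisfies \eqref{eq:radiation}; so does $(\Phi,\Delta\Phi)$ and, by hypothesis, $(u,\Delta u)$. By linearity $(\tilde u,\Delta\tilde u)$ satisfies \eqref{eq:radiation}, and decomposing $\tilde u=\tilde u_H+\tilde u_M$ into its Helmholtz and modified Helmholtz parts exactly as in the proof of Theorem \ref{tm:LS}, Rellich's lemma when $\sigma=0$, or the energy identity based on $\Im(\kappa^4)\ne0$ when $\sigma>0$, forces $\tilde u\equiv0$, i.e. \eqref{eq:LS} holds.

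Conversely, any $u\in W^{\gamma,q}_{loc}(\mathbb R^d)$ solving \eqref{eq:LS} is a distributional solution of \eqref{eq:model}: applying $\Delta^2-\kappa^4$ to \eqref{eq:LS} and using the two identities above gives $(\Delta^2-\kappa^4)u=-\delta_y-\rho u$; and since $(\Phi,\Delta\Phi)$ and $(\mathcal K_ku,\Delta\mathcal K_ku)$ satisfy \eqref{eq:radiation}, so does $(u,\Delta u)$. This proves the equivalence. Regularity and well-posedness now follow from Theorem \ref{tm:LS}: the latter provides a unique $u\in W^{\gamma,q}_{loc}(\mathbb R^d)$ with $u=\mathcal K_ku+\Phi$, where $\Phi(\cdot,y,k)\in W^{\gamma,q}_{loc}(\mathbb R^d)$ by Lemma \ref{lm:Phi} (the admissible range of $(q,\gamma)$ satisfies $\gamma\in[0,1]$ and $q\in(1,\tfrac2\gamma)$) and $\mathcal K_ku\in W^{\gamma,q}(G)$ on every bounded $G$ by Lemma \ref{lm:operatorK}(i); together with the bounded inverse $(I-\mathcal K_k)^{-1}$ supplied by the Fredholm alternative, the solution depends continuously on the data. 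By the equivalence this $u$ is the unique solution of \eqref{eq:model}--\eqref{eq:radiation} in $W^{\gamma,q}_{loc}(\mathbb R^d)$.

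The main obstacle is the low-regularity bookkeeping rather than any new idea. One must justify the distributional identity $(\Delta^2-\kappa^4)(\Phi*(\rho u))=-\rho u$ and the representation $\tilde u\equiv0$ when $u$ lies only in $W^{\gamma,q}_{loc}$ and $\rho$ is a rough Gaussian field, and, most delicately, track that the pieces $\Phi_H*(\rho u)$ and $\Phi_M*(\rho u)$ of the volume potential carry, respectively, the outgoing Sommerfeld behavior and the exponential decay for both the field and its Laplacian — this is precisely the point where the biharmonic radiation condition \eqref{eq:radiation} must be imposed on the pair $(u,\Delta u)$ rather than on $u$ alone. All of this runs parallel to \cite[Theorem 3.5]{LW21} and to the proof of Theorem \ref{tm:LS}, so no essentially new difficulty arises.
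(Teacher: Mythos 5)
Your proposal follows exactly the route the paper indicates: the paper omits the proof, stating only that it "can be obtained by showing the equivalence to the Lippmann--Schwinger equation" analogously to \cite[Theorem 3.5]{LW21}, and that is precisely what you do by splitting $\Phi=\Phi_H+\Phi_M$, verifying the radiation/decay properties of the volume potential and its Laplacian, and invoking Rellich's lemma (or the energy identity when $\sigma>0$) to kill $\tilde u=u-\Phi-\mathcal K_ku$. The only blemish is a sign slip in "$\Delta\mathcal H_kg=-\kappa^2\Phi_H*g-\kappa^2\Phi_M*g$" (it should be $+\kappa^2\Phi_M*g$), which does not affect the argument since only the exponential decay of $\Phi_M*g$ is used.
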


\subsection{Born series}

Based on the Lippmann--Schwinger equation \eqref{eq:LS}, we formally define the Born series
\[
\sum_{n=0}^\infty u_n(x,y,k), 
\]
where
\begin{equation}\label{borns}
u_{n}(x,y,k):=\mathcal K_k\left(u_{n-1}(\cdot,y,k)\right)(x)=\int_{\mathbb R^d}\Phi(x,z,k)\rho(z)u_{n-1}(z,y,k)dz,\quad n\ge1
\end{equation}
and $u_0(x,y,k):=\mathcal H_k(\delta_y )(x)=\Phi(x,y,k)$.

The Born series is crucial for the inverse scattering problem. It helps to establish the recovery formula for the strength $\mu$ of the random potential $\rho$. Before addressing the inverse problem, we study the convergence of the Born series.

\begin{lemma}\label{lm:Born}
There exists $k_0>0$ such that for any wavenumber $k\ge k_0$ and any fixed $x,y\in U$, the Born series converges to the solution of \eqref{eq:model}--\eqref{eq:radiation}, i.e.,
\[
u(x,y,k)=\sum_{n=0}^\infty u_n(x,y,k).
\]
\end{lemma}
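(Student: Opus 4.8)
The plan is to show that the Born series $\sum_{n\ge 0} u_n(\cdot,y,k)$ converges in $W^{\gamma,q}_{loc}(\mathbb R^d)$ (in fact in a fixed space $W^{\gamma,q}(G)$ for a bounded Lipschitz domain $G$ containing $D$ and the points $x,y$), and that its sum solves the Lippmann--Schwinger equation \eqref{eq:LS}; uniqueness of the solution from Theorem \ref{tm:LS} then forces the sum to equal $u(x,y,k)$. The key quantitative input is the operator norm bound from Lemma \ref{lm:operatorK}(i): for suitable $q\in(2,A)$ and $\gamma\in(\frac{d-m}{2},\frac32+(\frac1q-\frac12)d)$ we have $\|\mathcal K_k\|_{\mathcal L(W^{\gamma,q}(G))}\lesssim k^{\gamma+(\frac12-\frac1q)d-\frac32}$ $\mathbb P$-a.s. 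Since the admissible exponents allow $\gamma+(\frac12-\frac1q)d-\frac32<0$ (the range $\gamma<\frac12+(\frac1q-\frac12)\frac d2$ used in Theorems \ref{tm:unique}--\ref{tm:LS} is exactly what makes this exponent negative), there exist $\gamma,q$ and a constant $\beta>0$ with $\|\mathcal K_k\|_{\mathcal L(W^{\gamma,q}(G))}\lesssim k^{-\beta}$. Hence one can pick $k_0>0$ so large that $\|\mathcal K_k\|_{\mathcal L(W^{\gamma,q}(G))}\le \frac12$ for all $k\ge k_0$, $\mathbb P$-a.s.

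The steps, in order, are as follows. First, fix a bounded Lipschitz domain $G$ with $D\subset G$ and $x,y\in G$, and choose the exponents $\gamma,q$ as above so that $\theta_k:=\|\mathcal K_k\|_{\mathcal L(W^{\gamma,q}(G))}\le\frac12$ for $k\ge k_0$. Second, recall from Lemma \ref{lm:Phi} that $u_0=\Phi(\cdot,y,k)\in W^{\gamma,q}(G)$ with a finite norm (one may also invoke Lemma \ref{lm:operatorH} applied to $\delta_y\in H^{-s}(B)$ for $s>d/2$, $B$ a small ball around $y$, to see $\mathcal H_k\delta_y\in W^{\gamma,q}(G)$; either route gives $u_0\in W^{\gamma,q}(G)$). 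Third, observe inductively that $u_n=\mathcal K_k u_{n-1}\in W^{\gamma,q}(G)$ with $\|u_n\|_{W^{\gamma,q}(G)}\le \theta_k^n\|u_0\|_{W^{\gamma,q}(G)}$, so the series $\sum_n\|u_n\|_{W^{\gamma,q}(G)}\le (1-\theta_k)^{-1}\|u_0\|_{W^{\gamma,q}(G)}<\infty$ converges; let $\tilde u:=\sum_{n\ge0}u_n\in W^{\gamma,q}(G)$ be its (absolutely convergent) sum. Fourth, since $\mathcal K_k$ is bounded (indeed continuous) on $W^{\gamma,q}(G)$, apply it term by term: $\mathcal K_k\tilde u=\sum_{n\ge0}\mathcal K_k u_n=\sum_{n\ge1}u_n=\tilde u-u_0$, i.e. $\tilde u=\mathcal K_k\tilde u+\Phi$, which is precisely \eqref{eq:LS}. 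Fifth, by the uniqueness assertion of Theorem \ref{tm:LS}, $\tilde u=u(\cdot,y,k)$ in $W^{\gamma,q}_{loc}(\mathbb R^d)$; evaluating at the point $x\in U$ (legitimate because, by Lemma \ref{lm:operatorK}(ii), $u$ and each $u_n$ actually lie in $L^\infty(G)$, the near-field point $x$ having positive distance to $D$) gives $u(x,y,k)=\sum_{n\ge0}u_n(x,y,k)$.

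The main obstacle is bookkeeping rather than a deep difficulty: one must verify that the admissible window for $(\gamma,q)$ is nonempty \emph{and} simultaneously yields a negative exponent $\gamma+(\frac12-\frac1q)d-\frac32<0$, and that this window is compatible with the hypotheses under which $\mathcal K_k$ is compact and under which Theorem \ref{tm:LS} gives uniqueness. Since $\gamma>\frac{d-m}{2}$ is forced, the negativity condition reads $\frac{d-m}{2}+(\frac12-\frac1q)d<\frac32$, i.e. $q$ bounded above by a quantity that, using $m>d-1$, exceeds $2$; this is exactly the bound $q<\frac{2d}{3d-2m-2}$ already appearing in Theorems \ref{tm:unique}--\ref{tm:LS}, so no new restriction is introduced. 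A secondary point to handle with a little care is pointwise evaluation: $W^{\gamma,q}$ need not embed into continuous functions for the small $\gamma$ used here, so one should justify the value $u_n(x,y,k)$ via the $L^\infty(G)$ estimate of Lemma \ref{lm:operatorK}(ii) (valid because $x$ stays away from $\mathrm{supp}\,\rho$), and note that the $L^\infty$ bounds also decay geometrically in $n$, so the pointwise series converges and represents $u(x,y,k)$. All constants here are $\mathbb P$-a.s. finite because the relevant norm bounds in Lemmas \ref{lm:operatorH}--\ref{lm:operatorK} hold $\mathbb P$-a.s.; hence $k_0$ may be taken deterministic after fixing a full-probability event, or simply random, which suffices for the statement.
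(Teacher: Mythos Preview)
Your argument is correct and follows essentially the same route as the paper: the paper also obtains convergence via the contraction estimate $\|\mathcal K_k\|\lesssim k^{-\beta}$ for large $k$ (it simply refers to \cite[Section 4.2]{LLW} for this Neumann-series step) and then establishes the pointwise identity by the $H^s\to L^\infty$ bound of Lemma~\ref{lm:operatorK}(ii), which is the estimate \eqref{eq:resi}. Two small remarks: your parenthetical claim that the negativity condition on the exponent coincides \emph{exactly} with the bound $q<\frac{2d}{3d-2m-2}$ from Theorems~\ref{tm:unique}--\ref{tm:LS} is a slip---the negativity condition is the weaker $q<\frac{2d}{2d-m-3}$ (or no constraint when $2d-m-3\le 0$), so the Theorem~\ref{tm:LS} range is merely a sufficient subset; and once $\|\mathcal K_k\|<1$ you already have uniqueness of the Lippmann--Schwinger solution by the contraction itself, so invoking Theorem~\ref{tm:LS} is unnecessary (though harmless).
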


\begin{proof}
The convergence of the Born series to the solution of \eqref{eq:model}--\eqref{eq:radiation} can be obtained by employing the same procedure as that in \cite[Section 4.2]{LLW} and the estimate of $u_0(x,y,k)=\Phi(x,y,k)$ given in Lemma \ref{lm:Phi}.

Moreover, the Born series admits the pointwise convergence. Using the estimates of $\mathcal H_k$ and $\mathcal K_k$ given in Lemmas \ref{lm:operatorH} and \ref{lm:operatorK}, we get for any $s\in(\frac{d-m}2,\frac32)$ that
\begin{align}\label{eq:resi}
&\Big\|u(\cdot,y,k)-\sum_{n=0}^Nu_{n}(\cdot,y,k)\Big\|_{L^\infty(U)}
\lesssim\sum_{n=N+1}^\infty\|\mathcal K_k^n\left(u_0(\cdot,y,k)\right)\|_{L^\infty(U)}\notag\\
&\lesssim\sum_{n=N+1}^\infty\|\mathcal K_k\|_{\mathcal L(H^s(U),L^\infty(U))}\|\mathcal K_k\|_{\mathcal L(H^s(U))}^{n-2}\|\mathcal H_k\|_{\mathcal L(H^{-s}(D),H^s(U))}\|\rho\Phi(\cdot,y,k)\|_{H^{-s}(D)}\notag\\
&\lesssim\sum_{n=N+1}^\infty k^{\frac{2s+d-6+\epsilon}4}k^{(s-\frac32)(n-2)}k^{s-\frac32}\|\Phi(\cdot,y,k)\|_{H^{s}(D)}\notag\\
&\lesssim k^{\frac{2s+d-6+\epsilon}4+(s-\frac32)N+\frac{d-7}4+\frac s2}\to0
\end{align}
as $N\to\infty$ for any $k\ge k_0$ and $\epsilon>0$, where we used \eqref{eq:K} and Lemma \ref{lm:Phi}.
\end{proof}

\section{The inverse scattering problem}
\label{sec:inverse}

This section is devoted to the inverse scattering problem, which is to determine the strength $\mu$ of the random potential $\rho$. More specifically, the point source is assumed to be located at $y=x$, where $x\in U$ is the observation point and $U$ is the measurement domain having a positive distance to the support $D$ of the random potential. Hence only the backscattering data is utilized for the inverse problem. For simplicity, we use the notation $u_n(x,k):=u_n(x,x,k)$. The scattered field, denoted by $u^s$, has the form
\[
u^s(x,k)=u(x,k)-u_0(x,k)=\sum_{n=1}^\infty u_n(x,k)
\]
for $k\ge k_0$ with $k_0$ being given in Lemma \ref{lm:Born}.

Next we analyze the contribution of each term in the Born series in order to deduce the reconstruction formula and achieve the uniqueness of the inverse problem.

\subsection{The analysis of $u_1$}
\label{sec:5.1}

Based on the definitions of the Born sequence \eqref{borns} and the incident field $u_0$, the leading term $u_1$ can be expressed as
\begin{equation}\label{u1}
u_1(x,k)=\mathcal K_k(u_0(\cdot,x,k))(x)=\int_{\mathbb R^d}\Phi(x,z,k)^2\rho(z)dz.
\end{equation}
Since the fundamental solutions take different forms, the contribution of $u_1$ is discussed for the three- and two-dimensional cases, separately.

\subsubsection{The three-dimensional case}

By Assumption \ref{as:rho}, we have $m\in(2,3]$ for $d=3$. Substituting the fundamental solution 
\[
\Phi(x,z,k)=-\frac1{8\pi \kappa^2|x-z|}\big(e^{{\rm i}\kappa|x-z|}-e^{-\kappa|x-z|}\big)
\] 
into \eqref{u1} gives 
\begin{align*}
\mathbb E|u_1(x,k)|^2&=\frac{1}{(8\pi|\kappa|^2)^4}\int_{\mathbb R^3}\int_{\mathbb R^3}\left(\frac{e^{{\rm i}\kappa|x-z|}-e^{-\kappa|x-z|}}{|x-z|}\right)^2\left(\overline{\frac{e^{{\rm i}\kappa|x-z'|}-e^{-\kappa|x-z'|}}{|x-z'|}}\right)^2\mathbb E[\rho(z)\rho(z')]dzdz'\\
&=\frac{1}{(8\pi|\kappa|^2)^4}\int_{\mathbb R^3}\int_{\mathbb R^3}\frac{e^{2{\rm i}(\kappa|x-z|-\overline{\kappa}|x-z'|)}}{|x-z|^2|x-z'|^2}\mathbb E[\rho(z)\rho(z')]dzdz'\\
&\quad -\frac{2}{(8\pi|\kappa|^2)^4}\int_{\mathbb R^3}\int_{\mathbb R^3}\frac{e^{2{\rm i}\kappa|x-z|-({\rm i}+1)\overline{\kappa}|x-z'|}}{|x-z|^2|x-z'|^2}\mathbb E[\rho(z)\rho(z')]dzdz'\\
&\quad +\frac{1}{(8\pi|\kappa|^2)^4}\int_{\mathbb R^3}\int_{\mathbb R^3}\frac{e^{2{\rm i}\kappa|x-z|-2\overline{\kappa}|x-z'|}}{|x-z|^2|x-z'|^2}\mathbb E[\rho(z)\rho(z')]dzdz'\\
&\quad -\frac{2}{(8\pi|\kappa|^2)^4}\int_{\mathbb R^3}\int_{\mathbb R^3}\frac{e^{({\rm i}-1)\kappa|x-z|-2{\rm i}\overline{\kappa}|x-z'|}}{|x-z|^2|x-z'|^2}\mathbb E[\rho(z)\rho(z')]dzdz'\\
&\quad +\frac{4}{(8\pi|\kappa|^2)^4}\int_{\mathbb R^3}\int_{\mathbb R^3}\frac{e^{({\rm i}-1)\kappa|x-z|-({\rm i}+1)\overline{\kappa}|x-z'|}}{|x-z|^2|x-z'|^2}\mathbb E[\rho(z)\rho(z')]dzdz'\\
&\quad -\frac{2}{(8\pi|\kappa|^2)^4}\int_{\mathbb R^3}\int_{\mathbb R^3}\frac{e^{({\rm i}-1)\kappa|x-z|-2\overline{\kappa}|x-z'|}}{|x-z|^2|x-z'|^2}\mathbb E[\rho(z)\rho(z')]dzdz'\\
&\quad +\frac{1}{(8\pi|\kappa|^2)^4}\int_{\mathbb R^3}\int_{\mathbb R^3}\frac{e^{-2\kappa|x-z|-2{\rm i}\overline{\kappa}|x-z'|}}{|x-z|^2|x-z'|^2}\mathbb E[\rho(z)\rho(z')]dzdz'\\
&\quad -\frac{2}{(8\pi|\kappa|^2)^4}\int_{\mathbb R^3}\int_{\mathbb R^3}\frac{e^{-2\kappa|x-z|-({\rm i}+1)\overline{\kappa}|x-z'|}}{|x-z|^2|x-z'|^2}\mathbb E[\rho(z)\rho(z')]dzdz'\\
&\quad +\frac{1}{(8\pi|\kappa|^2)^4}\int_{\mathbb R^3}\int_{\mathbb R^3}\frac{e^{-2\kappa|x-z|-2\overline{\kappa}|x-z'|}}{|x-z|^2|x-z'|^2}\mathbb E[\rho(z)\rho(z')]dzdz'\\
&=\frac{1}{(8\pi|\kappa|^2)^4}\int_{\mathbb R^3}\int_{\mathbb R^3}\frac{e^{2{\rm i}(\kappa|x-z|-\overline{\kappa}|x-z'|)}}{|x-z|^2|x-z'|^2}\mathbb E[\rho(z)\rho(z')]dzdz'\\
&\quad -\frac{4}{(8\pi|\kappa|^2)^4}\Re\int_{\mathbb R^3}\int_{\mathbb R^3}\frac{e^{2{\rm i}\kappa|x-z|-({\rm i}+1)\overline{\kappa}|x-z'|}}{|x-z|^2|x-z'|^2}\mathbb E[\rho(z)\rho(z')]dzdz'\\
&\quad +\frac{2}{(8\pi|\kappa|^2)^4}\Re\int_{\mathbb R^3}\int_{\mathbb R^3}\frac{e^{2{\rm i}\kappa|x-z|-2\overline{\kappa}|x-z'|}}{|x-z|^2|x-z'|^2}\mathbb E[\rho(z)\rho(z')]dzdz'\\
&\quad +\frac{4}{(8\pi|\kappa|^2)^4}\int_{\mathbb R^3}\int_{\mathbb R^3}\frac{e^{({\rm i}-1)\kappa|x-z|-({\rm i}+1)\overline{\kappa}|x-z'|}}{|x-z|^2|x-z'|^2}\mathbb E[\rho(z)\rho(z')]dzdz'\\
&\quad -\frac{4}{(8\pi|\kappa|^2)^4}\Re\int_{\mathbb R^3}\int_{\mathbb R^3}\frac{e^{({\rm i}-1)\kappa|x-z|-2\overline{\kappa}|x-z'|}}{|x-z|^2|x-z'|^2}\mathbb E[\rho(z)\rho(z')]dzdz'\\
&\quad +\frac{1}{(8\pi|\kappa|^2)^4}\int_{\mathbb R^3}\int_{\mathbb R^3}\frac{e^{-2\kappa|x-z|-2\overline{\kappa}|x-z'|}}{|x-z|^2|x-z'|^2}\mathbb E[\rho(z)\rho(z')]dzdz'\\
&=:{\rm I}_1+{\rm I}_2+{\rm I}_3+{\rm I}_4+{\rm I}_5.
\end{align*}

For ${\rm I}_1$, following the procedure used in \cite[Theorem 4.5]{LW}, we get
\begin{align*}
|{\rm I}_1|&=\frac{1}{(8\pi|\kappa|^2)^4}\bigg[\int_{D}\frac{e^{-4\kappa_{\rm i}|x-z|}}{|x-z|^4}\mu(z)dz\kappa_{\rm r}^{-m}+O\left(\kappa_{\rm r}^{-m-1}\right)\bigg]\\
&=\frac{\kappa_{\rm r}^{-m}}{(8\pi|\kappa|^2)^4}\int_{D}\frac{e^{-4\kappa_{\rm i}|x-z|}}{|x-z|^4}\mu(z)dz+O\left(\kappa_{\rm r}^{-m-9}\right). 
\end{align*} 

The other terms can be estimated by utilizing the exponential decay of the integrants with respect to $\kappa_{\rm r}$. 
Since the estimates are analogous, we only show the detail for ${\rm I}_2$. Note that $|x-z|$ is bounded below and above for any $x\in U$ and $z\in D$. A simple calculation yields 
\begin{align*}
{\rm I}_2=\frac{4}{(8\pi|\kappa|^2)^4}\Re\int_{D}\int_{D}\frac{e^{{\rm i}(2\kappa_{\rm r}|x-z|+(\kappa_{\rm i}-\kappa_{\rm r})|x-z'|)}e^{-2\kappa_{\rm i}|x-z|-(\kappa_{\rm r}+\kappa_{\rm i})|x-z'|}}{|x-z|^2|x-z'|^2}\mathbb E[\rho(z)\rho(z')]dzdz',
\end{align*}
where
\[
e^{-2\kappa_{\rm i}|x-z|-(\kappa_{\rm r}+\kappa_{\rm i})|x-z'|}\lesssim\kappa_{\rm r}^{-M}
\]
for any $M>0$ as $\kappa_{\rm r}\to\infty$. Choosing $M=m+1$ gives 
\begin{align*}
|{\rm I}_2|\lesssim|\kappa|^{-8}\kappa_{\rm r}^{-m-1}\int_D\int_D|\mathbb E[\rho(z)\rho(z')]|dzdz'\lesssim\kappa_{\rm r}^{-m-9}\quad\forall~x\in U,
\end{align*}
where we used the equivalence between $|\kappa|$ and $\kappa_{\rm r}$ as $\kappa_{\rm r}\to\infty$ and the following expression (up to a constant) of the leading term for the kernel $\mathbb E[\rho(z)\rho(z')]$ (cf. \cite[Lemma 2.4]{LW2}) with $d=2,3$:
\begin{equation}\label{eq:kernel}
\mathbb E[\rho(z)\rho(z')]\sim\left\{
\begin{aligned}
&\mu(z)\ln|z-z'|,\quad &&m=d,\\
&\mu(z)|z-z'|^{m-d},\quad &&m\in(d-1,d).
\end{aligned}
\right.
\end{equation}
Terms ${\rm I}_3,{\rm I}_4$ and ${\rm I}_5$ can be estimated similarly. Hence we obtain 
\begin{align}\label{eq:u1_3d}
\mathbb E|u_1(x,k)|^2=\frac{\kappa_{\rm r}^{-m}}{(8\pi|\kappa|^2)^4}\int_{D}\frac{e^{-4\kappa_{\rm i}|x-z|}}{|x-z|^4}\mu(z)dz+O\left(\kappa_{\rm r}^{-m-9}\right)\quad\forall~x\in U.
\end{align}

\subsubsection{The two-dimensional case}

Now let us consider the two-dimensional problem where $d=2$ and $m\in(1,2]$. The fundamental solution $\Phi$ has the asymptotic expansion (cf. \cite{AS92,LW2})
\[
\Phi(x,z,k)=-\sum_{j=0}^\infty\frac{C_j}{8\kappa^2(\kappa|x-z|)^{j+\frac12}}\big({\rm i}e^{{\rm i}\kappa|x-z|}-{\rm i}^{-j+\frac12}e^{-\kappa|x-z|}\big), 
\]
where $C_0=1$ and 
\[
C_j=\sqrt{\frac2\pi}\frac{8^{-j}}{j!}\prod\limits_{l=1}^j(2l-1)^2e^{-\frac{\rm i\pi}4},\quad j\ge1.
\] 

Define the truncations of $\Phi$ and $u_1$, respectively, as follows
\begin{align*}
\Phi_N(x,z,k):&=-\sum_{j=0}^N\frac{C_j}{8\kappa^2(\kappa|x-z|)^{j+\frac12}}\big({\rm i}e^{{\rm i}\kappa|x-z|}-{\rm i}^{-j+\frac12}e^{-\kappa|x-z|}\big),\\
u_1^{(N)}(x,k):&=\int_{\mathbb R^2}\Phi_N(x,z,k)^2\rho(z)dz,
\end{align*}
where 
\[
|\Phi(x,z,k)|\lesssim|\kappa|^{-\frac52}|x-z|^{-\frac12},\quad|\Phi_N(x,z,k)|\lesssim|\kappa|^{-\frac52}|x-z|^{-\frac12}
\]
and
\begin{align}\label{eq:PhiN}
\Phi(x,z,k)-\Phi_N(x,z,k)=O\big(|\kappa|^{-N-\frac72}|x-z|^{-N-\frac32}\big)
\end{align}
for any $N\in\mathbb N$ as $|\kappa||x-z|\to\infty$. The following lemma gives the truncation error of the fundamental solution. 

\begin{lemma}\label{lm:trun}
For any fixed $x\in U$, $N\in\mathbb N$, $\gamma\in[0,1]$ and $q>1$, it holds
\begin{align}\label{eq:trun1}
\|\Phi(x,\cdot,k)-\Phi_N(x,\cdot,k)\|_{W^{\gamma,q}(D)}\lesssim|\kappa|^{-N-\frac72+\gamma}.
\end{align}
In particular, for $N=0$ and $\tilde q\in(1,\frac43)$, it holds
\begin{align}\label{eq:trun2}
\|\Phi(\cdot,\cdot,k)-\Phi_0(\cdot,\cdot,k)\|_{W^{\gamma,\tilde q}(D\times D)}\lesssim|\kappa|^{-\frac72+\gamma}.
\end{align}
\end{lemma}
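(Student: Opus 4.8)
The plan is to prove the two estimates \eqref{eq:trun1} and \eqref{eq:trun2} by reducing everything to scalar radial integrals, exactly as in the proof of Lemma \ref{lm:Phi}, and then exploit the explicit asymptotic decay rate \eqref{eq:PhiN} of the remainder $\Phi-\Phi_N$. The key point is that for fixed $x\in U$ and $z$ ranging over the compactly supported domain $D$, the quantity $|x-z|$ stays bounded away from zero and from infinity, so $|\kappa||x-z|\to\infty$ is equivalent to $|\kappa|\to\infty$ and the asymptotic expansions are uniform in $z\in D$. Hence the estimate \eqref{eq:PhiN}, together with the analogous estimate for the first-order derivatives obtained by termwise differentiation of the series (each $\partial_{z_i}$ brings down a factor $\pm\kappa\frac{z_i-x_i}{|x-z|}$ acting on the exponential plus lower-order terms, so one extra power of $|\kappa|$ in the worst case), gives pointwise bounds $|\Phi(x,z,k)-\Phi_N(x,z,k)|\lesssim|\kappa|^{-N-7/2}$ and $|\nabla_z(\Phi(x,z,k)-\Phi_N(x,z,k))|\lesssim|\kappa|^{-N-5/2}$ uniformly for $z\in D$.

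First I would fix $x\in U$ and write $r:=|x-z|$; since $r\in[r_0,r^*]$ with $0<r_0\le r^*<\infty$, the bound \eqref{eq:PhiN} yields directly
\[
\|\Phi(x,\cdot,k)-\Phi_N(x,\cdot,k)\|_{L^q(D)}^q=\int_D\big|\Phi(x,z,k)-\Phi_N(x,z,k)\big|^q\,dz\lesssim|\kappa|^{-(N+7/2)q}|D|,
\]
so the $L^q(D)$ norm is $\lesssim|\kappa|^{-N-7/2}$. Differentiating the series for $\Phi$ and $\Phi_N$ term by term with respect to $z$ and keeping track of the extra $\kappa$ factor shows $\|\nabla_z(\Phi(x,\cdot,k)-\Phi_N(x,\cdot,k))\|_{L^q(D)}\lesssim|\kappa|^{-N-5/2}$, hence $\|\Phi(x,\cdot,k)-\Phi_N(x,\cdot,k)\|_{W^{1,q}(D)}\lesssim|\kappa|^{-N-5/2}$. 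Then the interpolation inequality $\|f\|_{W^{\gamma,q}(D)}\lesssim\|f\|_{L^q(D)}^{1-\gamma}\|f\|_{W^{1,q}(D)}^{\gamma}$ (valid since $D$ has a locally Lipschitz boundary, cf. \cite[Theorem 6.4.5]{BL76} or \cite[Section 9.69]{AF03}, as already used in Lemma \ref{lm:Phi}) gives
\[
\|\Phi(x,\cdot,k)-\Phi_N(x,\cdot,k)\|_{W^{\gamma,q}(D)}\lesssim|\kappa|^{(-N-7/2)(1-\gamma)+(-N-5/2)\gamma}=|\kappa|^{-N-7/2+\gamma},
\]
which is precisely \eqref{eq:trun1}.

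For \eqref{eq:trun2} the situation is slightly different because one integrates over the product domain $D\times D$, and now the diagonal $z=z'$ is not excluded — when $z=z'$ and $z\in\partial(\mathrm{supp}\,\mu)$ or rather when $x$ is allowed to coincide with $z$ the factor $|x-z|$ can still be taken bounded below since $x\in U$ and $U$ has positive distance to $D$, so actually the only new feature is that the variable $y\in D$ (the source point) may come arbitrarily close to the field point, but here both arguments lie in $D$ so we need to watch $|z-z'|$. Looking again at \eqref{eq:PhiN} with $N=0$: the remainder is $O(|\kappa|^{-7/2}|x-z|^{-3/2})$, and since in \eqref{eq:trun2} the first argument also ranges over $D$ we must instead use the full small-argument behavior, i.e. near the diagonal $\Phi-\Phi_0$ behaves no worse than $\Phi$ itself, which is $\lesssim|\kappa|^{-5/2}|x-z|^{-1/2}$ pointwise (and the logarithmic/power behavior of $\Phi$ as $|x-z|\to0$). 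Thus $|\Phi(z,z',k)-\Phi_0(z,z',k)|\lesssim|\kappa|^{-7/2}|z-z'|^{-3/2}$ away from the diagonal and $\lesssim|\kappa|^{-5/2}(|z-z'|^{-1/2}+|\log|z-z'||)$ near it; either way, raising to the power $\tilde q\in(1,\tfrac43)$ gives an integrable singularity on $D\times D$ because $\tfrac{3}{2}\tilde q<2$, so $\int_{D\times D}|\Phi-\Phi_0|^{\tilde q}\lesssim|\kappa|^{-(7/2)\tilde q}$; the derivative estimate and interpolation then proceed exactly as before, giving \eqref{eq:trun2}.

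The main obstacle — the only genuinely delicate point — is the diagonal integrability in \eqref{eq:trun2}: one must verify that the singularity of $\Phi-\Phi_0$ (and of its $z$-gradient) along $\{z=z'\}$ is mild enough that the $W^{\gamma,\tilde q}(D\times D)$ norm is finite, which is exactly why the restriction $\tilde q\in(1,\tfrac43)$ appears; the exponent $\tfrac43$ is the threshold at which $\tfrac32\tilde q=2$, the borderline of local integrability in dimension $2$ for the $|z-z'|^{-3/2}$-type singularity coming from the first correction term in the asymptotic expansion. Everything else — the uniform-in-$z$ asymptotics, the termwise differentiation, and the interpolation — is routine and mirrors the proof of Lemma \ref{lm:Phi}.
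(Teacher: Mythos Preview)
Your proposal is correct and follows essentially the same route as the paper: pointwise bounds on $\Phi-\Phi_N$ and its gradient, then interpolation between $L^q$ and $W^{1,q}$. The paper's proof is terser---it simply writes down $\|\Phi-\Phi_0\|_{L^{\tilde q}(D\times D)}\lesssim|\kappa|^{-7/2}\bigl(\int_D\int_D|z-z'|^{-\frac32\tilde q}\,dz\,dz'\bigr)^{1/\tilde q}$ and the analogous $W^{1,\tilde q}$ bound without commenting on the diagonal---whereas you take the extra step of checking that the asymptotic remainder estimate \eqref{eq:PhiN}, stated only for $|\kappa||z-z'|\to\infty$, can in fact be used globally on $D\times D$; this is a legitimate point and your observation that on $\{|\kappa||z-z'|\le1\}$ the difference is controlled by $\Phi_0$ itself (hence by $|\kappa|^{-5/2}|z-z'|^{-1/2}\le|\kappa|^{-7/2}|z-z'|^{-3/2}$ there) fills that small gap. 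The only cosmetic issue is the passage where you momentarily speak of ``$x\in U$'' while both arguments of $\Phi$ in \eqref{eq:trun2} range over $D$; tightening that sentence would make the write-up cleaner.
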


\begin{proof}
Using \eqref{eq:PhiN} and 
\[
\left|\nabla_z\left(\Phi(x,z,k)-\Phi_N(x,z,k)\right)\right|=O\big(|\kappa|^{-N-\frac52}|x-z|^{-N-\frac32}\big),
\]
we get 
\begin{align*}
&\|\Phi(x,\cdot,k)-\Phi_N(x,\cdot,k)\|_{L^q(D)} \lesssim|\kappa|^{-N-\frac72},\\
&\|\Phi(x,\cdot,k)-\Phi_N(x,\cdot,k)\|_{W^{1,q}(D)} \lesssim|\kappa|^{-N-\frac52}.
\end{align*}
Then \eqref{eq:trun1} follows from the space interpolation $[L^q(D),W^{1,q}(D)]_\gamma=W^{\gamma,q}(D)$.

Similarly, \eqref{eq:trun2} can be obtained by noting that
\[
\|\Phi(\cdot,\cdot,k)-\Phi_0(\cdot,\cdot,k)\|_{L^{\tilde q}(D\times D)}\lesssim|\kappa|^{-\frac72}\Big(\int_D\int_D|z-z'|^{-\frac32\tilde q}dzdz'\Big)^{\frac1{\tilde q}}\lesssim|\kappa|^{-\frac72}
\]
and
\[
\|\Phi(\cdot,\cdot,k)-\Phi_0(\cdot,\cdot,k)\|_{W^{1,\tilde q}(D\times D)}\lesssim|\kappa|^{-\frac52}
\]
for any $\tilde q\in(1,\frac43)$.
\end{proof}

Choosing $N=1$ and using \eqref{eq:Phiesti}, \eqref{eq:kernel}, and \eqref{eq:PhiN}, we get for any $x\in U$ that
\begin{align*}
&\mathbb E\left|u_1(x,k)-u_1^{(1)}(x,k)\right|^2=\int_{D}\int_D
(\Phi^2-\Phi_1^2)(x,z,k)\overline{(\Phi^2-\Phi_1^2)(x,z',k)}\mathbb E[\rho(z)\rho(z')]dzdz'\\
&\lesssim\sup_{(x,z)\in U\times D}\left[|(\Phi+\Phi_1)(x,z,k)|^2|(\Phi-\Phi_1)(x,z,k)|^2\right]\int_D\int_D|\mathbb E[\rho(z)\rho(z')]|dzdz'\\
&\lesssim~|\kappa|^{-14}.
\end{align*}

The second moment of $u_1^{(1)}$ satisfies
\begin{align*}
\mathbb E|u_1^{(1)}(x,k)|^2 &=\frac{1}{(8|\kappa|^2)^4}\sum_{j,l=0}^1\frac{C_j^2\overline{C_l^2}}{\kappa^{2j+1}\overline{\kappa}^{2l+1}}\int_D\int_D\bigg(\frac{{\rm i}e^{{\rm i}\kappa|x-z|}-{\rm i}^{-j+\frac12}e^{-\kappa|x-z|}}{|x-z|^{j+\frac12}}\bigg)^2\\
&\quad \times\Bigg(\overline{\frac{{\rm i}e^{{\rm i}\kappa|x-z'|}-{\rm i}^{-l+\frac12}e^{-\kappa|x-z'|}}{|x-z'|^{l+\frac12}}}\Bigg)^2\mathbb E[\rho(z)\rho(z')]dzdz'\\
&=\frac{\kappa_{\rm r}^{-m}}{8^4|\kappa|^{10}}\int_D\frac{e^{-4\kappa_{\rm i}|x-z|}}{|x-z|^2}\mu(z)dz+O\left(\kappa_{\rm r}^{-m-11}\right)
\end{align*}
for any $x\in U$ and $\kappa_{\rm r}\to\infty$.

Combining the above estimates leads to 
\begin{align}\label{eq:u1_2d}
\mathbb E|u_1(x,k)|^2 &=\mathbb E|u_1^{(1)}(x,k)|^2+2\Re\mathbb E\big[\overline{u_1^{(1)}(x,k)}(u_1(x,k)-u_1^{(1)}(x,k))\big]+\mathbb E\big|u_1(x,k)-u_1^{(1)}(x,k)\big|^2\notag\\
&=\frac{\kappa_{\rm r}^{-m}}{8^4|\kappa|^{10}}\int_D\frac{e^{-4\kappa_{\rm i}|x-z|}}{|x-z|^2}\mu(z)dz+O\big(\kappa_{\rm r}^{-m-11}\big)+O\big((\kappa_{\rm r}^{-m}|\kappa|^{-10})^{\frac12}\kappa_{\rm r}^{-7}\big)+O\big(\kappa_{\rm r}^{-14}\big)\notag\\
&=\frac{\kappa_{\rm r}^{-m}}{8^4|\kappa|^{10}}\int_D\frac{e^{-4\kappa_{\rm i}|x-z|}}{|x-z|^2}\mu(z)dz+O\big(\kappa_{\rm r}^{-m-11}\big)\quad\forall~x\in U.
\end{align}

The following theorem is concerned with the contribution of $u_1$ to the reconstruction formula for both the two- and three-dimensional problems.

\begin{theorem}\label{tm:u1}
Let the random potential $\rho$ satisfy Assumption \ref{as:rho} and $U\subset\mathbb R^d$ be a bounded domain having a positive distance to the support $D$ of $\rho$. For any $x\in U$, it holds
\begin{align}\label{eq:u1_1}
\lim_{K\to\infty}\frac1K\int_K^{2K}\kappa_{\rm r}^{m+14-2d}\mathbb E|u_1(x,k)|^2d\kappa_{\rm r}=T_d(x),
\end{align}
where $T_d(x)$ is given in Theorem \ref{tm:main}. Moreover, if  $\sigma=0$, then it holds 
\begin{align}\label{eq:u1_2}
\lim_{K\to\infty}\frac1{K}\int_{K}^{2K}\kappa^{m+14-2d}|u_1(x,k)|^2d\kappa=T_d(x)\quad\mathbb P\text{-}a.s.
\end{align}
\end{theorem}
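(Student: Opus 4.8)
The plan is to derive \eqref{eq:u1_1} directly from the second-moment asymptotics \eqref{eq:u1_3d} (when $d=3$) and \eqref{eq:u1_2d} (when $d=2$) together with the frequency asymptotics \eqref{eq:kappari}, and then to upgrade this to the almost sure identity \eqref{eq:u1_2} by a law of large numbers in the frequency variable.

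For \eqref{eq:u1_1} the argument is routine. Multiplying \eqref{eq:u1_3d} or \eqref{eq:u1_2d} by $\kappa_{\rm r}^{m+14-2d}$, the remainders $O(\kappa_{\rm r}^{-m-9})$ and $O(\kappa_{\rm r}^{-m-11})$ become $O(\kappa_{\rm r}^{-1})$; since $U$ is bounded and has a positive distance to $D$, the quantity $|x-z|$ stays bounded above and below uniformly for $x\in U$, $z\in D$, so $|e^{-4\kappa_{\rm i}|x-z|}-1|\lesssim\kappa_{\rm i}\to0$ and $|\kappa|^2/\kappa_{\rm r}^2\to1$ by \eqref{eq:kappari}. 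Hence $\kappa_{\rm r}^{m+14-2d}\mathbb E|u_1(x,k)|^2\to T_d(x)$ pointwise as $\kappa_{\rm r}\to\infty$, with the left-hand side bounded for large $\kappa_{\rm r}$, and \eqref{eq:u1_1} follows from the elementary fact that $\phi(t)\to L$ as $t\to\infty$ forces $\frac1K\int_K^{2K}\phi(t)\,dt\to L$.

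The substance is \eqref{eq:u1_2}, where $\sigma=0$, so $\kappa=\kappa_{\rm r}=k^{1/2}$ is real and $\kappa_{\rm i}=0$. Writing $X_K:=\frac1K\int_K^{2K}\kappa^{m+14-2d}|u_1(x,k)|^2\,d\kappa$, Tonelli's theorem and \eqref{eq:u1_1} with $\sigma=0$ give $\mathbb E X_K\to T_d(x)$, so it remains to show $X_K-\mathbb E X_K\to0$ $\mathbb P\text{-}a.s.$ I would proceed in three steps: (i) prove $\mathrm{Var}(X_K)\lesssim K^{-1}$; (ii) along the subsequence $K_n=n^2$, for which $\sum_n\mathrm{Var}(X_{K_n})\lesssim\sum_n n^{-2}<\infty$, apply Chebyshev's inequality and the Borel--Cantelli lemma to obtain $X_{K_n}\to T_d(x)$ a.s.; (iii) fill the gaps by noting that for $K\in[K_n,K_{n+1}]$ one has $K_{n+1}/K_n\to1$ and $|X_K-X_{K_n}|$ is controlled by $|\frac1K-\frac1{K_n}|\int_{K_n}^{2K_n}\kappa^{m+14-2d}|u_1|^2\,d\kappa$ together with the boundary averages $\frac1{K_n}\int_{2K_n}^{2K_{n+1}}\kappa^{m+14-2d}|u_1|^2\,d\kappa$ and $\frac1{K_n}\int_{K_n}^{K_{n+1}}\kappa^{m+14-2d}|u_1|^2\,d\kappa$, each of which has expectation $O(n^{-1})$ and, by the estimate from (i), variance $O(n^{-3})$, hence tends to $0$ a.s. by a further Borel--Cantelli argument; combining, $X_K\to T_d(x)$ a.s.

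The main obstacle is step (i). Recalling \eqref{u1}, $u_1(x,k)=\int_D\Phi(x,z,k)^2\rho(z)\,dz$, so $\mathrm{Cov}\bigl(|u_1(x,k)|^2,|u_1(x,k')|^2\bigr)$ is a fourfold integral over $D^4$ against $\mathbb E[\rho(z_1)\rho(z_2)\rho(z_3)\rho(z_4)]$, which, by Wick's theorem since $\rho$ is Gaussian, splits into three products of covariances. The ``direct'' pairing reproduces $\mathbb E|u_1(x,k)|^2\,\mathbb E|u_1(x,k')|^2$ and cancels in the variance, leaving the two ``cross'' terms built from $\mathbb E[\rho(z_1)\rho(z_3)]\mathbb E[\rho(z_2)\rho(z_4)]$ and $\mathbb E[\rho(z_1)\rho(z_4)]\mathbb E[\rho(z_2)\rho(z_3)]$. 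Since $\sigma=0$ and $|x-z|$ is bounded below on $U\times D$, the exponentially decaying parts $e^{-\kappa|x-z|}$ of $\Phi$ (for $d=3$), respectively of its truncation $\Phi_1$ (for $d=2$, after replacing $\Phi$ by $\Phi_1$ via Lemma \ref{lm:trun}, the error being negligible by $\mathbb E|u_1-u_1^{(1)}|^2\lesssim|\kappa|^{-14}$ and Cauchy--Schwarz), contribute only exponentially small terms, so the surviving integrand carries the oscillatory factor $e^{2{\rm i}\kappa(|x-z_1|-|x-z_2|)}e^{2{\rm i}\kappa'(|x-z_3|-|x-z_4|)}$ with powers of $\kappa,\kappa'$ whose product with $\kappa^{m+14-2d}(\kappa')^{m+14-2d}$ is $O(1)$ on $[K,2K]^2$. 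Introducing in the second cross term the difference variables in the two singular covariance kernels of order $m-d\in(-1,0]$ (see \eqref{eq:kernel}) and using that the Fourier transform of $|w|^{m-d}$ decays like $|\cdot|^{-m}$ --- here $m>d-1$ is essential, making these kernels simultaneously locally integrable and oscillation-sensitive --- the two difference integrals together gain a factor $\lesssim\max\{\kappa,\kappa'\}^{-2m}$, while the remaining integral in the ``surface'' variables has phase $\sim(\kappa-\kappa')(|x-z_1|-|x-z_2|)$ built only from the distance function to $x\notin D$, hence no critical points, and gains $(1+|\kappa-\kappa'|)^{-N}$ for every $N$; the first cross term, with surface phase $\sim(\kappa+\kappa')(|x-z_1|-|x-z_2|)$, decays like $K^{-N}$ and is negligible. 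This yields, on $[K,2K]^2$, $\mathrm{Cov}\bigl(\kappa^{m+14-2d}|u_1(x,k)|^2,(\kappa')^{m+14-2d}|u_1(x,k')|^2\bigr)\lesssim(1+|\kappa-\kappa'|)^{-N}$, whence $\mathrm{Var}(X_K)\lesssim K^{-2}\int_{[K,2K]^2}(1+|\kappa-\kappa'|)^{-N}\,d\kappa\,d\kappa'\lesssim K^{-1}$. The laborious part is the precise bookkeeping of the many oscillatory subintegrals and the verification that the amplitudes produced by the difference integrals lie in a symbol class with seminorms uniform in the frequencies, which can be organized as in \cite[Theorem 4.5]{LW} and \cite[Section 4]{LLW}. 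Granting this, steps (ii) and (iii) are standard and complete the proof.
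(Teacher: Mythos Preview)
Your approach coincides with the paper's. For \eqref{eq:u1_1} the paper argues exactly as you do: the asymptotics \eqref{eq:u1_3d}--\eqref{eq:u1_2d} give pointwise convergence $\kappa_{\rm r}^{m+14-2d}\mathbb E|u_1|^2\to T_d(x)$, and the averaged statement follows immediately. For \eqref{eq:u1_2} the paper also reduces to two-frequency correlation bounds---your Wick expansion of $\mathrm{Cov}(|u_1(x,k)|^2,|u_1(x,k')|^2)$ produces precisely $|\mathbb E[u_1(x,k)\overline{u_1(x,k')}]|^2+|\mathbb E[u_1(x,k)u_1(x,k')]|^2$, and the paper states the same decay estimates you outline (arbitrary decay in $|\kappa-\kappa'|$ for the first, arbitrary decay in $\kappa+\kappa'$ for the second). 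The ergodicity conclusion that you spell out via Borel--Cantelli along $K_n=n^2$ plus interpolation is exactly what the paper invokes by citing \cite[Theorem~4.2]{LW2}.

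The one noteworthy difference is the two-dimensional truncation. You replace $\Phi$ by $\Phi_1$ and dismiss the remainder via $\mathbb E|u_1-u_1^{(1)}|^2\lesssim|\kappa|^{-14}$ and Cauchy--Schwarz; this works inside the \emph{variance} computation (step (i)), but you should be explicit that passing from an $L^2$ bound on $u_1-u_1^{(1)}$ to an $L^2$ bound on $\kappa^{m+10}(|u_1|^2-|u_1^{(1)}|^2)$ requires a Gaussian fourth-moment estimate (hypercontractivity). The paper instead truncates at $N=3$ and uses the \emph{pointwise} a.s.\ bound $|u_1-u_1^{(3)}|\lesssim\kappa^{-7}$ (obtained from $\|\Phi-\Phi_3\|_{W^{1,2q}(D)}$ and $\rho\in W^{-1,p}(D)$), which makes $\kappa^{m+10}|u_1-u_1^{(3)}|^2\lesssim\kappa^{m-4}\to0$ a.s.\ trivial and bypasses any moment gymnastics. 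Either route is valid; the paper's is cleaner, yours keeps the truncation order minimal.
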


\begin{proof}
To prove \eqref{eq:u1_1}, we consider the imaginary part of $\kappa$ as a function of $\kappa_{\rm r}$, i.e., $\kappa_{\rm i}=\kappa_{\rm i}(\kappa_{\rm r})$, which satisfies $\lim_{\kappa_{\rm r}\to\infty}\kappa_{\rm i}(\kappa_{\rm r})=0$. From \eqref{eq:u1_3d} and \eqref{eq:u1_2d}, we get 
\[
\lim_{\kappa_{\rm r}\to\infty}\kappa_{\rm r}^{m+14-2d}\mathbb E|u_1(x,k)|^2=T_d(x).
\]
Based on the mean value theorem, \eqref{eq:u1_1} follows from the identity
\[
\lim_{\kappa_{\rm r}\to\infty}\kappa_{\rm r}^{m+14-2d}\mathbb E|u_1(x,k)|^2=\lim_{K\to\infty}\frac1K\int_K^{2K}\kappa_{\rm r}^{m+14-2d}\mathbb E|u_1(x,k)|^2d\kappa_{\rm r}. 
\]

It then suffices to show \eqref{eq:u1_2} for the case $\sigma=0$, i.e., $\kappa=\kappa_{\rm r}=k^{\frac12}\in\mathbb R_+$. Noting 
\[
\lim_{k\to\infty}e^{-4\kappa_{\rm i}|x-z|}=1,
\]
and combining \eqref{eq:kappari} and \eqref{eq:u1_1}, we have 
\begin{align*}
\lim_{k\to\infty}\kappa^{m+14-2d}\mathbb E|u_1(x,k)|^2=T_d(x). 
\end{align*}

To replace the expectation in the above formula by the frequency average, an asymptotic version of the law of large numbers is required. Such a replacement is an analogue of ergodicity in the frequency domain, and has been adopted in the analysis of stochastic inverse problems (cf. \cite{LPS08,LLW,LW2}).

For $d=3$, we consider the correlations $\mathbb E[u_1(x,k_1)\overline{u_1(x,k_2)}]$ and $\mathbb E[u_1(x,k_1)u_1(x,k_2)]$ with $k_i=\kappa_i^2, i=1,2$ at different wavenumbers $\kappa_1$ and $\kappa_2$. Following the same procedure as that used in \cite[Lemma 4.1]{LW2}, we may show that  
\begin{align*}
\big|\mathbb E[u_1(x,k_1)\overline{u_1(x,k_2)}]\big| &\lesssim \kappa_1^{-4}\kappa_2^{-4}\left[(\kappa_1+\kappa_2)^{-m}(1+|\kappa_1-\kappa_2|)^{-M_1}+\kappa_1^{-M_2}+\kappa_2^{-M_2}\right],\\
\big|\mathbb E[u_1(x,k_1)u_1(x,k_2)]\big| &\lesssim\kappa_1^{-4}\kappa_2^{-4}\left[(\kappa_1+\kappa_2)^{-M_1}(1+|\kappa_1-\kappa_2|)^{-m}+\kappa_1^{-M_2}+\kappa_2^{-M_2}\right],
\end{align*}
where $M_1,M_2>0$ are arbitrary integers. The above estimates indicate the asymptotic independence of $u_1(x,k_1)$ and $u_1(x,k_2)$ for $|\kappa_1-\kappa_2|\gg1$. Then, according to \cite[Theorem 4.2]{LW2}, the expectation in \eqref{eq:u1_1} can be replaced by the frequency average with respect to $\kappa$:
\begin{align*}
\lim_{K\to\infty}\frac1{K}\int_{K}^{2K}\kappa^{m+8}|u_1(x,k)|^2d\kappa=T_3(x)\quad\mathbb P\text{-}a.s.
\end{align*}

For $d=2$, we need to consider $u_1^{(3)}$, which is the truncated $u_1$ with $N=3$. Its correlations at different wavenumbers can be carried out similarly as those for the three-dimensional case (cf. \cite[Lemma 4.4]{LW2}). Hence
\begin{align}\label{eq:u13}
\lim_{K\to\infty}\frac1{K}\int_{K}^{2K}\kappa^{m+10}|u_1^{(3)}(x,k)|^2d\kappa=T_2(x)\quad\mathbb P\text{-}a.s.
\end{align}
The residual $u_1-u_1^{(3)}$ satisfies 
\begin{align*}
|u_1(x,k)-u_1^{(3)}(x,k)|&=\left|\int_D(\Phi^2-\Phi_3^2)(x,z,k)\rho(z)dz\right|\\
&\lesssim\|\Phi^2(x,\cdot,k)-\Phi_3^2(x,\cdot,k)\|_{W^{1,q}(D)}\|\rho\|_{W^{-1,p}(D)}\\
&\lesssim\|\Phi(x,\cdot,k)+\Phi_3(x,\cdot,k)\|_{W^{1,2q}(D)}\|\Phi(x,\cdot,k)-\Phi_3(x,\cdot,k)\|_{W^{1,2q}(D)}\|\rho\|_{W^{-1,p}(D)}\\
&\lesssim k^{-\frac34}\kappa^{-\frac{11}2}\lesssim\kappa^{-7}\quad\mathbb P\text{-}a.s.
\end{align*}
for any $p>1$ and $q$ satisfying $\frac1p+\frac1q=1$,
where we used Lemmas \ref{lm:Phi} and \ref{lm:trun}, and  $\rho\in W^{\frac{m-2}2-\epsilon,p}(D)\subset W^{-1,p}(D)$ for $m\in(1,2]$ and any sufficiently small $\epsilon\in(0,\frac m2)$. We have from a simple calculation that
\begin{align*}
\lim_{K\to\infty}\frac1{K}\int_{K}^{2K}\kappa^{m+10}|u_1(x,k)-u_1^{(3)}(x,k)|^2d\kappa\lesssim\lim_{K\to\infty}\frac1{K}\int_{K}^{2K}\kappa^{m-4}d\kappa=0\quad\mathbb P\text{-}a.s..
\end{align*}
Combining the above estimate with \eqref{eq:u13} leads to
\begin{align*}
\lim_{K\to\infty}\frac1{K}\int_{K}^{2K}\kappa^{m+10}|u_1(x,k)|^2d\kappa=T_2(x)\quad\mathbb P\text{-}a.s.,
\end{align*}
which completes the proof of \eqref{eq:u1_2}.
\end{proof}

\subsection{The analysis of $u_2$} 

It follows from \eqref{borns} and \eqref{u1} that 
\[
u_2(x,k)=\int_{\mathbb R^d}\Phi(x,z,k)\rho(z)u_1(z,x,k)dz=\int_{\mathbb R^d}\int_{\mathbb R^d}\Phi(x,z,k)\rho(z)\Phi(z,z',k)\rho(z')\Phi(z',x,k)dzdz',
\]
which does not contribute to the inversion formula as stated in the following theorem.

\begin{theorem}\label{tm:u2}
Let the random potential $\rho$ satisfy Assumption \ref{as:rho} and $U\subset\mathbb R^d$ be a bounded and convex domain having a positive distance to the support $D$ of $\rho$. For any $x\in U$, it holds 
\[
\lim_{K\to\infty}\frac1{K}\int_K^{2K}\kappa_{\rm r}^{m+14-2d}|u_2(x,k)|^2d\kappa_{\rm r}=0\quad\mathbb P\text{-}a.s.
\]
\end{theorem}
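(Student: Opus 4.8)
The plan is to establish a $\mathbb P\text{-}a.s.$ pointwise bound $|u_2(x,k)|\lesssim k^{\alpha}$ valid for all large $k$, with $\alpha$ small enough that $\kappa_{\rm r}^{m+14-2d}|u_2(x,k)|^2$ becomes a strictly negative power of $\kappa_{\rm r}$; once this is available, no ergodicity argument is needed, since the randomness enters only through the almost surely finite constant $\|\rho\|_{W^{-\gamma,\tilde p}(D)}$, and the frequency average over $[K,2K]$ of a negative power of $\kappa_{\rm r}$ tends to zero for trivial reasons. Since the source sits at $y=x$, we have $u_2(x,k)=\mathcal K_k\big(u_1(\cdot,x,k)\big)(x)$ with $u_1(\cdot,x,k)=\mathcal K_k\big(\Phi(\cdot,x,k)\big)$, i.e.
\[
u_2(x,k)=\int_D\Phi(x,z,k)\rho(z)u_1(z,x,k)\,dz,\qquad u_1(z,x,k)=\int_D\Phi(z,z',k)\rho(z')\Phi(z',x,k)\,dz'.
\]
Reading $u_2(x,k)$ as the duality pairing $\big\langle\rho\,\Phi(x,\cdot,k),\,u_1(\cdot,x,k)\big\rangle_D$ (legitimate since $\rho\,\Phi(x,\cdot,k)$ is compactly supported in $D$), one obtains $|u_2(x,k)|\le\|\rho\,\Phi(x,\cdot,k)\|_{H^{-s}(D)}\,\|u_1(\cdot,x,k)\|_{H^{s}(D)}$ for a Sobolev index $s$ to be chosen just above $\tfrac{d-m}2$.

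For the first factor, \eqref{eq:rhophi} together with the smoothness of $\Phi(x,\cdot,k)$ on $D$ (owing to the positive distance between $x$ and $D$) gives, for a suitable $\gamma\in(\tfrac{d-m}2,s)$ and conjugate exponents with $q>2$,
\[
\|\rho\,\Phi(x,\cdot,k)\|_{H^{-s}(D)}\lesssim\|\rho\|_{W^{-\gamma,\tilde p}(D)}\,\|\Phi(x,\cdot,k)\|_{W^{\gamma,q}(D)}\lesssim k^{\frac{d-7}4+\frac{\gamma}2}\qquad\mathbb P\text{-}a.s.,
\]
by Lemma \ref{lm:Phi}. For the second factor, $u_1(\cdot,x,k)=\mathcal K_k(\Phi(\cdot,x,k))$, so the bound for $\mathcal K_k$ on $H^{s}(G)$ of Lemma \ref{lm:operatorK}(ii) (with $G$ a bounded Lipschitz neighbourhood of $D$ still at positive distance from $x$), combined again with Lemma \ref{lm:Phi}, yields
\[
\|u_1(\cdot,x,k)\|_{H^{s}(D)}\lesssim k^{s-\frac32}\,\|\Phi(\cdot,x,k)\|_{H^{s}(G)}\lesssim k^{s-\frac32+\frac{d-7}4+\frac s2}\qquad\mathbb P\text{-}a.s.
\]
Multiplying, $|u_2(x,k)|\lesssim k^{(d+3s+\gamma-10)/2}$ $\mathbb P\text{-}a.s.$; letting $s\downarrow\tfrac{d-m}2$ and $\gamma\downarrow\tfrac{d-m}2$ with $\gamma<s$, the exponent tends to $\tfrac{3d-2m-10}2$, so that $|u_2(x,k)|^2\lesssim\kappa_{\rm r}^{\,6d-4m-20+\delta}$ $\mathbb P\text{-}a.s.$ for every $\delta>0$ and all large $k$, using $\kappa_{\rm r}\sim k^{1/2}$ from \eqref{eq:kappari}.

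Hence $\kappa_{\rm r}^{m+14-2d}|u_2(x,k)|^2\lesssim\kappa_{\rm r}^{\,4d-3m-6+\delta}$ $\mathbb P\text{-}a.s.$, and Assumption \ref{as:rho} forces $m>d-1\ge\tfrac{4d-6}3$ for $d\in\{2,3\}$, so $4d-3m-6<0$; choosing $\delta$ small makes the integrand a strictly negative power of $\kappa_{\rm r}$, and therefore $\tfrac1K\int_K^{2K}\kappa_{\rm r}^{m+14-2d}|u_2(x,k)|^2\,d\kappa_{\rm r}\to0$ $\mathbb P\text{-}a.s.$ as $K\to\infty$.

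The only delicate point is the sharpness of this exponent count, not any analytic difficulty: the estimate closes precisely because \emph{both} of the $\Phi$-factors joining $x$ to $D$ contribute the genuine decay $k^{(d-7)/4+\cdot}$ made possible by the positive distance between $U$ and $D$. Had one instead bounded the final evaluation through the mapping property $\mathcal H_k:H^{-s}(D)\to L^\infty(U)$ of Lemma \ref{lm:operatorH}(ii), one would lose one such factor and be left with the insufficient range $m>\tfrac{4d-5}3$, which is empty of help when $d=3$ and $m\in(2,\tfrac73]$. A secondary bookkeeping point is that \eqref{eq:rhophi} requires $q>2$, so the exponents $\gamma,s,p,q$ must be kept mutually compatible while still letting $s,\gamma$ be pushed down to $\tfrac{d-m}2$; this is routine and parallels the estimates already carried out for Lemma \ref{lm:operatorK}.
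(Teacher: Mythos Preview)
Your argument is correct and furnishes a substantially simpler proof than the paper's. The paper does not pursue a pointwise bound on $|u_2(x,k)|$; instead it reduces (via Fatou and Fubini) to the finiteness of $\int_1^\infty\kappa_{\rm r}^{m+13-2d}\mathbb E|u_{2,\varepsilon}(x,k)|^2\,d\kappa_{\rm r}$ uniformly in a mollification parameter $\varepsilon$, rewrites $u_{2,\varepsilon}$ as an oscillatory integral with phase $L(z,z')=|x-z|+|z-z'|+|z'-x|$, recognises the $\kappa_{\rm r}$-integral as a Fourier transform, and applies Parseval together with Isserlis' theorem to estimate fourth moments of $\rho_\varepsilon$ on the level sets $\{L=t\}$ (this is also where the convexity of $U$ enters, to control $\nabla L$). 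In two dimensions it first replaces $\Phi$ by its truncation $\Phi_0$ and handles the remainder separately.

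Your route sidesteps all of this by exploiting, exactly as you note in your closing paragraph, that the outer evaluation at $x\in U$ is a duality pairing against $\rho\,\Phi(x,\cdot,k)$, and the latter inherits the sharp decay $k^{(d-7)/4+\gamma/2}$ of Lemma~\ref{lm:Phi} because $\mathrm{dist}(x,D)>0$. This saves precisely one factor $\kappa_{\rm r}^{-1}$ over the generic $\mathcal H_k:H^{-s}\to L^\infty$ bound, and your exponent count $4d-3m-6<0\iff m>\tfrac{4d-6}{3}$ then closes for the full range $m\in(d-1,d]$ in both dimensions. The paper's approach, by contrast, yields $L^2$-in-frequency information rather than a pointwise power law, which is stronger in principle (and perhaps more robust if one later wished to weaken the separation hypothesis), but is not needed for the statement at hand. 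Two minor remarks: the sharp estimate of Lemma~\ref{lm:Phi} is stated for $y\in D$ and $G$ at positive distance from $D$, whereas you use it with the roles reversed---this is fine by the symmetry $\Phi(x,z,k)=\Phi(z,x,k)$ and the fact that the proof only uses $\inf|x-z|>0$; and your argument does not use the convexity of $U$, which the paper invokes only for its phase-function machinery.
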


\begin{proof}
The proof is motivated by \cite{LPS08}, where the inverse random potential scattering problem is studied for the two-dimensional Schr\"odinger equation with $m\ge d$. In what follows, we provide some details to demonstrate the differences for the biharmonic wave equation of rougher potentials with $m\in(d-1,d]$.

(i) First we consider the case $d=3$. As a function of $x$ and $\kappa_{\rm r}$, $u_2(x,k)$ satisfies
\begin{align*}
\frac1{K}\int_K^{2K}\kappa_{\rm r}^{m+8}|u_2(x,k)|^2d\kappa_{\rm r}
&\le\int_K^{2K}\frac{\kappa_{\rm r}}{K}\kappa_{\rm r}^{m+7}|u_2(x,k)|^2d\kappa_{\rm r}\\
&\le\int_1^{\infty}\min\left\{2,\frac{\kappa_{\rm r}}{K}\right\}\kappa_{\rm r}^{m+7}|u_2(x,k)|^2d\kappa_{\rm r}\quad\mathbb P\text{-}a.s.
\end{align*}
Then the required result is obtained by taking $K\to\infty$ if the following estimate holds:
\begin{align}\label{eq:int}
\int_1^\infty \kappa_{\rm r}^{m+7}\mathbb E|u_2(x,k)|^2d\kappa_{\rm r}<\infty\quad\forall\,x\in U.
\end{align}

To deal with the product of the rough potentials in $\mathbb E|u_2(x,k)|^2$, we consider the smooth modification $\rho_\varepsilon:=\rho*\varphi_\varepsilon$ with $\varphi_\varepsilon(x)=\varepsilon^{-2}\varphi(x/\varepsilon)$ for $\varepsilon>0$ and $\varphi\in C_0^\infty(\mathbb R^3)$. Define
\begin{align*}
u_{2,\varepsilon}(x,k):&=\int_{\mathbb R^d}\int_{\mathbb R^d}\Phi(x,z,k)\rho_\varepsilon(z)\Phi(z,z',k)\rho_\varepsilon(z')\Phi(z',x,k)dzdz'\\
&=-\frac{1}{(8\pi\kappa^2)^3}\int_{D}\int_{D}\frac{(e^{{\rm i}\kappa|x-z|}-e^{-\kappa|x-z|})e^{{\rm i}\kappa|z-z'|}(e^{{\rm i}\kappa|x-z'|}-e^{-\kappa|x-z'|})}{|x-z||z-z'||x-z'|}\rho_\varepsilon(z)\rho_\varepsilon(z')dzdz'\\
&\quad +\frac{1}{(8\pi\kappa^2)^3}\int_{D}\int_{D}\frac{(e^{{\rm i}\kappa|x-z|}-e^{-\kappa|x-z|})e^{-\kappa|z-z'|}(e^{{\rm i}\kappa|x-z'|}-e^{-\kappa|x-z'|})}{|x-z||z-z'||x-z'|}\rho_\varepsilon(z)\rho_\varepsilon(z')dzdz'\\
&=:-\frac{1}{(8\pi\kappa^2)^3}{\rm II}_1(x,k,\varepsilon)+\frac{1}{(8\pi\kappa^2)^3}{\rm II}_2(x,k,\varepsilon).
\end{align*}
Note that
\begin{align*}
\int_1^\infty \kappa_{\rm r}^{m+7}\mathbb E|u_{2,\varepsilon}(x,k)|^2d\kappa_{\rm r}\lesssim\sum_{i=1}^2\int_1^\infty|\kappa|^{-12}\kappa_{\rm r}^{m+7}\mathbb E|{\rm II}_i(x,k,\varepsilon)|^2d\kappa_{\rm r}\lesssim\sum_{i=1}^2\int_1^\infty\mathbb E|{\rm II}_i(x,k,\varepsilon)|^2d\kappa_{\rm r},
\end{align*}
where in the last inequality we used 
\[
|\kappa|^{-12}\kappa_{\rm r}^{m+7}\le\kappa_{\rm r}^{m-5}\le1\quad \forall\,m\in(2,3].
\]
Based on the Fubini theorem and Fatou's lemma, to show \eqref{eq:int}, it suffices to prove
\begin{align*}
\sup_{\varepsilon\in(0,1)}\int_1^\infty\mathbb E|{\rm II}_i(x,k,\varepsilon)|^2d\kappa_{\rm r}<\infty\quad\forall\,x\in U,~i=1,2.
\end{align*}

The estimates for ${\rm II}_1$ and ${\rm II}_2$ are parallel, and they are similar to the procedure used in \cite{LPS08,LLW} for the inverse potential scattering problems of the two-dimensional acoustic and elastic wave equations without attenuation. The basic idea is to rewrite each term ${\rm II}_i$, $i=1,2$, as the Fourier or inverse Fourier transform of some well-defined function. In the following, we only give the estimate for ${\rm II}_1$ to show the differences in handling the attenuation.

Denote
\[
\mathbb K(x,z,z'):=\frac{(e^{{\rm i}\kappa|x-z|}-e^{-\kappa|x-z|})e^{-{\rm i}\kappa_{\rm r}|x-z|}e^{-\kappa_{\rm i}|z-z'|}e^{-{\rm i}\kappa_{\rm r}|z'-x|}(e^{{\rm i}\kappa|x-z'|}-e^{-\kappa|x-z'|})}{|x-z||z-z'||x-z'|},
\]
then ${\rm II}_1$ can be rewritten as
\begin{align*}
{\rm II}_1(x,k,\varepsilon)=&\int_D\int_De^{{\rm i}\kappa_{\rm r}(|x-z|+|z-z'|+|z'-x|)}\mathbb K(x,z,z')\rho_\varepsilon(z)\rho_\varepsilon(z')dzdz'.
\end{align*}
Define a phase function
\[
L(z,z')=|x-z|+|z-z'|+|z'-x|,
\]
which is uniformly bounded below and above for any $(z,z')\in D\times D$ and $x\in U$. Hence the set
\[
\{(z,z')\in D\times D: L(z,z')=t\},\quad t>0
\]
is non-empty only for $t$ lying in a finite interval $[T_0,T_1]$ with $0<T_0<T_1$.

For any fixed $\tilde t\in[T_0,T_1]$, there exist $\eta=\eta(\tilde t)$ and an open cone $K=K(\tilde t)\subset\mathbb R^6$ such that
\[
D\times D~\cap\{(z,z'): t_0<L(z,z')<t_1\}\subset K\cap\{(z,z'): t_0<L(z,z')<t_1\}=:\Gamma,
\]
where $t_0=\tilde t-\eta$ and $t_1=\tilde t+\eta$. Letting $\Gamma_t:=\Gamma\cap\{(z,z'): L(z,z')=t\}$, we have
\begin{align*}
&\int_\Gamma e^{{\rm i}\kappa_{\rm r}L(z,z')}\mathbb K(x,z,z')\rho_\varepsilon(z)\rho_\varepsilon(z')dzdz'\\
&=\int_{t_0}^{t_1}e^{{\rm i}\kappa_{\rm r}t}\left[\int_{\Gamma_t}\mathbb K(x,z,z')|\nabla L(z,z')|^{-1}\rho_\varepsilon(z)\rho_\varepsilon(z')d\mathcal H^5(z,z')\right]dt\\
&=:\int_{t_0}^{t_1}e^{{\rm i}\kappa_{\rm r}t}S_\varepsilon(t)dt=\mathcal F[S_\varepsilon](-\kappa_{\rm r}),
\end{align*}
where  $\mathcal H^5$ is the Hausdorff measure on $\Gamma_t$ and $S_{\varepsilon}$ is compactly supported in $[T_0,T_1]$. Applying Parseval's identity yields  
\[
\int_1^\infty\mathbb E|{\rm II}_1(x,k,\varepsilon)|^2d\kappa_{\rm r}\lesssim\mathbb E\|S_\varepsilon\|^2_{L^2(T_0,T_1)}.
\]

Using Isserlis' theorem, we obtain 
\begin{align*}
\mathbb E|S_\varepsilon(t)|^2&=\int_{\Gamma_t}\int_{\Gamma_t}\mathbb K(x,z_1,z'_1)\overline{\mathbb K(x,z_2,z'_2)}|\nabla L(z_1,z'_1)|^{-1}|\nabla L(z_2,z'_2)|^{-1}\\
&\quad \times\mathbb E\left[\rho_\varepsilon(z_1)\rho_\varepsilon(z'_1)\rho_\varepsilon(z_2)\rho_\varepsilon(z'_2)\right]d\mathcal H^5(z_1,z'_1)d\mathcal H^5(z_2,z'_2)\\
&=\int_{\Gamma_t}\int_{\Gamma_t}\mathbb K(x,z_1,z'_1)\overline{\mathbb K(x,z_2,z'_2)}|\nabla L(z_1,z'_1)|^{-1}|\nabla L(z_2,z'_2)|^{-1}\\
&\quad \times\Big(\mathbb E[\rho_\varepsilon(z_1)\rho_\varepsilon(z'_1)]\mathbb E[\rho_\varepsilon(z_2)\rho_\varepsilon(z'_2)]+\mathbb E[\rho_\varepsilon(z_1)\rho_\varepsilon(z_2)]\mathbb E[\rho_\varepsilon(z'_1)\rho_\varepsilon(z'_2)]\\
&\quad +\mathbb E[\rho_\varepsilon(z_1)\rho_\varepsilon(z'_2)]\mathbb E[\rho_\varepsilon(z'_1)\rho_\varepsilon(z_2)]\Big)d\mathcal H^5(z_1,z'_1)d\mathcal H^5(z_2,z'_2),
\end{align*}
where $\mathbb K$ and $\nabla L$ satisfy
$|\mathbb K(x,z,z')|\lesssim|z-z'|^{-1}$ and $0<C_1\le|\nabla L(z,z')|\le C_2$, respectively, for any $(z,z')\in D\times D$ with $z\neq z'$ (cf. \cite{LPS08}), and $|\mathbb E[\rho_\varepsilon(z)\rho_\varepsilon(z')]|\lesssim|z-z'|^{m-3-\epsilon}$ for any $\epsilon>0$ and $m\in(2,3]$ according to \eqref{eq:kernel}.
It follows from the H\"older inequality and the symmetry of the integral that
\begin{align*}
\mathbb E|S_\varepsilon(t)|^2&\lesssim\int_{\Gamma_t}\int_{\Gamma_t}|z_1-z'_1|^{-1}|z_2-z'_2|^{-1}|z_1-z'_1|^{m-3-\epsilon}|z_2-z'_2|^{m-3-\epsilon}d\mathcal H^5(z_1,z'_1)d\mathcal H^5(z_2,z'_2)\\
&\quad+\int_{\Gamma_t}\int_{\Gamma_t}|z_1-z'_1|^{-1}|z_2-z'_2|^{-1}|z_1-z_2|^{m-3-\epsilon}|z'_1-z'_2|^{m-3-\epsilon}d\mathcal H^5(z_1,z'_1)d\mathcal H^5(z_2,z'_2)\\
&\quad+\int_{\Gamma_t}\int_{\Gamma_t}|z_1-z'_1|^{-1}|z_2-z'_2|^{-1}|z_1-z'_2|^{m-3-\epsilon}|z'_1-z_2|^{m-3-\epsilon}d\mathcal H^5(z_1,z'_1)d\mathcal H^5(z_2,z'_2)\\
&=\left(\int_{\Gamma_t}|z_1-z'_1|^{m-4-\epsilon}d\mathcal H^5(z_1,z'_1)\right)^2\\
&\quad +2\int_{\Gamma_t}\int_{\Gamma_t}|z_1-z'_1|^{-1}|z_2-z'_2|^{-1}|z_1-z_2|^{m-3-\epsilon}|z'_1-z'_2|^{m-3-\epsilon}d\mathcal H^5(z_1,z'_1)d\mathcal H^5(z_2,z'_2)\\
\lesssim&\left(\int_{\Gamma_t}|z_1-z'_1|^{m-4-\epsilon}d\mathcal H^5(z_1,z'_1)\right)^2\\
&\quad +\left[\int_{\Gamma_t}\int_{\Gamma_t}|z_1-z'_1|^{-3}|z_2-z'_2|^{-3}d\mathcal H^5(z_1,z'_1)d\mathcal H^5(z_2,z'_2)\right]^{\frac13}\\
&\quad \times\left[\int_{\Gamma_t}\int_{\Gamma_t}|z_1-z_2|^{\frac32(m-3-\epsilon)}|z'_1-z'_2|^{\frac32(m-3-\epsilon)}d\mathcal H^5(z_1,z'_1)d\mathcal H^5(z_2,z'_2)\right]^{\frac23}\\
&\lesssim\left(\int_{\Gamma_t}|z_1-z'_1|^{m-4-\epsilon}d\mathcal H^5(z_1,z'_1)\right)^2+\left(\int_{\Gamma_t}|z_1-z'_1|^{-3}d\mathcal H^5(z_1,z'_1)\right)^{\frac43}\\
&\quad +\left(\int_{\Gamma_t}\int_{\Gamma_t}|z_1-z_2|^{3(m-3-\epsilon)}d\mathcal H^5(z_1,z'_1)d\mathcal H^5(z_2,z'_2)\right)^{\frac43},
\end{align*}
where the boundedness of all the last three integrals can be obtained similarly to the two-dimensional problem shown in \cite[Lemma 6]{LPS08}.
 
(ii) Next we consider the case $d=2$. Define the following auxiliary functions (cf. \cite[Section 5.2]{LLW}) via the truncated 
fundamental solution $\Phi_0$: 
\begin{align*}
u_{2,l}(x,k):&=\int_{\mathbb R^d}\int_{\mathbb R^d}\Phi_0(x,z,k)\rho(z)\Phi(z,z',k)\rho(z')\Phi(z',x,k)dzdz',\\
u_{2,r}(x,k):&=\int_{\mathbb R^d}\int_{\mathbb R^d}\Phi_0(x,z,k)\rho(z)\Phi(z,z',k)\rho(z')\Phi_0(z',x,k)dzdz',\\
v(x,k):&=\int_{\mathbb R^d}\int_{\mathbb R^d}\Phi_0(x,z,k)\rho(z)\Phi_0(z,z',k)\rho(z')\Phi_0(z',x,k)dzdz'.
\end{align*}
By Lemmas \ref{lm:Phi}, \ref{lm:operatorK}, and \ref{lm:trun}, we have 
\begin{align*}
|u_2(x,k)-u_{2,l}(x,k)|&\lesssim\|\rho\|_{W^{-\gamma,p}(D)}\left\|\left[\Phi(x,\cdot,k)-\Phi_0(x,\cdot,k)\right]\mathcal K_k\Phi(\cdot,x,k)\right\|_{W^{\gamma,q}(D)}\\
&\lesssim\|\Phi(x,\cdot,k)-\Phi_0(x,\cdot,k)\|_{W^{\gamma,2q}(D)}\|\mathcal K_k\|_{\mathcal L(W^{\gamma,2q}(D))}\|\Phi(\cdot,x,k)\|_{W^{\gamma,2q}(D)}\\
&\lesssim |\kappa|^{-\frac72+\gamma}k^{\gamma-\frac1q-\frac12}k^{-\frac54+\frac\gamma2}\lesssim\kappa_{\rm r}^{-7-\frac2q+4\gamma}\quad\mathbb P\text{-}a.s.,
\end{align*}
\begin{align*}
|u_{2,l}(x,k)-u_{2,r}(x,k)|&\lesssim\|\rho\|_{W^{-\gamma,p}(D)}\left\|\Phi_0(x,\cdot,k)\mathcal K_k\left[\Phi(\cdot,x,k)-\Phi_0(\cdot,x,k)\right]\right\|_{W^{\gamma,q}(D)}\\
&\lesssim\|\Phi_0(x,\cdot,k)\|_{W^{\gamma,2q}(D)}\|\mathcal K_k\|_{\mathcal L(W^{\gamma,2q}(D))}\|\Phi(\cdot,x,k)-\Phi_0(\cdot,x,k)\|_{W^{\gamma,2q}(D)}\\
&\lesssim\kappa_{\rm r}^{-7-\frac2q+4\gamma}\quad\mathbb P\text{-}a.s.,
\end{align*}
\begin{align*}
|u_{2,r}(x,k)-v(x,k)|&\lesssim\|\Phi(\cdot,\cdot,k)-\Phi_0(\cdot,\cdot,k)\|_{W^{\gamma,\tilde q}(D\times D)}\|(\rho\otimes\rho)(\Phi_0\otimes\Phi_0(x,\cdot,k))\|_{W^{-2\gamma,\tilde p}(D\times D)}\\
&\lesssim|\kappa|^{-\frac72+\gamma}\|\rho\|_{W^{-\gamma,\infty}(D)}^2\|\Phi_0(x,\cdot,k)\otimes\Phi_0(\cdot,x,k)\|_{W^{2\gamma,\infty}(D\times D)}\\
&\lesssim\kappa_{\rm r}^{-\frac{17}2+4\gamma}\quad\mathbb P\text{-}a.s.,
\end{align*}
where $(p,q)$ and $(\tilde p,\tilde q)$ are conjugate pairs with $q>1$, $\gamma\in(\frac{2-m}2,\frac12+\frac1q)$, and $\tilde q\in(1,\frac43)$.  
Choosing $q=\frac1{1-\epsilon}$ and $\gamma=\frac{2-m}2+\epsilon$ with a sufficiently small $\epsilon>0$ in above estimates, we get
\begin{align*}
&\lim_{K\to\infty}\frac1K\int_K^{2K}\kappa_{\rm r}^{m+10}|u_2(x,k)-v(x,k)|^2d\kappa_{\rm r}\\
&\lesssim\lim_{K\to\infty}\frac1K\int_K^{2K}\kappa_{\rm r}^{m+10}\Big(\kappa_{\rm r}^{-7-\frac2q+4\gamma}+\kappa_{\rm r}^{-\frac{17}2+4\gamma}\Big)^2d\kappa_{\rm r}\\
&\lesssim\lim_{K\to\infty}\frac1K\int_K^{2K}\left(\kappa_{\rm r}^{-3m+12\epsilon}+\kappa_{\rm r}^{1-3m+8\epsilon}\right)d\kappa_{\rm r}=0\quad\mathbb P\text{-}a.s.
\end{align*}
Hence, to show the result in the theorem, it suffices to prove that the contribution of $v$ is zero. Similar to the three-dimensional case, we consider the smooth modification 
\begin{align*}
v_\varepsilon(x,k):&=\int_{\mathbb R^d}\int_{\mathbb R^d}\Phi_0(x,z,k)\rho_\varepsilon(z)\Phi_0(z,z',k)\rho_\varepsilon(z')\Phi_0(z',x,k)dzdz'\\
&=-\frac{{\rm i}}{8^3\kappa^{\frac{15}2}}\int_D\int_D\frac{({\rm i}e^{{\rm i}\kappa|x-z|}-{\rm i}^{\frac12}e^{-\kappa|x-z|})e^{{\rm i}\kappa|z-z'|}({\rm i}e^{{\rm i}\kappa|z'-x|}-{\rm i}^{\frac12}e^{-\kappa|z'-x|})}{|x-z|^{\frac12}|z-z'|^{\frac12}|z'-x|^{\frac12}}\rho_\varepsilon(z)\rho_\varepsilon(z')dzdz'\\
&\quad +\frac{{\rm i}^{\frac12}}{8^3\kappa^{\frac{15}2}}\int_D\int_D\frac{({\rm i}e^{{\rm i}\kappa|x-z|}-{\rm i}^{\frac12}e^{-\kappa|x-z|})e^{-\kappa|z-z'|}({\rm i}e^{{\rm i}\kappa|z'-x|}-{\rm i}^{\frac12}e^{-\kappa|z'-x|})}{|x-z|^{\frac12}|z-z'|^{\frac12}|z'-x|^{\frac12}}\rho_\varepsilon(z)\rho_\varepsilon(z')dzdz'\\
&=:-\frac{{\rm i}}{8^3\kappa^{\frac{15}2}}{\rm\tilde{II}}_1(x,k,\varepsilon)+\frac{{\rm i}^{\frac12}}{8^3\kappa^{\frac{15}2}}{\rm\tilde{II}}_2(x,k,\varepsilon).
\end{align*}
Following the same procedure as used in the three-dimensional case, we may show 
\begin{align*}\label{eq:intv}
\int_1^\infty\kappa_{\rm r}^{m+9}\mathbb E|v_\varepsilon(x,k)|^2d\kappa_{\rm r}\lesssim\sum_{i=1}^2\int_1^\infty\mathbb E|{\rm\tilde{II}}_{i}(x,k,\varepsilon)|^2d\kappa_{\rm r}<\infty\quad\forall\,x\in U,
\end{align*}
which completes the proof.
\end{proof}

\subsection{The analysis of residual}

Taking out $u_1$ and $u_2$, we define the residual in the Born series
\[
b(x,k):=\sum_{n=3}^\infty u_n(x,k),
\]
which has no contribution to the reconstruction formula as shown in the following theorem. 

\begin{theorem}\label{tm:b}
Let assumptions in Theorem \ref{tm:u2} hold. Then for any $x\in U$, it holds
\[
\lim_{k\to\infty}\kappa_{\rm r}^{m+14-2d}|b(x,k)|^2=0\quad\mathbb P\text{-}a.s.
\]
\end{theorem}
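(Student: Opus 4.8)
The plan is to deduce the statement directly from the Born-series tail estimate \eqref{eq:resi} established in the proof of Lemma \ref{lm:Born}. Since the point source sits at $y=x$, one has $b(x,k)=u(x,x,k)-\sum_{n=0}^{2}u_n(x,x,k)$, which is exactly the remainder bounded in \eqref{eq:resi} with $N=2$ and $y=x$; that derivation is uniform in $y\in U$, because $U$ has a positive distance to $D$ and the bound $\|\Phi(\cdot,y,k)\|_{H^{s}(D)}\lesssim k^{\frac{d-7}4+\frac s2}$ supplied by Lemma \ref{lm:Phi} is uniform over $y\in U$. Taking $N=2$ in \eqref{eq:resi} therefore gives, for every $s\in(\frac{d-m}2,\frac32)$, every $\epsilon>0$, and all $k\ge k_0$,
\[
|b(x,k)|\lesssim k^{\frac{2s+d-6+\epsilon}4+2\left(s-\frac32\right)+\frac{d-7}4+\frac s2}=k^{3s+\frac d2-\frac{25}4+\frac\epsilon4}\quad\mathbb P\text{-}a.s.,
\]
uniformly in $x\in U$, with the implied constant finite almost surely (it depends on the realization only through an a.s.-finite negative-order Sobolev norm of $\rho$).

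Inserting this bound and using $\kappa_{\rm r}\sim k^{1/2}$ for large $k$ from \eqref{eq:kappari}, I would obtain
\[
\kappa_{\rm r}^{m+14-2d}\,|b(x,k)|^2\lesssim k^{\frac{m+14-2d}2}\,k^{6s+d-\frac{25}2+\frac\epsilon2}=k^{\frac m2+6s-\frac{11}2+\frac\epsilon2}\quad\mathbb P\text{-}a.s.
\]
The theorem then follows once $s$ and $\epsilon$ are chosen to make this exponent strictly negative, i.e. $s<\frac{11-m-\epsilon}{12}$. It remains to see that such an $s$ can be taken inside the admissible window $(\frac{d-m}2,\frac32)$ required by the operator bounds of Lemmas \ref{lm:operatorH} and \ref{lm:operatorK}. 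Since $\frac{11-m}{12}<1<\frac32$, only the left endpoint matters, and $\frac{d-m}2<\frac{11-m}{12}$ is equivalent to $5m>6d-11$, which holds throughout the standing range $m\in(d-1,d]$ (indeed $m>1$ suffices for $d=2$ and $m>2$ for $d=3$). Hence, fixing $\epsilon>0$ small and then $s\in(\frac{d-m}2,\frac{11-m-\epsilon}{12})$ yields a negative power of $k$, so $\kappa_{\rm r}^{m+14-2d}|b(x,k)|^2\to0$ as $k\to\infty$, $\mathbb P$-almost surely and uniformly in $x\in U$.

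The whole argument is exponent bookkeeping sitting on top of Lemma \ref{lm:Born}, so there is no serious obstacle; the only delicate point is the compatibility check $5m>6d-11$, which ensures the power of $k$ can be pushed below zero without leaving the range of $s$ where the $\mathbb P$-a.s. estimates $\|\mathcal K_k\|_{\mathcal L(H^s(U))}\lesssim k^{s-3/2}$, $\|\mathcal K_k\|_{\mathcal L(H^s(U),L^\infty(U))}\lesssim k^{(2s+d-6+\epsilon)/4}$, and $\|\mathcal H_k\|_{\mathcal L(H^{-s}(D),H^s(U))}\lesssim k^{s-3/2}$ are available. Because these estimates are probabilistic only through an a.s.-finite Sobolev norm of $\rho$, the convergence is almost sure.
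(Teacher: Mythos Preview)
Your proof is correct and follows essentially the same approach as the paper's: both invoke the tail bound \eqref{eq:resi} with $N=2$ to get $|b(x,k)|\lesssim k^{3s+\frac d2-\frac{25}4+\frac\epsilon4}$, and both reduce the conclusion to the arithmetic compatibility $5m>6d-11$, which holds throughout $m\in(d-1,d]$. The only cosmetic difference is that the paper fixes $s=\frac{d-m}2+\epsilon$ and computes the resulting exponent $6d-5m-11+13\epsilon$ in the variable $\kappa_{\rm r}$, whereas you leave $s$ free in $(\frac{d-m}2,\frac{11-m-\epsilon}{12})$ and track the exponent in $k$; the content is identical.
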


\begin{proof}
Following the similar estimate in \eqref{eq:resi} with $N=2$, we have 
\begin{align*}
\|b(\cdot,k)\|_{L^\infty(U)}\le\sum_{n=3}^\infty\|\mathcal K_k^{n}u_0(\cdot,k)\|_{L^\infty(U)}\lesssim k^{3s+\frac d2-\frac{25}4+\frac{\epsilon}4}\lesssim\kappa_{\rm r}^{6s+d-\frac{25}2+\frac{\epsilon}2}\quad\mathbb P\text{-}a.s.
\end{align*}
for any $s\in(\frac{d-m}2,\frac32)$, $\kappa_{\rm r}\ge C_{k_0}$ and $\epsilon>0$, where $C_{k_0}=\Re[\kappa(k_0)]$ is the a constant depending on $k_0$ given in Lemma \ref{lm:Born}. Hence, we obtain by choosing $s=\frac{d-m}2+\epsilon$ that 
\begin{align*}
\kappa_{\rm r}^{m+14-2d}|b(x,k)|^2
\lesssim\kappa_{\rm r}^{6d-5m-11+13\epsilon}\to0\quad\mathbb P\text{-}a.s.
\end{align*}
as $k\to\infty$ under the condition $m\in(d-1,d]$, which completes the proof.
\end{proof}

\subsection{The proof of Theorem \ref{tm:main}}

Considering the Born series of the scattered field
\[
u^s(x,k)=u_1(x,k)+u_2(x,k)+b(x,k)
\]
for $k\ge k_0$ with $k_0$ being given in Lemma \ref{lm:Born}, we obtain  
\begin{align*}
&\frac1K\int_K^{2K}\kappa_{\rm r}^{m+14-2d}\mathbb E|u^s(x,k)|^2d\kappa_{\rm r}\\
&=\frac1K\int_K^{2K}\kappa_{\rm r}^{m+14-2d}\mathbb E|u_1(x,k)|^2d\kappa_{\rm r}
+\frac1K\int_K^{2K}\kappa_{\rm r}^{m+14-2d}\mathbb E|u_2(x,k)|^2d\kappa_{\rm r}\\
&\quad +\frac1K\int_K^{2K}\kappa_{\rm r}^{m+14-2d}\mathbb E|b(x,k)|^2d\kappa_{\rm r}
+2\Re\left[\frac1K\int_K^{2K}\kappa_{\rm r}^{m+14-2d}\mathbb E\big[u_1(x,k)\overline{u_2(x,k)}\big]d\kappa_{\rm r}\right]\\
&\quad +2\Re\left[\frac1K\int_K^{2K}\kappa_{\rm r}^{m+14-2d}\mathbb E\big[u_1(x,k)\overline{b(x,k)}\big]d\kappa_{\rm r}\right]
+2\Re\left[\frac1K\int_K^{2K}\kappa_{\rm r}^{m+14-2d}\mathbb E\big[u_2(x,k)\overline{b(x,k)}\big]d\kappa_{\rm r}\right]\\
&=: \mathcal I_1+\mathcal I_2+\mathcal I_3+\mathcal I_4+\mathcal I_5+\mathcal I_6,
\end{align*}
where $\mathcal I_4\lesssim\mathcal I_1^{\frac12}\mathcal I_2^{\frac12}$, 
$\mathcal I_5\lesssim\mathcal I_1^{\frac12}\mathcal I_3^{\frac12}$, and $\mathcal I_6\lesssim\mathcal I_2^{\frac12}\mathcal I_3^{\frac12}$.

According to Theorems \ref{tm:u1}, \ref{tm:u2}, and \ref{tm:b}, it is clear to note 
\begin{align*}
\lim_{K\to\infty}\mathcal I_1=T_d(x),\quad
\lim_{K\to\infty}\mathcal I_j=0,\quad j=2,3,
\end{align*}
which lead to
\[
\lim_{K\to\infty}\frac1K\int_K^{2K}\kappa_{\rm r}^{m+14-2d}\mathbb E|u^s(x,k)|^2d\kappa_{\rm r}=T_d(x).
\]
Then \eqref{eq:main} is deduced by utilizing the equivalence between the following limits:
\begin{align*}
&\lim_{K\to\infty}\frac1{K}\int_{K}^{2K}\kappa_{\rm r}^{m+14-2d}\mathbb E|u^s(x,k)|^2d\kappa_{\rm r}\\
&=2\lim_{K\to\infty}\left[\frac1{2K}\int_1^{2K}\kappa_{\rm r}^{m+14-2d}\mathbb E|u^s(x,k)|^2d\kappa_{\rm r}-\frac1{2K}\int_1^K\kappa_{\rm r}^{m+14-2d}\mathbb E|u^s(x,k)|^2d\kappa_{\rm r}\right]\\
&=\lim_{K\to\infty}\frac1{K}\int_1^{K}\kappa_{\rm r}^{m+14-2d}\mathbb E|u^s(x,k)|^2d\kappa_{\rm r}.
\end{align*}

If $\sigma=0$, then $\kappa=\kappa_{\rm r}=k^{\frac12}$. The expectation in the above estimates can be removed due to Theorem \ref{tm:u1}. We then get
\begin{align*}
T_d(x)&=\lim_{K\to\infty}\frac1{K}\int_1^{K}\kappa^{m+14-2d}|u^s(x,k)|^2d\kappa\\
&=\lim_{K\to\infty}\frac1{K}\int_1^{K^2}k^{\frac{m+14-2d}2}|u^s(x,k)|^2\frac12k^{-\frac12}dk\\
&=\lim_{K\to\infty}\frac1{2K}\int_1^{K^2}k^{\frac{m+13}2-d}|u^s(x,k)|^2dk\quad\mathbb P\text{-}a.s.,
\end{align*}
which completes the proof of \eqref{eq:main2}.

The uniqueness can be proved by following the same argument in \cite[Theorem 1]{LPS08} or \cite[Theorem 4.4]{LW}. 

\section{Conclusion}\label{sec:con}

In this paper, we have studied the random potential scattering for biharmonic waves in lossy media. The unique continuation principle is proved for the biharmonic wave equation with rough potentials. Based on the equivalent Lippmann--Schwinger integral equation, the well-posedness is established for the direct scattering problem in the distribution sense. The uniqueness is attained for the inverse scattering problem. Particularly, we show that the correlation strength of the random potential is uniquely determined by the high frequency limit of the second moment of the scattered wave field averaged over the frequency band. Moreover, we demonstrate that the expectation can be removed and the data of only a single realization is needed almost surely to ensure the uniqueness of the inverse problem when the medium is lossless.

Finally, we point out some important future directions along the line of this research. In this work, the convergence of the Born series is crucial for the inverse problem. However, this approach is not applicable to the inverse random medium scattering problems, since the Born series for the medium scattering problem does not converge any more in the high frequency regime. It is unclear whether the correlation strength of the random medium can be uniquely determined by some statistics of the wave field. Other interesting problems include the inverse random source or potential problems for the wave equations with higher order differential operators, such as the stochastic polyharmonic wave equation.


\begin{thebibliography}{}

\bibitem{AF03}
R. Adams and J. Fournier, Sobolev Spaces, 2nd ed., Pure and Applied Mathematics 140, Elsevier/Academic Press, Amsterdam, 2003.

\bibitem{AS92} 
M. Abramowitz and I. A. Stegun, editors, Handbook of Mathematical Functions with Formulas, Graphs, and Mathematical Tables, Dover Publications, Inc., New York, 1992. 

\bibitem{BL76}
J. Bergh and J. L\"ofstr\"om, Interpolation Spaces An introduction, Grundlehren der Mathematischen Wissenschaften 223, Springer-Verlag, Berlin-New York, 1976.

\bibitem{B55}
L. Bers, Local behavior of solutions of general linear elliptic equations, Comm. Pure Appl. Math., 8 (1955), 473--496.

\bibitem{CHL19}
P. Caro, T. Helin, and M. Lassas, Inverse scattering for a random potential, Anal. Appl. (Singap.),
17 (2019), 513--567.

\bibitem{CL05}
Z. Chen and X. Liu, An adaptive perfectly matched layer technique for time-harmonic scattering
problems, SIAM J. Numer. Anal., 43 (2005), 645--671.

\bibitem{CK13}
D. Colton and R. Kress, Inverse Acoustic and Electromagnetic Scattering Theory, 3rd edition, Springer, New York, 2013. 

\bibitem{FGE09}
M. Farhat, S. Guenneau, and S. Enoch, Ultrabroadband elastic cloaking in thin plates, Phys. Rev. Lett.,
103 (2009), 024301.

\bibitem{GGW10}
F. Gazzola, H.-C. Grunau, and G. Sweers, Polyharmonic Boundary Value Problems: Positivity Preserving and Nonlinear Higher Order Elliptic Equations in Bounded Domains, Lecture Notes in Mathematics 1991, Springer-Verlag, Berlin, 2010.

\bibitem{I88}
K. Iwasaki, Scattering theory for 4th order differential operators: I, II, Japan. J. Math., 14 (1988), 1--57, 59--96. 

\bibitem{JK85} 
D. Jerison and C. E. Kenig, Unique continuation and absence of positive eigenvalues for Schr\"odinger operators, Ann. Math., 121 (1985), 463--494. 

\bibitem{KRS87}
C. E. Kenig, A. Ruiz, and C. D. Sogge, Uniform Sobolev inequalities and unique continuation for second order constant coefficient differential operators, Duke Math. J., 55 (1987), 329--347. 

\bibitem{KLU12}
K. Krupchyk, M. Lassas, and G. Uhlmann, Determining a first order perturbation of the biharmonic operator by partial boundary measurements, J. Funct. Anal., 262 (2012), 1781--1801. 

\bibitem{KLU14}
K. Krupchyk, M. Lassas, and G. Uhlmann, Inverse boundary value problems for the perturbed polyharmonic operator, Trans. Amer. Math. Soc., 366 (2014),  95--112. 

\bibitem{LPS08}
M. Lassas, L. P\"aiv\"arinta, and E. Saksman, Inverse scattering problem for a two dimensional random potential, Comm. Math. Phys., 279 (2008), 669--703.

\bibitem{LLW}
J. Li, P. Li, and X. Wang, Inverse elastic scattering for a random potential, SIAM J. Math. Anal., 54 (2022), 5126--5159.

\bibitem{LLW2}
J. Li, P. Li, and X. Wang, Inverse random potential scattering for elastic waves, Multiscale Model. Simul., to appear. 

\bibitem{LLM21}
J. Li, H. Liu, and S. Ma, Determining a random Schr\"odinger operator: both potential and source are random, Comm. Math. Phys., 381 (2021), 527--556.

\bibitem{LW21}
P. Li and X. Wang, Regularity of distributional solutions to stochastic acoustic and elastic scattering problems, J. Differential Equations, 285 (2021), 640--662.

\bibitem{LW} 
P. Li and X. Wang, Inverse random source scattering for the Helmholtz equation with attenuation, SIAM J. Appl. Math., 81 (2021), 485--506.

\bibitem{LW2} P. Li and X. Wang, An inverse random source problem for the biharmonic wave equation, SIAM/ASA J. Uncertain. Quantif., 10 (2022), 949--974. 

\bibitem{MMM09}
R. C. McPhedran, A. B. Movchan, and N. V. Movchan, Platonic crystals: Bloch bands, neutrality and defects, Mech. Mater., 41 (2009), 356--363.

\bibitem{N00}
J.-C. N\'{e}d\'{e}lec, Acoustic and Electromagnetic Equations: Integral Representations for
Harmonic Problems, Springer, New York, 2000. 

\bibitem{P04}
L. P\"aiv\"arinta, Analytic methods for inverse scattering theory, in New Analytic and Geometric
Methods in Inverse Problems, K. Bingham, Y. Kurylev, and E. Somersalo, eds., Springer, New York, 2003, 165--185.

\bibitem{P58}
R. N. Pederson, On the unique continuation theorem for certain second and fourth order elliptic equations, Comm. Pure Appl. Math., 11 (1958), 67--80.

\bibitem{S00}
A. P. S. Selvadurai, Partial Eifferential Equations in Mechanics 2. The Biharmonic Equation, Poisson’s Equation, Springer-Verlag, Berlin, 2000.

\bibitem{SWW12}
M. Stenger, M. Wilhelm, and M. Wegener, Experiments on elastic cloaking in thin plates, Phys. Rev. Lett., 108 (2012), 014301. 

\bibitem{TH17}
T. Tyni and M. Harju, Inverse backscattering problem for perturbations of biharmonic operator, Inverse Problems, 33 (2017), 105002.

\bibitem{TS18}
T. Tyni and V. Serov, Scattering problems for perturbations of the multidimensional biharmonic operator, Inverse Probl. Imaging, 12 (2018), 205--227.

\bibitem{WUW2004}
E. Watanabe, T. Utsunomiya, and C. Wang, Hydroelastic analysis of pontoon-type VLFS: a literature survey, Eng. Struct., 26 (2004), 245--256.

\bibitem{Y14}
Y. Yang, Determining the first order perturbation of a bi-harmonic operator on bounded and unbounded domains from partial data, J. Differential Equations, 257 (2014), 3607--3639. 

\end{thebibliography}
\end{document}